\definecolor{aliceblue}{rgb}{0.9, 0.95, 1.0}
\newcommand\Z{{\mathbb Z}}
\newcommand{\C}{{\mathbb C}}
\newcommand{\pslr}{{\mathrm{PSL}_2 (\mathbb{R})}}
\newcommand{\pslc}{{\mathrm{PSL}_2 (\mathbb{C})}}
\newcommand*{\bigchi}{\mbox{\large$\chi$}}
\newtheorem{theorem}{Theorem}[section]
\newtheorem{prop}[theorem]{Proposition}
\newtheorem{cor}[theorem]{Corollary}
\newtheorem{thm}{Theorem}[section]
\newtheorem{lem}[theorem]{Lemma}
\newtheorem{defn}[theorem]{Definition}
\newcommand\CC{{\mathcal C}}
\newcommand\DD{{\mathcal D}}
\newcommand\FF{{\mathcal F}}
\newcommand\GG{{\mathcal G}}
\newcommand\LL{{\mathcal L}}
\newcommand\MM{{\mathcal M}}
\newcommand\PP{{\mathcal P}}
\newcommand\TT{{\mathcal T}}
\newcommand{\cp}{\mathbb{C}\mathrm{P}^1}
\newcommand\PMF{{\PP\kern-2pt\MM\FF}}
\newcommand\PML{{\PP\kern-2pt\MM\LL}}
\newcommand\D{{\mathbb D}}
\newcommand\R{{\mathbb R}}
\newcommand\til{\widetilde}
\title[Meromorphic projective structures, grafting and the monodromy map]{Meromorphic projective structures,\\grafting and the monodromy map}
\author{Subhojoy Gupta}
\address{Department of Mathematics, Indian Institute of Science,
Bangalore 560012, India}
\email{subhojoy@iisc.ac.in}
\author{Mahan Mj}
\address{School of Mathematics, Tata Institute of Fundamental Research, Homi 
Bhabha Road, Mumbai 400005, India}
\email{mahan@math.tifr.res.in}
\begin{document}

\begin{abstract} A meromorphic projective structure on a punctured Riemann surface $X\setminus P$  is determined, after fixing a standard projective structure on $X$,  by a meromorphic  quadratic differential with poles of order three or more at each puncture in $P$.  In this article we prove the analogue of Thurston's grafting theorem for such meromorphic projective structures, that involves grafting crowned hyperbolic surfaces.  
This also provides a grafting description for projective structures on $\mathbb{C}$ that have polynomial Schwarzian derivatives.  As an application of our main result, we prove the analogue of a result of Hejhal,  namely,  we show that the monodromy map to the decorated character variety (in the sense of Fock-Goncharov) is a local homeomorphism.  
\end{abstract}

\maketitle

\tableofcontents
\section{Introduction}

 Let $S$ be a closed oriented surface of genus $g\geq 2$. A \textit{marked complex projective structure} on $S$ is a geometric structure modeled on $\cp$, that is, it comprises an atlas of charts to $\cp$ with transition maps that are restrictions of elements of $\pslc$. Passing to the universal cover, this yields a \textit{developing map} $f:\widetilde{S} \to \cp$ that is $\rho$-equivariant where $\rho:\pi_1(S) \to \pslc$ is the \textit{holonomy} of the projective structure.

 Complex-analytically, a projective structure on $S$  is obtained by fixing a reference projective structure, and solving the Schwarzian equation
 \begin{equation}\label{schw}
 u^{\prime\prime} + \frac{1}{2} q u = 0 
 \end{equation}
on $\tilde{S}$, where $q$ is the lift of a quadratic differential on $S$ that is holomorphic with respect to a choice of complex structure.  In particular, the developing map is obtained as the ratio of a pair of linearly independent solutions, and the holonomy homomorphism records the monodromy of the solutions around homotopically non-trivial loops on the surface. 

Conversely, given a projective structure, the Schwarzian derivative of the developing map yields a quadratic differential  on $\widetilde{S} \cong \mathbb{D}$ that is invariant under the Fuchsian group  $\Gamma$ determined by the choice of complex structure on $S$; this gives back the holomorphic quadratic differential $q$ on the quotient surface. (See Proposition \ref{projq}.) 

The space of marked projective structures $\mathcal{P}_g$  then forms a bundle over  Teichm\"{u}ller space $\mathcal{T}_g$ that is affine with respect to the vector bundle $\mathcal{Q}_g$ of quadratic differentials. 

A more geometric description of a projective structure was provided by Thurston, who showed that one can obtain projective structures by starting with a hyperbolic surface (a \textit{Fuchsian} projective structure), and grafting along a measured geodesic lamination. Indeed, the resulting \textit{grafting map}
\begin{equation}\label{graft}
Gr: \mathcal{T}_g \times \mathcal{ML} \to \mathcal{P}_g
\end{equation}
is then a homeomorphism. See \S2.4 for references, and a sketch of the proof.

Recently, Allegretti and Bridgeland  \cite{AllBrid} introduced the  space of \textit{meromorphic} projective structures where the quadratic differential (in Equation  \eqref{schw}) is allowed to have higher order poles. Such meromorphic projective structures can be thought of as arising from certain degenerations of projective structures in $\mathcal{P}_g$; indeed, meromorphic quadratic differentials naturally arise in a compactification of the bundle $\mathcal{Q}_g$ (see for example \cite{BCGGM2}).  Our aim in this article is to extend Thurston's geometric description to include such structures, and also provide parametrizations of the corresponding new spaces that we need to define (see Equation \eqref{graft2}).

If there are $k\geq 1$ poles of orders given by the $k$-tuple $\mathfrak{n} = (n_1,n_2, \ldots , n_k)$ where each $n_i$ is an integer greater than or equal to 3, then we denote the corresponding space of marked meromorphic projective structures by $\mathcal{P}_g(\mathfrak{n})$. 
Here, the marking records a real ``twist" parameter at each pole (see \S3.1 for details).

The replacement of Fuchsian structures on closed surfaces (in Thurston's description) are hyperbolic surfaces with ``crown ends",  where each crown end comprises a collection of bi-infinite geodesics enclosing \textit{boundary cusps}. For any fixed tuple of integers   $\mathfrak{n}$  as above, let $\TT_g(\mathfrak{n})$ be the space of marked hyperbolic surfaces of genus $g$ and $k$ crowns, with their respective numbers  of boundary cusps given by $(n_i-2)$ for $1\leq i\leq k$. Once again, the marking not only provides a labeling of the crown ends, and the boundary cusps of each, but also ``twist" data for each crown end. It can be shown that  $\TT_g(\mathfrak{n}) \cong \mathbb{R}^\chi$ where $\chi =6g-6 + \sum\limits_{i=1}^k (n_i+1)$ (see \cite{GupWild}). 

A measured lamination on a crowned hyperbolic surface could have weighted geodesic arcs going out towards a boundary cusp, in addition to  components that are compactly supported, and we shall always include the geodesic sides of each crown end, each of  infinite weight. The space of such measured laminations ${ML}_g(\mathfrak{n})$ is also homeomorphic to $\mathbb{R}^\chi$  -- see Theorem \ref{mln} in \S3.4, which relies on a combinatorial argument that we defer to the Appendix.

Our main result is:

\begin{thm}[Meromorphic grafting theorem]\label{thm1} 
Fix integers $g\geq 0$, $k\geq 1$ such that $2g+k>2$ and a $k$-tuple  $\mathfrak{n} = (n_1,n_2, \ldots , n_k)$ where each $n_i \geq 3$. Any meromorphic projective structure $P \in \mathcal{P}_g(\mathfrak{n})$ can be obtained by starting with a crowned hyperbolic surface $\hat{X} \in  \TT_g(\mathfrak{n})$, and grafting along a measured geodesic lamination $\lambda \in {ML}_g(\mathfrak{n})$. 

This construction is uniquely determined by the projective surface $P$. Moreover,  the grafting map
\begin{equation}\label{graft2}
\widehat{Gr}: \TT_g(\mathfrak{n})  \times {ML}_g(\mathfrak{n}) \to  \mathcal{P}_g(\mathfrak{n})
\end{equation}
is a homeomorphism.
\end{thm}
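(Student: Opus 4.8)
The plan is to mimic the structure of the proof of Thurston's classical grafting theorem (Equation \eqref{graft}), but carried out in the presence of crown ends and poles. The argument naturally splits into three parts: (i) constructing a grafting map $\widehat{Gr}$ from the geometric data, i.e.\ showing that grafting a crowned hyperbolic surface along a lamination in ${ML}_g(\mathfrak{n})$ does yield a meromorphic projective structure in $\mathcal{P}_g(\mathfrak{n})$, with the right pole orders and twist parameters; (ii) constructing an inverse map, i.e.\ showing that every $P \in \mathcal{P}_g(\mathfrak{n})$ admits a \emph{unique} decomposition as such a grafting, by producing from $P$ a canonical stratification into a ``hyperbolic part'' and a ``grafted (Euclidean/projective) part''; and (iii) upgrading the resulting bijection to a homeomorphism.

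For part (i), the key point is a local model computation near each pole: one checks that grafting a crown end with $n_i-2$ boundary cusps along the bi-infinite geodesic sides (each of infinite weight), together with whatever finite-weight arcs of $\lambda$ run out to the boundary cusps, produces exactly a pole of order $n_i$ in the Schwarzian, and that the ``twist'' coordinate on $\TT_g(\mathfrak{n})$ together with the transverse measure data of $\lambda$ account precisely for the real twist parameter recorded in the marking of $\mathcal{P}_g(\mathfrak{n})$. This is where one uses the parametrizations $\TT_g(\mathfrak{n}) \cong \mathbb{R}^\chi$ (from \cite{GupWild}) and ${ML}_g(\mathfrak{n}) \cong \mathbb{R}^\chi$ (Theorem \ref{mln}): the source of $\widehat{Gr}$ has the same dimension $2\chi$ as $\mathcal{P}_g(\mathfrak{n})$, which one should also verify is a manifold of that dimension (a bundle over $\mathcal{T}_g(\mathfrak{n})$ affinely modeled on meromorphic quadratic differentials with the prescribed poles, with the twist markings).

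For part (ii), which I expect to be \textbf{the main obstacle}, the strategy is the Thurston--Kulkarni--Pinkall--Kamishima--Tan approach: given a projective surface $P$, take the developing map $f:\widetilde{S'}\to\cp$ (where $S' = X\setminus P$) and form the \emph{canonical stratification} of $P$ coming from maximal round disks / the convex core / the collapsed image, dividing the surface into a region where $P$ looks ``hyperbolic'' and a region foliated by round circles where it looks like a graft along a lamination. The new difficulty relative to the closed case is behavior near the punctures: one must show that the model meromorphic projective structure near a pole of order $n$ has a canonical collapsed picture that is exactly a crown end with $n-2$ boundary cusps, grafted along its (infinite-weight) crown sides. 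This requires a careful analysis of the developing map near a higher-order pole --- the pole of order $\geq 3$ forces the developing map to have a prescribed asymptotic ``spiralling/exponential'' behaviour that one can match with the geometry of a grafted crown. Controlling this asymptotic behaviour uniformly, and checking that the collapsing map extends continuously to the boundary cusps, is the technical heart. Uniqueness then follows from uniqueness of the Thurston stratification (the collapsed hyperbolic surface and the bending lamination are intrinsic to $P$).

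For part (iii), once $\widehat{Gr}$ and its inverse are defined set-theoretically, continuity of $\widehat{Gr}$ is checked by working in the affine coordinates on $\mathcal{P}_g(\mathfrak{n})$ (the Schwarzian/quadratic-differential coordinates): one shows that the quadratic differential of the grafted surface depends continuously on the hyperbolic surface $\hat X$ and the lamination $\lambda$, using the same kind of estimates (e.g.\ the Thurston metric / projective metric bounds, and the continuity of the grafting construction on compact pieces plus the explicit local models near poles) that appear in the closed case. Continuity of the inverse is most cleanly obtained by invariance of domain: $\widehat{Gr}$ is a continuous injection between (connected, oriented) manifolds of the same dimension $2\chi$, so it is open, and one argues it is also proper (a divergent sequence of geometric data produces a divergent sequence of projective structures — controlled again via the local models near the poles), hence surjective onto a component and a homeomorphism onto $\mathcal{P}_g(\mathfrak{n})$. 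Alternatively, one may sidestep invariance of domain by exhibiting the inverse directly from the canonical stratification and checking its continuity, which has the advantage of also re-proving surjectivity; I would present whichever is cleaner after part (ii) is in hand.
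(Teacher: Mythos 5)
Your plan follows essentially the same route as the paper: well-definedness of $\widehat{Gr}$ via a local analysis of the grafted crown end (the paper's Lemmas \ref{lem1-gr} and \ref{lem2-gr}, using a Nevanlinna-style parabolicity and rational-approximation argument), the inverse via the Thurston--Kulkarni--Pinkall--Kamishima--Tan construction combined with the asymptotics of the developing map near a higher-order pole (the paper's Proposition \ref{inv}), and the upgrade to a homeomorphism by invariance of domain. The one ingredient you describe but do not name is the concrete source of the ``prescribed exponential behaviour'' near the pole: the paper gets this from the classical theory of linear differential systems (Sibuya, Balser--Jurkat--Lutz), which yields the Stokes-sector asymptotics $f(\xi)\sim e^{-2\xi}$ of Corollary \ref{cor-dev} and hence the $(n-2)$ infinite-weight crown sides.
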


\noindent\textit{Remark.}  From the definitions of the spaces (see \S3, and the preceding discussion) together with Lemma \ref{dimen} and Theorem \ref{mln} it shall follow that the two sides are indeed homeomorphic to cells of the same dimension. \\


\smallskip


Note that Thurston's  construction of the \textit{inverse} map to  Equation \eqref{graft} can be carried out for the equivariant projective structure $\widetilde{P}$  on the universal cover of the surface; we give details of the procedure in \S2.5, following \cite{KamTan}, \cite{Tanig}, and \cite{Kulkarni-Pinkall}.  In particular, this yields \textit{some} measured lamination on the Poincar\'{e} disk, invariant under \textit{some} Fuchsian group, grafting along which yields $\widetilde{P}$ (see Theorem \ref{thu-con}). Theorem \ref{thm1} precisely determines the geometry of the hyperbolic surface and measured laminations we obtain in the quotient, when we start with a {meromorphic} projective structure in the space $ \mathcal{P}_g(\mathfrak{n})$. The proof in \S4 shall crucially depend on the asymptotics of the developing map in the neighborhood of the poles, culled from classical work in the theory of linear differential systems.\\

In \S5,  we recall work of Sibuya (\cite{Sib-book}) concerning solutions to the Schwarzian equation for polynomial quadratic differentials on the complex plane.
  The proof of Theorem 1.1 also applies to this setting, and yields  the following description of the space of the corresponding projective structures on $\C$,  which could be of independent interest (see \S5.1):

 \begin{thm}\label{thm2} 
 	 For $d\geq 2$, let  $\mathcal{P}(d)$ be the space of meromorphic projective structures on $\C$ that correspond to  polynomial quadratic differentials of degree $d$.
 	Then there is a grafting parametrization
 		\begin{equation}\label{graft3}
 		\widehat{Gr}_{\C}: \text{Poly}(d)  \times \text{Diag}(d) \to  \mathcal{P}(d)
 		\end{equation}
 		where
 	\begin{itemize}
 		\item $\text{Poly}(d)$ is the space of hyperbolic ideal polygons with $(d+2)$ vertices, and 
 		\item $\text{Diag}(d)$ is the space of weighted diagonals on an ideal polygon with $(d+2)$ vertices, together with the geodesic sides of the polygon, each with infinite weight.
 	\end{itemize}

 \end{thm}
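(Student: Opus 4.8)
The plan is to run the proof of Theorem~\ref{thm1} in the genus-zero, one-pole case that its hypothesis $2g+k>2$ formally excludes. Two features make this easier. First, since $S=\C=\cp\setminus\{\infty\}$ is already simply connected, there is no deck group and no equivariance to keep track of. Second, a crowned hyperbolic surface of genus $0$ with a single crown and no closed-geodesic boundary is nothing but the interior of a finite-sided ideal polygon --- and the only leaves a measured lamination on such a surface can carry, besides the infinite-weight sides, are weighted diagonals joining two vertices, so the relevant lamination space is exactly $\text{Diag}(d)$. The one ingredient that must be supplied by hand is the behaviour of the developing map near the (single) pole; here, because the underlying quadratic differential $q=p(z)\,dz^2$ is polynomial of degree $d$, this is available in the explicit form given by Sibuya's theory of $u''+\tfrac12 p\,u=0$ (recalled in \S5), although the qualitative picture --- a pole of order $n=d+4$ with $n-2=d+2$ Stokes sectors --- is the same as in the proof of Theorem~\ref{thm1}.

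So first I would fix $P\in\mathcal{P}(d)$, choose a developing map $f:\C\to\cp$ (a ratio of two linearly independent solutions of the Schwarzian equation~\eqref{schw} with this $q$), and record the asymptotics of $f$ near $\infty$. As the single pole has order $n=d+4\ge 6$, the ``higher order pole'' analysis used for Theorem~\ref{thm1} applies: near $\infty$ the plane is divided by $n-2=d+2$ Stokes rays into sectors $\Sigma_1,\dots,\Sigma_{d+2}$; on each closed subsector of $\Sigma_i$ the map $f$ tends to an asymptotic value $v_i\in\cp$; consecutive values satisfy $v_i\neq v_{i+1}$, by the classical non-vanishing of adjacent Stokes multipliers; and crossing each Stokes ray the map $f$ follows the standard local model of a projective structure at a pole of order $n$. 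These are precisely the facts that feed into the argument of \S4, now for the lone pole at infinity.

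Next I would apply the Thurston--Kulkarni--Pinkall construction (\S2.5, Theorem~\ref{thu-con}) directly to $f:\C\to\cp$, with trivial deck group: this produces a bent (pleated) plane in $\Hyp^3$, hence equips $\C$ with a hyperbolic metric (its intrinsic metric) and a measured lamination $\lambda$ (its bending locus, into which the $d+2$ sides will enter with infinite weight exactly as the geodesic sides of a crown end do in Theorem~\ref{thm1}), such that grafting the resulting hyperbolic surface along $\lambda$ returns $P$. The crucial step --- and the one place where the Stokes analysis of the previous paragraph is used --- is to identify the metric completion of this hyperbolic surface with the interior of a hyperbolic ideal polygon $\Pi$ with exactly $d+2$ vertices: the maximal round disks of $f$ escaping into $\Sigma_i$ converge to a horoball based at $v_i$, giving the $i$-th ideal vertex; distinctness of consecutive $v_i$ yields $d+2$ genuine, cyclically ordered ideal vertices joined by complete geodesic sides; and no further ends or boundary components occur. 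Correspondingly $\lambda$ consists of a finite weighted system of diagonals of $\Pi$ --- the only leaves an ideal polygon admits --- together with its $d+2$ sides, each of infinite weight (reflecting the half-plane model of the structure along each Stokes ray). So $P\mapsto(\Pi,\lambda)$ is a well-defined map to $\text{Poly}(d)\times\text{Diag}(d)$ that inverts $\widehat{Gr}_{\C}$. This identification is where I expect the real work to be: lacking the cocompactness of the closed case, one must use Sibuya's precise asymptotic normal forms to control the family of maximal disks and the bending measure uniformly along paths escaping to $\infty$, both inside and between the Stokes sectors, so as to exclude extra boundary behaviour (further ideal vertices accumulating, or a positive-measure leaf of $\lambda$ beyond the $d+2$ sides) and to confirm that the completed surface is a genuine finite-sided ideal polygon rather than an infinite-type degeneration.

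Finally, to upgrade the bijection $\widehat{Gr}_{\C}$ of \eqref{graft3} to a homeomorphism I would argue as in \S4: $\widehat{Gr}_{\C}$ is continuous, the inverse constructed above is continuous, and in the reverse direction one verifies that grafting a pair $(\Pi,\lambda)\in\text{Poly}(d)\times\text{Diag}(d)$ does land in $\mathcal{P}(d)$ --- the grafted developing map near $\infty$ admits the explicit half-plane decomposition dictated by $\Pi$ and $\lambda$, from which its Schwarzian is seen to extend holomorphically over $\C$ with a pole of order exactly $d+4$ at $\infty$, i.e.\ to be a polynomial of degree $d$. Combined with the fact that $\text{Poly}(d)\times\text{Diag}(d)$ is a finite-dimensional cell --- the analogue of Theorem~\ref{mln} --- of the same dimension as $\mathcal{P}(d)$, by a count parallel to the Remark following Theorem~\ref{thm1}, this gives the asserted homeomorphism.
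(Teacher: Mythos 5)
Your proposal is correct and follows essentially the same route as the paper: well-definedness of $\widehat{Gr}_{\C}$ via the arguments of Lemmas \ref{lem1-gr} and \ref{lem2-gr} (equivalently Sibuya--Nevanlinna), the inverse via Theorem \ref{thu-con} and the Stokes-sector asymptotics of Corollary \ref{cor-dev} applied with trivial deck group to produce the ideal $(d+2)$-gon with infinite-weight sides and weighted diagonals, and invariance of domain to upgrade the bijection to a homeomorphism. The step you flag as ``the real work'' is exactly what the paper handles through Corollary \ref{cor-dev} and the exponential-map/half-logarithmic-end description in Proposition \ref{inv}.
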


 \medskip
 
 In \S5, we provide more detailed definitions of the spaces appearing in the above theorem.  It was known from the work of Sibuya and others (see Corollary \ref{cor-dev}) that the developing maps above will have $(d+2)$ asymptotic values, where $d$ is the degree of the polynomial. Moreover, Sibuya had observed that the corresponding \textit{crown-tip map} $\Psi$  from $\mathcal{P}(d)$ to the appropriate space of $(d+2)$-tuples of points in $\cp$ (see  Equation \eqref{psimap}) is not injective.
 As an application of Theorem \ref{thm2}, we provide a characterization of the fibers of $\Psi$, that is, the set of projective structures in $\mathcal{P}(d)$ that determine the same ordered tuple of asymptotic values (called `crown tips') -- see Theorem \ref{nonuniq} for the complete statement.  \\

 For closed surfaces, the grafting description for projective structures has been useful in the study of the monodromy (or holonomy) map 
 \begin{equation}\label{holm} 
 \text{hol}:  \mathcal{P}_g \to \bigchi_{g}
 \end{equation}
 from $\mathcal{P}_g$ to the $\pslc$-character variety of surface-group representations  (see, for example, \cite{Baba2} and \cite{BabaGup}).

Here, we define a monodromy map $\Phi$ (see  \eqref{mmap}) from the space of meromorphic projective structures $\mathcal{P}_g(\mathfrak{n})$ to the \textit{decorated character variety} ${\widehat{\bigchi}}_{g,k}(\mathfrak{n})$ that records, in addition to the $\pslc$-representation of the punctured surface, the additional data of the crown-tips at each pole. See \S6.1 for a definition, that follows that of the \textit{moduli stack of framed local systems}  of Fock-Goncharov in \cite{FG} (see also \S4 of \cite{AllBrid}). \\

As an application of our main result, Theorem \ref{thm1}, we shall prove (see \S6.2):

\begin{thm}\label{thm3} The monodromy map  $\Phi: \mathcal{P}_g(\mathfrak{n}) \to {\widehat{\bigchi}}_{g,k}(\mathfrak{n})$  is a local homeomorphism.
\end{thm}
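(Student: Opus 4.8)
The plan is to deduce the local homeomorphism statement from the grafting parametrization of Theorem \ref{thm1} together with the corresponding "grafting coordinates" on the decorated character variety. The classical argument for closed surfaces (following Hejhal, and in the modern treatment Earle, Hubbard, and also Baba) runs as follows: the holonomy map from $\mathcal{P}_g$ to the character variety is a local homeomorphism because $\mathcal{P}_g \to \mathcal{T}_g$ is a complex-analytic submersion with affine fibers, both spaces are complex manifolds of the same dimension $6g-6$, and the map is complex-analytic; the hard input is that it is a local \emph{immersion}, which one obtains either from the fact that a nonzero holomorphic quadratic differential gives a nonzero cohomology class (an infinitesimal computation), or — more geometrically — from the grafting picture. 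I would follow the latter route. First I would record that $\mathcal{P}_g(\mathfrak{n})$ and ${\widehat{\bigchi}}_{g,k}(\mathfrak{n})$ are both real-analytic (indeed, the target carries the Fock--Goncharov cluster structure) of the same dimension $\chi = 6g-6+\sum_i(n_i+1)$; on the source side this is the Remark following Theorem \ref{thm1}, and on the target side it follows from the dimension count for decorated $\pslc$-local systems with $k$ framed punctures of the given flag-data.

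Next, the key geometric step: I would show that the composite $\Phi \circ \widehat{Gr}$, as a map $\TT_g(\mathfrak{n}) \times {ML}_g(\mathfrak{n}) \to {\widehat{\bigchi}}_{g,k}(\mathfrak{n})$, is open and locally injective. Openness is the heart of it. Here I would mimic Thurston's / Scannell--Wolf's argument that grafting is a local homeomorphism onto its image at the level of holonomy: given a projective structure $P = \widehat{Gr}(\hat X, \lambda)$, small deformations of the holonomy representation (together with the decoration at the crown tips) can be realized by deforming the underlying crowned hyperbolic surface $\hat X$ and the lamination $\lambda$ — one uses that the developing map near each pole has the controlled asymptotics of Theorem \ref{thu-con} and the classical linear-ODE analysis cited in \S4, so that the $(n_i-2)$ crown tips and the twist parameters vary real-analytically and independently with the representation. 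Concretely, I would argue that the map $\Phi$ is a submersion by exhibiting, at each $P$, that the derivative is injective: a nontrivial first-order deformation of $P$ in $\mathcal{P}_g(\mathfrak{n})$ corresponds to a nonzero meromorphic quadratic differential (with the prescribed pole orders, and fixed twist = fixed leading pole coefficients), and such a differential pairs nontrivially against the relevant (relative) cohomology $H^1$ of the punctured surface with coefficients in the adjoint local system — this is the meromorphic analogue of the closed-surface infinitesimal rigidity statement, and it shows $d\Phi$ is injective. Since source and target have equal dimension, injectivity of $d\Phi$ forces $\Phi$ to be a local homeomorphism (real-analytic inverse function theorem, or invariance of domain).

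The step I expect to be the main obstacle is establishing that $d\Phi$ is injective near the punctures — equivalently, that no nonzero meromorphic quadratic differential with poles of order exactly $n_i$ and \emph{fixed} leading coefficients (the "twist" data) lies in the kernel of the period/cohomology pairing against $H^1(S\setminus P; \mathrm{Ad}\,\rho)$ relative to the decoration. One must be careful that the decorated character variety is the correct target: the framing/decoration precisely kills the ambiguity coming from the leading pole coefficient, so that the fibers of $\Phi$ over a fixed decorated representation correspond exactly to the fixed-twist slices, and the dimensions match. I would handle this by combining the Fock--Goncharov dimension formula for ${\widehat{\bigchi}}_{g,k}(\mathfrak{n})$ with the asymptotic normal form of solutions of \eqref{schw} at an order-$n_i$ pole (the Sibuya-type analysis recalled in \S5), which shows that moving the crown tips and twists is independent of moving the holonomy of the compact part; a partition-of-unity / Mayer--Vietoris decomposition of the surface into a compact piece and $k$ punctured-disk pieces then reduces the rank computation to the closed-surface case (handled as in Hejhal) plus $k$ local computations on punctured disks, each of which is a direct ODE estimate. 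Once openness and local injectivity are in hand, the theorem follows; and in fact, since Theorem \ref{thm1} already gives that $\widehat{Gr}$ is a homeomorphism, it suffices to prove that $\Phi \circ \widehat{Gr}$ is a local homeomorphism, which is what the above plan delivers.
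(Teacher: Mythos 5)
Your overall frame---reduce to showing that $\Phi \circ \widehat{Gr}$ is locally injective and then invoke equality of dimensions plus invariance of domain---is indeed how the paper concludes. But the central step is missing. You propose to prove injectivity of $d\Phi$ via a pairing of meromorphic quadratic differentials against a (relative) $H^1$ with coefficients in $\mathrm{Ad}\,\rho$, calling it ``the meromorphic analogue of the closed-surface infinitesimal rigidity statement,'' and you yourself flag this as ``the main obstacle.'' That obstacle is essentially the entire content of the theorem, and nothing in your sketch closes it: the claimed nondegeneracy of the pairing is asserted, not proved, and the ``direct ODE estimate'' on punctured disks is not formulated. In particular, you never say concretely how the decoration enters the injectivity argument, beyond the remark that it ``kills the ambiguity coming from the leading pole coefficient.'' This is where the paper does real work: it extends the arcs of the grafting lamination entering the crown ends to an ideal triangulation, invokes Fock--Goncharov's Theorem 1.1 to conclude that the complex cross-ratios of the framing flags along that triangulation are determined by the decorated monodromy, and then uses the explicit two-dimensional computation of grafting an ideal quadrilateral (from \S 5.2) to recover, from each complex cross-ratio, both the weight on the corresponding arc and the real cross-ratio of the straightened quadrilateral---hence the isometry type of the crown ends and the lamination data there. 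Without a substitute for this step, your argument does not recover the crown-end data from the decorated monodromy.

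For the compact part, the paper does not use cohomology either: it applies the Ehresmann--Thurston principle for manifolds with boundary (Theorem I.1.7.1 of \cite{CEG}) to the restriction of the projective structure to the complement of the crowns, and then the uniqueness in Theorem \ref{thu-con} to identify the hyperbolic pieces and the compactly supported laminations. Your Mayer--Vietoris reduction ``to the closed-surface case (handled as in Hejhal) plus $k$ local computations'' could plausibly be made to work along Allegretti--Bridgeland's analytic lines, but as written it presupposes both an identification of the tangent space of ${\widehat{\bigchi}}_{g,k}(\mathfrak{n})$ with a suitable relative cohomology and the nondegeneracy of the period pairing, neither of which is established. A further technical caveat for your differentiable formulation: $\mathcal{ML}_g(\mathfrak{n})$ is only a cell complex (piecewise-linearly parametrized), and $\widehat{Gr}$ is a homeomorphism, not a diffeomorphism, so arguments that differentiate through the grafting coordinates need care; the paper sidesteps this entirely by arguing topologically.
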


Note that it was shown in \cite{AllBrid} that this monodromy map is holomorphic, with respect to natural complex structures that these spaces acquire. Theorem \ref{thm3} thus implies that in fact $\Phi$ is a local biholomorphism. This proves the analogue of Hejhal's result for $\mathcal{P}_g$ (see \cite{Hejhal}, \cite{Earle}, \cite{Hubbard}) and confirms a conjecture of \cite{AllBrid} in our setting, where the order of each pole is greater than two.
Bakken's work in \cite{Bakk} proves a special case of Theorem \ref{thm3}, namely when $g=0$ and $k=1$, i.e.\  there is exactly one higher order pole.   The case when the order of each pole is \textit{at most} two was handled in \cite{Luo}.  \\

 Theorem \ref{thm3}  can  be thought of as an extension of the Ehresmann-Thurston principle, to our non-compact setting.  Indeed, our proof in \S6.2 shall use this principle in the usual context of compact manifolds, possibly with boundary  (see Theorem I.1.7.1 of \cite{CEG}). To be more specific, we shall apply this principle to  projective structures on the surface-with-boundary obtained by removing the crowns.  For the crown ends,  we shall exploit the fact that there are only finitely many leaves of the measured lamination entering them, that can be completed to a triangulation of the crowned surface.
 This shall allow us  to use a theorem of Fock-Goncharov (Theorem 1.1 of \cite{FG}) which implies, in our setting, that the weights on these leaves are uniquely determined by the decorated monodromy. \\

 In the case of a closed surface, the  image of the monodromy map (see Equation \ref{holm}) was characterized in \cite{GKM}. Their work can be thought of as the solution of the \textit{Riemann-Hilbert problem} for the Schwarzian equation on a closed Riemann surface.
In a sequel we shall address the analogous problem for punctured surfaces, where meromorphic projective structures will play a role.

\bigskip

\textbf{Acknowledgments.} SG thanks Kingshook Biswas, Shinpei Baba and Dylan Allegretti for illuminating conversations, and acknowledges the SERB, DST (Grant no. MT/2017/000706) and the Infosys Foundation for their support.  SG also thanks TIFR Mumbai for its hospitality; this project started at the Complex Analytic Geometry discussion meeting held there in 2018. 
We also thank the International Centre for Theoretical Sciences (ICTS) for their support and organizing the program on Surface group representations and Projective Structures (Code: ICTS/sgps/2018/12). MM is supported in part by  the Department of Atomic Energy, Government of India, under project no.12-R\&D-TFR-5.01-0500. MM is  also supported in part
by an endowment of the Infosys Foundation,
a DST JC Bose Fellowship, Matrics research project grant  MTR/2017/000005, CEFIPRA  project No. 5801-1 and by grant 346300 for IMPAN from the Simons Foundation and the matching 2015-2019 Polish MNiSW fund (code: BCSim-2019-s11). \\

\section{Background}

We recall basic facts on projective structures, and of the Thurston parametrization, that will play a crucial role in the rest of the paper. 
Throughout this section, $S_g$ would be a closed oriented surface of genus $g\geq 2$, whereas $S$ will denote a closed oriented surface, with possibly finitely many punctures. 

\subsection{Projective structures}
As mentioned in \S1,  a marked projective structure on $S$ is a maximal atlas of charts to $\cp$ such that the transition maps are restrictions of M\"{o}bius transformations.  We had also mentioned that an equivalent  definition is obtained by passing to the universal cover of the surface $\widetilde{S}$, where the local charts can be patched together to define a globally defined \textit{developing map}.
Thus, a  (marked) projective structure on $S$ consists of two pieces of data:
\begin{enumerate}
\item  a developing map $f:\widetilde{S} \to \cp$, and
\item a holonomy (or monodromy) homomorphism $\rho:\pi_1(S) \to \pslc$,
\end{enumerate} 
 such that $f$ is $\rho$-equivariant, with respect to the action of $\pi_1(S)$ by deck-transformations on the universal cover $\widetilde{S}$, and the action of the M\"{o}bius group $\rho(\pi_1(S))$  on $\cp$.

Two projective structures $(f,\rho)$ and $(g,\sigma)$ are said to be equivalent if the representations $\rho$ and $\sigma$ are   conjugate by some element  $A\in \pslc$, and the pair of maps $A\circ f, \, g$ are equivariantly homotopic to each other.

For a closed surface $S_g$, the space of equivalence classes  is then the space of marked projective structures, denoted $\PP_g$.

Since a projective structure on $S_g$ automatically also defines a complex structure on the underlying surface, there is a forgetful map $\pi:\PP_g \to \TT_g$, where $\TT_g$ is the Teichm\"{u}ller space of $S_g$.

An example of a projective structure is a \textit{Fuchsian} structure, where the developing map is injective with  image a hemisphere of $\cp$ (that can be identified with $\mathbb{D}$) and the holonomy representation $\rho$ is discrete, faithful with image in $\pslr$.  
Since any Riemann surface has such a uniformizing Fuchsian structure the fibers of the above projection map $\pi$ are never empty. 
In fact, it is well-known that the fibers are parametrized by holomorphic quadratic differentials (see, for example,  \S2 of \cite{Hubbard}):

\begin{prop}\label{projq} Let $X$ be a compact Riemann surface of genus $g\geq 2$. The space of marked projective structures on $X$ forms an affine space for the vector space $Q(X)$ of holomorphic quadratic differentials on $X$.
\end{prop}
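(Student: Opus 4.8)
\textbf{Proof proposal for Proposition \ref{projq}.}

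The plan is to produce the affine structure in two moves: first exhibit a reference point in the fiber over $X$, then identify the set of all projective structures inducing the complex structure $X$ with the torsor of quadratic differentials via the Schwarzian derivative. For the reference point, I would take the Fuchsian uniformizing structure: by the uniformization theorem there is a Fuchsian group $\Gamma \cong \pi_1(X)$ with $\mathbb{D}/\Gamma \cong X$, and the inclusion $\mathbb{D} \hookrightarrow \cp$ together with the representation $\Gamma \hookrightarrow \pslr \subset \pslc$ furnishes a developing pair; this shows the fiber is nonempty.

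Next, given any projective structure $(f,\rho)$ lying over $X$, I would pull back the complex coordinate via $f$ and lift the uniformization to $\til{X} \cong \mathbb{D}$, so that the developing map becomes a locally univalent holomorphic map $f:\mathbb{D} \to \cp$ that is $\rho$-equivariant for some $\rho:\Gamma \to \pslc$. The key computation is that the Schwarzian derivative $\SSS(f) = \left(\frac{f''}{f'}\right)' - \frac12\left(\frac{f''}{f'}\right)^2$, which is a holomorphic function on $\mathbb{D}$, transforms under the $\Gamma$-action by the cocycle rule $\SSS(f\circ\gamma) = (\SSS(f)\circ\gamma)\cdot(\gamma')^2$; since $f$ is $\rho$-equivariant and Möbius maps have vanishing Schwarzian, $\SSS(f)$ satisfies exactly this invariance and therefore descends to a holomorphic quadratic differential $q_{(f,\rho)} \in Q(X)$. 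Conversely, given $q \in Q(X)$, lift it to $\mathbb{D}$ and solve the linear ODE $u'' + \frac12 q u = 0$ (Equation \eqref{schw}); two linearly independent solutions have a ratio $f$ that is locally univalent with $\SSS(f) = q$, and the monodromy of the solution space around loops in $\Gamma$ defines $\rho$ up to the residual $\pslc$-ambiguity, giving a projective structure over $X$. One checks these two assignments are mutually inverse up to the equivalence relation defining $\PP_g$, using the classical fact that $\SSS(f) = \SSS(g)$ iff $g = A\circ f$ for some $A \in \pslc$.

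Finally, for the affine structure: fixing the Fuchsian reference point $(f_0,\rho_0)$ with $\SSS(f_0) = 0$, the map $(f,\rho) \mapsto \SSS(f)$ identifies the fiber over $X$ with $Q(X)$, and the action of $Q(X)$ on the fiber by $q \cdot (f,\rho) := $ (the structure with Schwarzian $\SSS(f) + q$) is free and transitive by the bijectivity just established; independence of the base point follows because changing the reference point shifts all Schwarzians by a fixed differential. The main obstacle is the analytic input in the reverse direction—ensuring the ODE solutions on the simply connected domain $\mathbb{D}$ have a locally univalent ratio and that the monodromy representation is well-defined; this is standard (the Wronskian is a nonzero constant, so $f' \neq 0$), but it is where the argument has content rather than being pure formalism. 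Everything else is the cocycle bookkeeping for the Schwarzian and the uniqueness statement $\SSS(f)=\SSS(g) \Leftrightarrow g = A \circ f$.
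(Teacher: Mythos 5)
Your proposal is correct and follows essentially the same route as the paper's sketch: the Schwarzian derivative of (transition/developing) maps produces the quadratic differential in one direction, and solving the linear ODE $u''+\tfrac12 qu=0$ and taking the ratio of solutions produces the new structure in the other. You simply carry out the computation on $\widetilde{X}\cong\mathbb{D}$ relative to the Fuchsian base point and spell out the equivariance and torsor bookkeeping that the paper leaves implicit.
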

\begin{proof}[Proof sketch.] 
The difference of two projective structures $C_1$ and $C_2$ is given by a holomorphic quadratic differential $q$, namely if $f:U \to \cp$ is the transition map between the two structures, then
\begin{equation}\label{schwd}
q = \left( \frac{f^{\prime\prime}}{f^\prime}\right)^\prime - \frac{1}{2} \left( \frac{f^{\prime\prime}}{f^\prime}\right)^2
\end{equation}
where the right hand side is the \textit{Schwarzian derivative} of $f$. 

Conversely, it is not hard to check that if $u_1$ and $u_2$ are  two linearly independent solutions of Equation \eqref{schw}, then the ratio $f:=u_1/u_2$ has Schwarzian derivative $q$.  Then, given a projective structure $C_1$, with developing map $f_1$, the new projective structure $C_2$ has a developing map given by $f\circ f_1$. 
\end{proof}

By the Riemann-Roch theorem, we know the dimension of $Q(X)$, and we immediately obtain: 

\begin{cor} The space $\PP_g$  of marked projective structures on $S_g$  is homeomorphic to $\mathbb{R} ^{12g-12}$.
\end{cor}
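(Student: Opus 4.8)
The plan is to combine Proposition~\ref{projq} with a dimension count from the Riemann--Roch theorem, and then observe that an affine space over a finite-dimensional real vector space is homeomorphic to a Euclidean space of that dimension. First I would recall that, by Proposition~\ref{projq}, for a fixed compact Riemann surface $X$ of genus $g\geq 2$ the fiber $\pi^{-1}(X)\subset\PP_g$ is an affine space modeled on the complex vector space $Q(X)$ of holomorphic quadratic differentials on $X$. By Riemann--Roch (equivalently, since $Q(X)=H^0(X,K_X^{\otimes 2})$ and $\deg K_X^{\otimes 2}=4g-4>2g-2$, so $H^1$ vanishes), one has $\dim_{\mathbb{C}} Q(X)=3g-3$, hence $\dim_{\mathbb{R}} Q(X)=6g-6$. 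Thus each fiber of $\pi:\PP_g\to\TT_g$ is homeomorphic to $\mathbb{R}^{6g-6}$.

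Next I would account for the base. Teichm\"uller space $\TT_g$ is homeomorphic to $\mathbb{R}^{6g-6}$ (a classical fact, which I would simply cite). The forgetful map $\pi:\PP_g\to\TT_g$ realizes $\PP_g$ as the total space of an affine bundle over $\TT_g$ modeled on the holomorphic vector bundle $\QQ_g$ of quadratic differentials, as stated in the introduction; since $\TT_g$ is contractible, this bundle is trivial (both as a vector bundle and as an affine bundle, an affine bundle over a contractible paracompact base admitting a section — indeed the Fuchsian/uniformizing section — and hence being isomorphic to its underlying vector bundle). Therefore $\PP_g\cong \TT_g\times\mathbb{R}^{6g-6}\cong\mathbb{R}^{6g-6}\times\mathbb{R}^{6g-6}\cong\mathbb{R}^{12g-12}$.

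The only point requiring a little care is the triviality of the bundle $\pi:\PP_g\to\TT_g$, i.e.\ producing a global homeomorphism rather than just a fiberwise one; this is the main (minor) obstacle. I would resolve it by noting that $\QQ_g\to\TT_g$ is a finite-rank vector bundle over a contractible space, hence trivial, and that $\PP_g\to\TT_g$ is a torsor over $\QQ_g$ with a global section given by the uniformizing Fuchsian structure on each Riemann surface (so the fibers are glued consistently); subtracting this section identifies $\PP_g$ with $\QQ_g$ as bundles over $\TT_g$. All other ingredients — Proposition~\ref{projq}, the Riemann--Roch dimension count, and the homeomorphism type of $\TT_g$ — are either proved above or standard.
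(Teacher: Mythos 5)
Your proposal is correct and follows essentially the same route as the paper, which deduces the corollary immediately from Proposition \ref{projq} together with the Riemann--Roch computation $\dim_{\mathbb{C}}Q(X)=3g-3$ and the fact that $\TT_g\cong\mathbb{R}^{6g-6}$. The extra care you take in trivializing the affine bundle via the Fuchsian section over the contractible base is a welcome (and correct) elaboration of the step the paper leaves implicit.
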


\textit{Remark.} In fact, $\PP_g$ is a complex manifold of dimension $6g-6$; see \cite{Hubbard}.

\subsection{Grafting}

Let $(f,\rho)$ be a Fuchsian projective structure $P_0$  on $S$. In what follows $\Gamma < \pslr$ shall be the Fuchsian group realized as the image of the holonomy map $\rho$. Note that the image of the developing map can be taken to be the upper hemisphere $U$ of $\cp$.  The operation of grafting deforms this to a different projective structure, as we shall now describe. 

Fix a  simple closed curve $a \in \pi_1(S)$, let $g : = \rho(a)$. 
Let $\alpha$ be an arc in $U$ preserved by the infinite-cyclic subgroup of $\pslr$  generated by $g\in \Gamma$.  Let $\Gamma \cdot \alpha$ be the collection of arcs stabilized by  conjugates of $\langle g\rangle$ under $\Gamma$. 

Then for a positive real parameter $t>0$, a $t$-grafting of $P_0$ along $\alpha$ is obtained by rotating one side of  each arc in $\Gamma \cdot \alpha$ relative to the other, by angle equal to $t$. A \textit{lune} is the resulting region between $\alpha$ and its rotated copy.  (See Figure 1.)

 Let $\Omega$ be the new domain on $\cp$ obtained from $U$ by the insertion of this $\Gamma$-invariant collection of lunes, one of angle $t$ at each translate $\gamma \cdot  \alpha$ where $\gamma \in \Gamma$. Then $\Omega$ is the developing image of a new projective structure $P$ on $S$.

Recall that   $\cp$ can be thought of as the boundary at infinity of hyperbolic $3$-space $\mathbb{H}^3$.  For any domain on $\cp$ invariant under a M\"{o}bius group, there is an invariant geometric object in the interior of $\mathbb{H}^3$, namely the boundary of the geodesic convex hull  (see \cite{thurstonnotes}, and our later discussion in \S2.4).  In particular, the boundary of the convex hull of the upper hemisphere $U$ is the equatorial plane, and that of the new domain $\Omega$ is a ``pleated" plane  that is bent along the geodesic axis $\gamma_\alpha$ joining the endpoints of $\alpha$, and its $\Gamma$-translates.  Here, a ``bending" is a relative rotation of one side of the geodesic axis $\gamma_\alpha$  by angle $t$  that corresponds to an elliptic element $E_{t\alpha}$ in $\pslc$. 

The deformation of $\rho$ to a new holonomy homomorphism $\rho^\prime:\pi_1(S) \to \pslc$ is best described in terms of a ``bending cocycle". We sketch the construction below -- for details, see  \S5.3 of \cite{Dum}, or II.3.5 of \cite{Epstein-Marden}.

To start, we ``straighten" the arcs $\Gamma \cdot \alpha$ to their geodesic representatives, namely consider the collection $\tilde{\gamma}$ of the geodesic axes of the hyperbolic element $g$ and its conjugates. 
The bending cocycle is then a map 
\begin{equation}\label{bcoc} 
\beta: \mathbb{H}^2 \setminus \tilde{\gamma} \times \mathbb{H}^2 \setminus \tilde{\gamma} \to \pslc
\end{equation}
where $\beta(x,y)$ defined as follows: consider the oriented geodesic arc $\sigma$ from $x$ to $y$, and let $g_1,g_2,\ldots g_n$ be the geodesics from $\tilde{\gamma}$ that intersect $\sigma$, in that order, each oriented so that $y$ lies to its right. Then $\beta(x,y) := E_1\circ E_2 \circ \cdots E_n$ where $E_i$ is the elliptic element that fixes the axis $g_i$  and rotates clockwise by an angle equal to $t$.  Note that if $\sigma \cap \tilde{\gamma} = \emptyset$, then we set $\beta(x,y) := Id$. 
If we fix a basepoint $x_0 \in \mathbb{H}^2 \setminus \tilde{\gamma}$, then the new representation $\rho^\prime$ is defined by:
\begin{equation}\label{rhop} 
\rho^\prime(c) = \beta(x_0 , c\cdot x_0) \circ \rho(c) 
\end{equation}
for any $c\in \pi_1(S)$.
Indeed, the domain $\Omega$ is invariant under the new M\"{o}bius group $\Gamma^\prime =\rho^\prime(\pi_1(S))$; the element $\rho^\prime(\gamma)$ (resp. its conjugates), acts by translations along the lune inserted at $\alpha$ (resp. its $\Gamma$-translates) and the new projective surface $\Omega/\Gamma^\prime$ is obtained by grafting a projective annulus  at $\gamma$ on the original hyperbolic surface $U/\Gamma$.  

\begin{figure}
	
  \centering
  \includegraphics[scale=0.4]{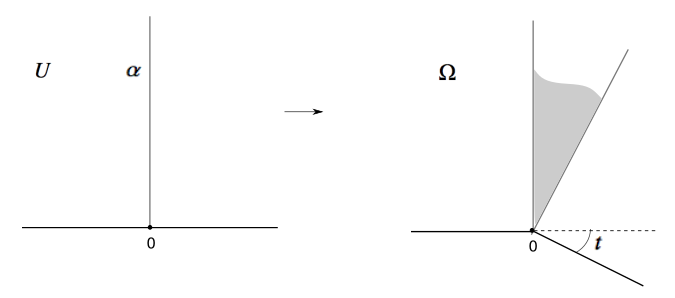}\\
  
  \caption{Grafting in a lune of angle $t$ at an arc $\alpha$.}
  \label{lune}
\end{figure}

\subsubsection*{Straight lunes}
 In the grafting construction the resulting projective structures are isotopic if the grafting arc $\alpha$ is changed by an isotopy; in particular, they remain unchanged in $\PP_g$. In particular, any lune can be isotoped to a \textit{straight lune} which is bounded by circular arcs in $\cp$, for example one obtained by grafting along a geodesic line $\alpha$.

\subsection{Measured laminations} 

Given a hyperbolic structure on $S$, a  \textit{geodesic lamination} is a closed subset that is foliated by disjoint, complete geodesics.
A collection of disjoint simple closed geodesics is certainly an example, but a geodesic  lamination could also have dense leaves, that is infinite geodesics which accumulate on to the entire lamination. (A lamination, all whose leaves are dense, is also called \textit{minimal}.)  
A geodesic lamination  is \textit{measured} if it is equipped with a transverse measure, that is, a positive measure on arcs transverse to the leaves, that is invariant under transverse homotopy.

Such a measured lamination can in fact be recovered from transverse measures of finitely many closed curves (which are also called their ``intersection numbers"). A measured lamination is thus a topological object that can be defined independent of a hyperbolic metric, as long as the surface has a marking.  The space $\mathcal{ML}_g$ of such measured laminations on $S_g$  is homeomorphic to $\mathbb{R}^{6g-6}$ (see \cite{FLP}), where the topology is induced by the transverse measures.

Note that if the hyperbolic structure is given by a Fuchsian group $\Gamma$, a geodesic lamination determines a closed set  $F \subset \GG :=\partial U \times \partial U \setminus \Delta$, where $U$ is the upper hemisphere of $\cp$, identified with $\mathbb{H}^2$, and $\Delta$ is the diagonal, and a transverse measure is a measure supported on this subset.  Any such measured lamination is then a limit of a sequence of weighted multicurves, which correspond to finite sums of Dirac measures converging in the weak-$\ast$ topology.

We can then define grafting of the Fuchsian structure along a measured lamination: a new domain $\Omega \subset \cp$ is obtained as a limit of the construction described in \S2.2,  where at each stage  we insert lunes corresponding to the weighted geodesics in the finite approximation of the lamination, mentioned above.  A similar limiting construction defines the bending cocycle Equation \eqref{bcoc} that determines the new holonomy representation $\rho^\prime$ exactly as in Equation \eqref{rhop}.  

Together, these define a new projective structure.

\subsection{Thurston parametrization} 
In the previous subsections, we have discussed how a Fuchsian structure $X$ can be grafted along a measured geodesic lamination $\lambda$ to define a new complex projective surface.  As mentioned in the Introduction, Thurston showed that this provides a unique construction of \textit{any} projective structure on a closed surface $S_g$ (see Equation \eqref{graft}). 

In this section, we discuss a statement that can be culled from the work of Kulkarni-Pinkall (\textit{c.f.} Theorem 10.6 of \cite{Kulkarni-Pinkall}) and Kamishima-Tan (\cite{KamTan}).  For a recent exposition, see \cite{BabaExp}. 
 As usual, a Riemann surface equipped with a complex projective structure will be called a  projective surface.
\begin{defn}
	A \text{maximal disk} on a projective surface $\tilde{X}$ is an embedded disk $U$ such that the restriction of $f$ to $U$ is a diffeomorphism onto a round disk  $f(U)$ in $\cp$; and  $U$ is not strictly contained in another disk with the same property.
\end{defn}

\begin{thm}\label{thu-con} 
Let $\tilde{X}$ be a simply-connected projective surface   that is not projectively isomorphic to $\C$, or the universal cover of $\cp \setminus \{0,\infty\}$. Then there exists a unique measured lamination $L$ on the Poincar\'{e} disk $\mathbb{D}$ such that  $\tilde{X}$ is obtained by grafting $\mathbb{D}$ along $L$. 

The map associating $L$ to $\tilde{X}$ is equivariant, i.e.\ if $\tilde{X}$ is the universal cover of a projective surface $S$, and the developing map $\tilde{X} \to \C P^1$ is $\pi_1(S)-$equivariant via
a representation  $\rho_\C: \pi_1(S)\to \pslc$, then $L$ is invariant under a naturally associated representation $\rho_\R: \pi_1(S)\to\pslr$. Moreover, the image $\Gamma$ of $\rho_\R$ is discrete, and the  quotient $\D/\Gamma$ is homeomorphic to $S$. 

Finally, the map $\tilde{X} \to L$ is continuous.
\end{thm}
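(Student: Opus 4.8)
The plan is to invert the grafting construction by means of the canonical decomposition of a projective surface into maximal round disks, due to Kulkarni--Pinkall (Theorem 10.6 of \cite{Kulkarni-Pinkall}; see also \cite{KamTan} and the exposition \cite{BabaExp}). First I would show that through every point $x\in\til X$ there passes a \emph{maximal disk}: the round disks containing $x$ are partially ordered by inclusion, the union of an increasing chain of embedded round disks on which the developing map $f$ is injective is again such a disk (a normal-families argument, using that the developing images converge), and Zorn's lemma supplies a maximal element $D$. The hypothesis that $\til X$ is not projectively $\C$ nor the universal cover of $\cp\setminus\{0,\infty\}$ is precisely what prevents this maximal element from being all of $\til X$ with $f$ injective onto $\C$ or onto $\C^\ast$; thus $f(D)$ is a proper round disk $\Delta\subset\cp$.

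Next I would invoke the Kulkarni--Pinkall structure theory. To a maximal disk $D$ with $f(D)=\Delta$ one attaches its \emph{core} $\mathrm{Core}(D)\subset D$, the hyperbolic convex hull (in $\Delta\cong\D$ with its Poincar\'e metric) of the set of ideal points of $\partial\Delta$ at which $D$ cannot be enlarged. The facts I would use are: $\mathrm{Core}(D)$ is a nonempty closed convex set that is a point, a complete geodesic, or a region bounded by complete geodesics; distinct cores are disjoint; and $\til X=\bigsqcup_D\mathrm{Core}(D)$. From this one assembles the \emph{collapsing map} $\kappa\colon\til X\to Y$, where $Y$ is built by gluing the $2$-dimensional cores (each carrying the hyperbolic metric of its $\Delta$) isometrically along the $1$-dimensional cores; $\kappa$ embeds each $2$-dimensional core and collapses the flat pieces onto geodesics. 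Since $\til X$ is simply connected, so is $Y$; granting that the hyperbolic metric on $Y$ is complete, $Y\cong\D$, and the images of the $1$-dimensional cores, closed up, form a geodesic lamination $L$ on $\D$. The transverse measure is read off from $f$: crossing a $1$-dimensional core the round disks $\Delta$ rotate by a definite angle, and accumulating this rotation along arcs transverse to the cores gives a countably additive, transversally invariant measure, i.e.\ $L\in\ML$.

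It then remains to verify (i) that grafting $\D$ along $L$ returns $\til X$, (ii) uniqueness, (iii) equivariance and discreteness, and (iv) continuity. For (i) one reconstructs the projective structure of $\mathrm{Gr}_L(\D)$ from the inclusions $\mathrm{Core}(D)\hookrightarrow\Delta$ together with the recorded rotation angles and checks that the developing maps agree; this is essentially forced by the definitions. For (ii), if $\til X=\mathrm{Gr}_{L'}(\D)$ then the Kulkarni--Pinkall decomposition of the right-hand side is computable directly from the grafting data — the complementary regions of $L'$ are the $2$-dimensional cores, the leaves of $L'$ are the $1$-dimensional cores — so the decomposition is intrinsic to $\til X$ and $L'=L$. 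For (iii), every object in the construction ($D$, $\mathrm{Core}(D)$, $\kappa$, $Y\cong\D$, $L$) depends only on the projective structure, hence transforms naturally under projective automorphisms; so a $\rho_\C$-equivariant developing map induces, via $\kappa$, an action of $\pi_1(S)$ on $\D$ by a representation $\rho_\R\colon\pi_1(S)\to\pslr$ preserving $L$, and because $\kappa$ is a $\pi_1(S)$-equivariant proper homotopy equivalence the image $\Gamma=\rho_\R(\pi_1(S))$ acts properly discontinuously with $\D/\Gamma\cong S$. For (iv), developing maps of nearby structures are $C^\infty$-close on compacta, so maximal disks through a fixed point move continuously in the Hausdorff topology on round disks and the cores, being defined by a closed condition, move upper-semicontinuously; this propagates to continuity of $\kappa$ and, by the integral formula for the transverse measure, to weak-$\ast$ continuity of $L$, which is the topology of $\ML$.

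The step I expect to be the genuine obstacle is showing that the collapsed surface $Y$ is a \emph{complete} hyperbolic surface, equivalently that $\kappa$ is surjective onto $\D$: this is exactly where the two excluded projective surfaces intrude, and it requires understanding how the maximal round disks of $\til X$ exhaust it near infinity — the technical core of the Kulkarni--Pinkall argument, which I would quote rather than reprove.
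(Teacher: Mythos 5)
Your proposal is correct and follows essentially the same route as the paper: both rest on the Kulkarni--Pinkall/Kamishima--Tan decomposition of $\tilde{X}$ into maximal disks and their cores (the paper's $C(U_\infty)$), read off the transverse measure as a bending/rotation measure, and defer the key structural facts (existence of proper maximal disks, the partition into cores, completeness of the collapsed surface) to the cited references. The only cosmetic difference is that the paper realizes the collapse extrinsically as a pleated plane $\Psi(\tilde{X})\subset\mathbb{H}^3$ and then straightens it, whereas you glue the two-dimensional cores intrinsically to build $Y\cong\D$ directly --- these are the same construction.
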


\begin{proof}[Sketch of the proof] 
We follow the exposition in \cite{KamTan} with some differences in terminology arising from the fact that their work concerns conformally flat structures on manifolds of possibly higher dimension, of which projective structures on surfaces is a special case.

The goal is to construct a  pleated surface \cite[Chapter 8]{thurstonnotes} canonically associated to $\tilde{X}$.
Let $f:\tilde{X} \to \cp$ be the developing map of the projective structure. 
 Since $\tilde{X}$ is not projectively equivalent to the standard structure on $\C$, it follows that each point of $\tilde{X}$ is contained in a proper maximal disk (see Proposition 1.1.3 of \cite{KamTan}). 

A maximal disk $U$ acquires a natural Poincar\'{e} metric; define the set $U_\infty$ to be the subset of $\partial_\infty U$ that does not lie in $\tilde{X}$, and let $C(U_\infty)$ denote its projective convex hull in $U$.  Maximality guarantees that there are at least two points in $U_\infty$, so that the convex hull is non-empty.  Moreover,   each point of $\tilde{X}$ lies in the projective convex hull of a unique maximal disk \cite[Theorem 1.2.7]{KamTan}. 

Note that the image of a maximal disk $U$ under the developing map is a round disk $f(U)$ on $\cp = \partial \mathbb{H}^3$.  The disk $f(U)$ admits a canonical projection $\Phi_U$ to a totally geodesic copy of $\mathbb{H}^2 \subset \mathbb{H}^3$. Thus, $\Phi_U (U)$ is the convex hull of $\partial_\infty U$ in $\mathbb{H}^3$. Note that $\Phi_U(C(U_\infty))$ is an ideal totally geodesic hyperbolic polygon contained in $\Phi_U (U)$.  We have assume in our hypotheses in the Theorem that the projective surface $\tilde{X}$ is not the universal cover of $\cp \setminus \{0,\infty\}$; this guarantees that there exists at least one such polygon $\Phi_U(C(U_\infty))$ that is not degenerate, i.e.\ has at least three sides. The pleated surface below is constructed  from the collection of $\Phi_U(C(U_\infty))$'s as
follows.

Define a map $\Psi: \tilde{X} \to \mathbb{H}^3$ by $\Psi(x) = \Phi_U(f(x))$ if $x\in C(U_\infty)$. 
It is easy to verify that $\Psi$ is continuous,  and the image of $\Psi$ is a pleated plane $\PP$, in the sense of Thurston \cite[Chapter 8]{thurstonnotes}. 
Note that $\Psi$ may not even be locally injective; indeed, a ``straight lune" in $\tilde{X}$ (see \S2.2.)  arises when a family of maximal disks which have a pair of common ideal boundary points collapses to a single geodesic line $\gamma$, giving a bi-infinite geodesic in the pleating locus \cite[Chapter 8]{thurstonnotes}. If $\gamma$ is isolated in $\LL$, then the ideal polygons or {\it plaques} on either side of $\gamma$  lie on a pair of totally geodesic half-planes that can be thought of as being obtained from a (larger) totally geodesic polygon in  $\mathbb{H}^2$ after bending along $\gamma$ by a positive angle. It is possible that
$\gamma$ is not an isolated geodesic in the pleating locus $\LL$, in which case the angle of bending  is defined as a transverse measure on  the pleating locus. The transverse measure is called the {\it bending measure} and is denoted as $\mu$.

 Straightening the pleated plane $\PP$ determines a hyperbolic plane $\mathbb{H}^2$ (or the Poincar\'{e} disk $\mathbb{D}$).  The pleating locus gives a geodesic lamination $\LL$ on $\mathbb{D}$. The lamination $\LL$  equipped with the transverse measure $\mu$ gives a measured lamination $L$. This proves the first statement of the Theorem.
 
 We now observe equivariance. It suffices to show that $\Psi: \tilde{X} \to \mathbb{H}^3$ taking $\tilde{X}$ to a pleated surface is equivariant. To see this, note that for $U$ a maximal disk in $\til X$, so is $g.U$ for any $g \in \pi_1(S)$. Hence
 $$\Psi_{g.U}(C(g.U_\infty)) = \rho_\C(g)(\Psi_U(C(U_\infty))),$$ where the $\rho_\C(g)-$action on the RHS is via hyperbolic isometries.
 It follows that the totally geodesic hyperbolic polygons in $\PP$ are equivariant with respect to the action of $\rho_\C (\pi_1(S))$.
 Hence the pleating locus $\LL$, realized as a family of geodesics in $ \mathbb{H}^3$ is also equivariant with respect to the action of $\rho_\C (\pi_1(S))$.
 Next, note that the transverse measure on $\LL$ is given by the bending measure $\mu$. The latter determines and is determined by the straight lunes that occur in $\tilde{X}$. Since the developing map $f$ is equivariant under $\rho_\C$, the bending measure $\mu$ is invariant under the induced $\pi_1(S)-$action on $\PP$. Hence
the measured lamination $L$ is invariant under the induced $\pi_1(S)-$action on $\mathbb{H}^2$, where the latter is obtained from $\PP$ by straightening. Consequently, we obtain a representation  $\rho_\R: \pi_1(S)\to\pslr$, such that  $L$  is $\Gamma$-invariant, where $\Gamma$ is the image of $\rho_\R$.  Moreover, since the lunes that get collapsed by the map $\Psi$ are contractible, one can show that $\Psi$ induces a homotopy equivalence between the quotient spaces $S$ and $\D/\Gamma$. It is a standard topological fact that in this case this implies $S$ and $\D/\Gamma$ are homeomorphic; in particular the representation $\rho_\R$ is discrete and faithful. This proves the second statement of the Theorem.

Lastly, we observe the continuity of the  map $\tilde{X} \to L$.
As in the previous paragraph, it suffices to note the continuity of the map associating the projective surface  $\tilde{X}$ to the pleated plane $\PP$. This follows from the fact that the pleated plane $\PP$ depends continuously on the family   of totally geodesic polygons $\Psi_U(C(U_\infty))$, while the latter depends continuously on the family   of  projective polygons $C(U_\infty))$.
This proves the third statement of the Theorem.
\end{proof}

\textit{Remark.}
We refer the reader to \cite[Chapter 8]{thurstonnotes} for more details on pleated surfaces and to \cite[Chapter 9]{thurstonnotes}
for realizability of measured laminations via pleated surfaces. We also note that the map  $\tilde{X} \to L$ associating a measured lamination $L$ on $\mathbb{H}^2$ to a projective surface $\tilde{X} $ is exactly the inverse of the grafting map that obtains the projective surface $\tilde{X} $ from $\mathbb{H}^2$ by grafting according to the measured lamination $L$. Together with the equivariance statement of Theorem \ref{thu-con}, this proves Thurston's theorem, namely, the map $Gr$ in Equation \eqref{graft} is a homeomorphism. \\

We shall also use the following terminology:

\begin{defn}\label{grl} Given a projective structure $P$,  a \textit{grafting lamination} $L$ for $P$ on a hyperbolic surface $X$ is a measured lamination such that grafting $X$ along $L$ yields $P$.
\end{defn}

\section{Meromorphic projective structures and crowned hyperbolic surfaces}

In this section, we shall provide a more detailed exposition of some of the objects and their spaces already introduced in \S1, in particular, those appearing in  the statement of Theorem 1.1 (see the map defined by Equation \ref{graft2}). 

\subsection{Meromorphic projective structures and their markings} For a Riemann surface with punctures, \cite{AllBrid} considered projective structures obtained by solutions of Equation \eqref{schw} when $q$ is holomorphic away from the punctures, and has poles of finite order, greater than two, at the punctures.  Poles of order one already appear in classical Teichm\"{u}ller theory: for Fuchsian structures they arise when the uniformizing structure has a  finite-volume \textit{cusp} at the puncture.  Examples of projective structures  corresponding to meromorphic quadratic differentials with poles of order two include \textit{branched} structures;  see \cite{Luo}. \\

Recall from  the proof of Theorem \ref{projq} that the ``difference" of two projective structures, given by the Schwarzian derivative of the transition maps between charts in the two structures,  is a holomorphic quadratic differential.  Following the definition in \S3.3 of \cite{AllBrid}, we say: 

\begin{defn}\label{merp}  A meromorphic projective structure is a projective structure on a punctured Riemann surface $X \setminus P$ such that the difference (in the sense described above) with the restriction of a standard (holomorphic) projective structure on $X$ is given by a holomorphic quadratic differential on $X\setminus P$ that extends to a meromorphic quadratic differential $q$ with  poles of order greater than two at each $p\in P$. 

If, in a choice of a   coordinate disk  around a pole, $q$ has the expression
\begin{equation}\label{polar}
q = \left(\frac{a_n}{z^n} + \frac{a_{n-1}}{z^{n-1}}  + \cdots + \frac{a_1}{z}  + h(z)\right)dz^2
\end{equation}
where $h(z)$ is a holomorphic function, then the \textit{polar part} of the differential is defined to be $q - h(z)dz^2$.  
 \end{defn}
 
 \textit{Remarks.} 1. We shall assume the standard projective structure on $X$ is the uniformizing one, which in case the Euler characteristic $\chi(X) <0$ is  hyperbolic, if $\chi(X) =0$ is a quotient of $\C$, else is the projective surface $\cp$ itself. \\
 2. Unlike in \cite{AllBrid}, our definition above disallows poles of order two (or ``regular" singularities); this shall make our defining spaces of structures simpler, as our projective structures shall automatically have no ``apparent singularities". \\
 
 Recall that the \textit{horizontal} directions of a quadratic differential $q$ at a point are the tangent directions in which the differential takes real and positive values. In a  neighborhood of  a pole of order $n\geq 3$, as in Equation \eqref{polar}, the quadratic differential $q$ has $(n-2)$ equispaced directions at the pole that horizontal trajectories are asymptotic to (see Theorem 7.4 of \cite{Strebel}).
 
 \textit{Example.} For the quadratic differential $q= z^{-n}dz^2$ where $n\geq 3$, these horizontal directions at the pole are at  the points $\{\text{exp}(2\pi j/(n-2)) \mid 0 \leq j < n-2\}$ on the unit circle on the tangent plane obtained by a real blow-up at the pole.\\
 
 We also define:
 
 \begin{defn}\label{mark}
 A \textit{marking} of a meromorphic projective structure on $X\setminus P$ is a choice of a homeomorphism (up to homotopy) with a surface $S$ with boundary $C$, where each component of $C$ has (a positive number of) labeled marked points on it.   The homeomorphism  takes the horizontal directions at each pole to the marked points on a corresponding boundary component.  Here we consider two homeomorphisms the same if they are homotopic relative to the boundary (that is, by a homotopy that keeps the boundary fixed pointwise).
 
 As mentioned in \S1, $\mathcal{P}_g(\mathfrak{n})$ shall denote the space of marked meromorphic projective structures with $k$ poles of orders given by the tuple $\mathfrak{n} = (n_1,n_2, \ldots, n_k)$ , where each $n_i \geq 3$. We shall assume $2g+k>2$, that is, the underlying surface has negative Euler characteristic. 

 \end{defn}
 
 \textit{Remark.} Note that under the above notion of equivalence of two marked surfaces,    two markings that differ by a Dehn twist around the boundary component are distinct. \\
  
 It is useful to also consider an ``appended"  Teichm\"{u}ller space of the underlying marked Riemann surfaces :
 
 \begin{defn}\label{hatT} Let $S$ be an oriented surface of genus $g$ and $k$ punctures, having negative Euler characteristic, and  let $\mathfrak{n}$ be a $k$-tuple of integers as above. Then the space $\hat{\mathcal{T}}_{g,k}$ shall denote the space of marked complex structures on $S$, together with an additional real parameter $r_i$ at the $i$-th  puncture, for $1\leq i\leq k$. 
Note that a  marking includes a labeling of the punctures, and is considered up to a homotopy as in Definition \ref{mark}.
The real parameter $r_i$  serves to record:
\begin{itemize}
 \item[(a)] A set of $(n_i-2)$ equispaced points on a circle obtained as a real blowup of the $i$-th puncture, where the first point  is at $\text{exp}(i 2\pi r_i)$, and
 \item[(b)] The integer parameter $\lfloor r_i \rfloor$ that denotes the number of Dehn twists about a boundary circle obtained from a real blowup of the $i$-th puncture. 
 \end{itemize}
 
\end{defn}
 
 \textit{Remark.}  Recall that for the Teichm\"{u}ller space of a punctured surface, the puncture is thought of as a boundary component of length zero. The ``appended" Teichm\"{u}ller space defined above can be thought of as adjoining an extra Fenchel-Nielsen twist parameter about this boundary curve. \\
 
  Note that there is a projection  $\pi: \mathcal{P}_g(\mathfrak{n}) \to \hat{\mathcal{T}}_{g,k}$ that maps a meromorphic projective structure to the punctured Riemann surface underlying it, which at each puncture has 
  \begin{itemize}
  \item a set of equispaced points on the circle obtained as its real blowup,  given by the horizontal directions of the meromorphic quadratic differential, and 
  \item a marking that remembers the twist parameter; in particular, the number of Dehn-twists  around the corresponding boundary component.
  \end{itemize}

\medskip

 We then have:
 
 \begin{lem}\label{dimen} The space $\mathcal{P}_g(\mathfrak{n})$ is homeomorphic to $\R^{2\chi}$ where $\chi =6g-6 + \sum\limits_{i=1}^k (n_i+1)$.
 \end{lem}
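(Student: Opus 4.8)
\textit{Proof proposal.} The plan is to exhibit $\mathcal{P}_g(\mathfrak{n})$ as the total space of a fiber bundle over the ``appended'' Teichm\"uller space $\hat{\mathcal{T}}_{g,k}$ via the projection $\pi$ defined just above, with both the base and the fibers Euclidean, and then to add up dimensions. This is the direct analogue of how the Corollary $\mathcal{P}_g\cong\R^{12g-12}$ is deduced from Proposition \ref{projq}.

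First I would record the base. By Definition \ref{hatT}, $\hat{\mathcal{T}}_{g,k}$ is the Teichm\"uller space $\mathcal{T}_{g,k}$ of the genus-$g$ surface with $k$ punctures -- which is homeomorphic to $\R^{6g-6+2k}$ (see \cite{FLP}; under $2g+k>2$ this exponent is nonnegative) -- together with one appended real parameter $r_i$ per puncture. Hence $\hat{\mathcal{T}}_{g,k}\cong\mathcal{T}_{g,k}\times\R^k\cong\R^{6g-6+3k}$; in particular it is contractible.

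Next I would identify a fiber of $\pi$. Fix a point $(X,\{r_i\})\in\hat{\mathcal{T}}_{g,k}$, i.e.\ a marked punctured Riemann surface together with, at each $p_i$, the prescribed set of $(n_i-2)$ equispaced horizontal directions. Taking as reference the uniformizing projective structure on the closed surface $X$ (hyperbolic, flat, or $\cp$ according to the sign of $\chi(X)$, as in the Remark following Definition \ref{merp}), the Schwarzian-difference description from the proof of Proposition \ref{projq} together with Definition \ref{merp} identifies $\pi^{-1}(X,\{r_i\})$ with the set of meromorphic quadratic differentials $q$ on $X$, holomorphic on $X\setminus P$, having a pole of order exactly $n_i$ at $p_i$, and whose leading coefficient $a_{n_i}$ in Equation \eqref{polar} has the argument determined by $r_i$ (by the discussion of horizontal directions following Definition \ref{merp}, these directions are equispaced and rotate with $\arg a_{n_i}$). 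Now the space of \emph{all} meromorphic quadratic differentials on $X$ with poles of order at most $n_i$ at the $p_i$ has, by Riemann--Roch, real dimension $6g-6+2\sum_i n_i$: the obstruction term $h^1$ vanishes because the degree $4g-4+\sum_i n_i$ exceeds $2g-2$, which holds since $\sum_i n_i\geq 3k$ and $2g+k>2$. Moreover the evaluation map $q\mapsto(a_{n_1},\dots,a_{n_k})$ is surjective onto $\C^k$, since its kernel -- the differentials with pole orders at most $n_i-1$ -- has complex dimension $3g-3+\sum_i n_i-k$, again with vanishing $h^1$. Choosing linear coordinates on this space in which $a_{n_1},\dots,a_{n_k}$ are the first $k$ coordinates, the fiber $\pi^{-1}(X,\{r_i\})$ is the locus where each of these $k$ coordinates ranges over a fixed open ray $\{t\,e^{i\phi_i}:t>0\}$ and the remaining coordinates are free; being a product of rays (each $\cong\R$) and complex lines, it is convex and homeomorphic to $\R^{\,6g-6+2\sum_i n_i-k}$.

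Finally I would check local triviality and conclude. Over a ball $B\subset\hat{\mathcal{T}}_{g,k}$ one chooses a continuous family of uniformizing reference structures (these vary real-analytically over $\mathcal{T}_{g,k}$) and a continuous frame for the holomorphic, constant-rank vector bundle of meromorphic quadratic differentials, normalized so that the first $k$ frame vectors realize the coordinates $a_{n_i}$; writing a point of $\pi^{-1}(B)$ over $b\in B$ in this frame with the first $k$ coefficients expressed as $t_i e^{i\phi_i(b)}$, $t_i>0$, yields a homeomorphism $\pi^{-1}(B)\cong B\times(\R_{>0}^{\,k}\times\C^{\,3g-3+\sum_i n_i-k})$. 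Thus $\pi$ is a fiber bundle with contractible fiber over the contractible base $\hat{\mathcal{T}}_{g,k}$, hence trivial, and
\[
\mathcal{P}_g(\mathfrak{n})\ \cong\ \hat{\mathcal{T}}_{g,k}\times\pi^{-1}(\mathrm{pt})\ \cong\ \R^{(6g-6+3k)+(6g-6+2\sum_i n_i-k)}\ =\ \R^{12g-12+2\sum_i n_i+2k}\ =\ \R^{2\chi},
\]
using $\chi=6g-6+\sum_i(n_i+1)=6g-6+\sum_i n_i+k$. I expect the main obstacle to be the fiber identification in the third step: one must verify carefully that the marking together with the appended parameter $r_i$ translates into exactly one real constraint $\arg a_{n_i}=\phi_i$ on each leading coefficient -- so the fiber loses precisely $k$ real dimensions and no others -- and that the uniformizing reference structure depends continuously on the point of $\mathcal{T}_{g,k}$, so that the Schwarzian-difference identification is a homeomorphism both fiberwise and in families. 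Everything else is dimension bookkeeping built on Riemann--Roch.
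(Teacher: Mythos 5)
Your proposal is correct and follows essentially the same route as the paper: fibering $\mathcal{P}_g(\mathfrak{n})$ over $\hat{\mathcal{T}}_{g,k}\cong\R^{6g-6+3k}$ via $\pi$ and identifying the fiber as the meromorphic quadratic differentials with prescribed pole orders and horizontal directions, a cell of real dimension $6g-6+2\sum_i n_i-k$. The only difference is bookkeeping — you compute the fiber globally via Riemann--Roch with vanishing $h^1$ and cut down by the $k$ argument conditions on the leading coefficients, while the paper counts the polar-part coefficients in a fixed chart plus the $(3g-3)$-dimensional space of holomorphic quadratic differentials — and your added attention to local triviality is a welcome refinement rather than a departure.
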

 
 \begin{proof}
 Clearly $ \hat{\mathcal{T}}_{g,k} \cong \R^{6g-6+3k}$ since the usual Teichm\"{u}ller space  of a genus-$g$ surface with $k$ punctures is homeomorphic to $\R^{6g-6+2k}$ (\textit{c.f.} the remark following Definition \ref{hatT}). 
 Fix a marked Riemann surface $ X\in \hat{\mathcal{T}}_{g,k}$, and a coordinate chart $U_i$ around the $i$-th puncture (where $1\leq i\leq k$). 
 Then the fiber $\pi^{-1}(X)$ of meromorphic projective structures that project to $X$, consists of meromorphic quadratic differentials that have a  pole of order $n_i$ at the $i$-th puncture, with horizontal directions as prescribed by the corresponding real parameter $r_i$ on $X$. The horizontal directions at a pole are determined by the argument $\text{Arg}(a_n)$, where $n:=n_i$, and $a_n$ is the leading order coefficient of the polar part (Equation \eqref{polar}) as expressed in the chart $U_i$.

 This leaves the positive real number $\lvert a_n\rvert$, together with  the remaining coefficients $a_1,a_2,\ldots a_{n-1} \in \C$ of the polar part, a total of $(2n-1)$ parameters. The holomorphic quadratic differentials on a closed surface of genus $g$,  by Riemann-Roch, is a complex vector space of dimension $3g-3$. Hence the fiber  $\pi^{-1}(X)$  is   homeomorphic to a cell of (real) dimension ${6g - 6 + \sum\limits_{i=1}^k(2n_i -1)}$.
 
We conclude that the total space  $\mathcal{P}_g(\mathfrak{n})$ is homeomorphic to a cell of dimension $\left(6g - 6 + \sum\limits_{i=1}^k(2n_i -1) \right) + \left( 6g-6+ \sum\limits_{i=1}^k 3 \right) = 2\chi$. \end{proof}

  \bigskip

\textit{Remark.} In fact, $\mathcal{P}_g(\mathfrak{n})$ can be shown to be a complex manifold of dimension $\chi$ (see Proposition 8.2 of \cite{AllBrid}). 

\subsection{Crowned hyperbolic surfaces}

A \textit{hyperbolic crown} is an annulus equipped with a hyperbolic metric such that one of the boundary components is a closed geodesic (the \textit{crown boundary}), and the other comprises a finite chain of bi-infinite geodesics, each adjacent pair of which encloses a \textit{boundary cusp}.  The bi-infinite geodesics shall be called the \textit{geodesic sides} of the crown. 
A \textit{marking} on the hyperbolic crown is a labeling of the boundary cusps  together with a choice of a homotopy class of an arc from the crown boundary to a boundary cusp. The latter is an integer parameter that records the number of twists around the boundary component.

\begin{figure}[h]
  \centering
  \includegraphics[scale=0.5]{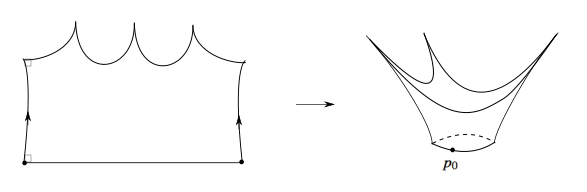}\\
  \caption{A hyperbolic crown with basepoint $p_0$ on the boundary.}
\end{figure}

A crowned hyperbolic surface $\hat{S}$ is obtained by gluing a hyperbolic crown to a hyperbolic surface with geodesic boundary $\gamma$, such that the boundary component of the crown is identified with $\gamma$. The hyperbolic crown is then a subsurface of $\hat{S}$ that we refer to as its \textit{crown end}. 

Topologically, a crowned hyperbolic surface is a surface with boundary, together with a collection of marked points on the boundary.  
A \textit{marking} on a crowned hyperbolic surface is a choice of homotopy class of an identification with such a surface, where the homotopy fixes the boundary pointwise. The latter condition amounts to fixing some boundary data (see below) that are additional parameters for specifying such a surface. 

The  ``wild" Teichm\"{u}ller space  $\TT_g(\mathfrak{n})$  introduced in \S1 (see also \cite{GupWild})  is the space of such marked crowned hyperbolic surfaces corresponding to the tuple $\mathfrak{n}$; each surface in this space has $k$ crown ends, each having $(n_i-2)$ boundary cusps.  See also \cite{Penner} for a broader context.

\subsubsection*{Boundary twist data}
A crown end of a crowned hyperbolic surface has an additional real parameter associated with it that we now describe.
Let $\gamma$ be the boundary of the crown with length $l$. Let  $\alpha$ be a fixed choice of a directed arc  between boundary cusps on the crowned hyperbolic surface $\hat{S}$, such that $\alpha$ is non-trivial in homotopy (relative to its end-points), and not peripheral in the sense that it cannot be homotoped into the crown end. (In particular, $\alpha$ intersects $\gamma$ twice.) 
First, note that the marking of the crowned surface  determines an integer twist data that records the number $t\in \Z$ of twists around $\gamma$  that  $\alpha$ makes. We shall also assume that all the twisting round $\gamma$ that $\alpha$ makes,  takes place inside the crown.

Next, a  hyperbolic crown with $m$ boundary cusps determines a  basepoint on the boundary $\gamma$: namely consider the geodesic side of the crown between the cusps labeled $m$ and $1$, and consider the foot  $p_0$ of the perpendicular that realizes the distance of that geodesic side from $\gamma$. We shall refer to this as the \textit{canonical basepoint} for the crown. 

The \textit{real}  twist parameter of the crown end is then measured relative to this canonical basepoint: let  $d$ be the distance along $\gamma$ from $p_0$ to the point  where $\alpha$  intersects $\gamma$ first (in the orientation of $\gamma$ acquired from the crown). Recall $\alpha$ completes $t$ complete twists around $\gamma$; then the twist parameter associated with the crown end is defined to be $\tau = t\cdot l + d$. 

Alternatively, instead of the choice of a directed arc $\alpha$, the twist parameter can be thought as comprising an integer twist data $t$, together with a choice of a basepoint $p$ on $\gamma$ at a distance $d \in [0,l)$ from $p_0$ on the (oriented) boundary of the crown.  As before, this can be recorded as the real number $\tau = t\cdot l + d$.\\

For the proof of the following fact,  already mentioned in the introduction, see Lemma 2.16 of \cite{GupWild}:

\begin{prop}\label{tgn} The space  $\TT_g(\mathfrak{n}) \cong \mathbb{R}^\chi$ where $\chi =6g-6 + \sum\limits_{i=1}^k (n_i+1)$. The parameters include $6g-6 + 3k$ real numbers that specify the hyperbolic surface with geodesic boundary obtained by removing the crown ends, together with parameters determining each crown, including the  boundary twist parameters as defined above. 
\end{prop}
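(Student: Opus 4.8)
The plan is to decompose a crowned hyperbolic surface $\hat S\in\TT_g(\mathfrak{n})$ along its $k$ crown boundaries $\gamma_1,\dots,\gamma_k$ into a ``core'' hyperbolic surface $\Sigma$ of genus $g$ with $k$ geodesic boundary components together with $k$ hyperbolic crowns $C_1,\dots,C_k$, where $C_i$ has $m_i:=n_i-2$ boundary cusps; to equip each piece with an explicit cell of coordinates; and then to reconstruct $\hat S$ from these coordinates together with a real twist parameter at each $\gamma_i$, checking that the resulting parametrization map is a homeomorphism onto $\mathbb{R}^\chi$. For the core I would use classical Fenchel--Nielsen theory: the Teichm\"uller space of marked hyperbolic structures on a genus-$g$ surface with $k$ geodesic boundary components, with the boundary lengths free, is homeomorphic to $\mathbb{R}^{6g-6+3k}$ (a pants decomposition has $3g-3+k$ interior curves, each contributing a length and a twist, and $k$ boundary curves, each contributing a length). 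This is exactly the ``$6g-6+3k$ real numbers'' of the statement, and in particular it records the lengths $l_1,\dots,l_k$ of the curves $\gamma_i$.

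Next I would parametrize a single crown $C$ with $m$ boundary cusps whose boundary geodesic has a prescribed length $l$. Passing to the universal cover, $C$ lifts to a region in $\mathbb{H}^2$ bounded on one side by the axis of a generator $\gamma$ of $\pi_1(C)\cong\mathbb{Z}$ --- normalized to be the imaginary axis with $\gamma(z)=e^l z$ --- and on the other side by the $\gamma$-orbit of the $m$ geodesic sides $e_1,\dots,e_m$, with consecutive sides, and the pair $(e_m,\gamma e_1)$, asymptotic at the boundary cusps. Recording the two ideal endpoints of each $e_j$ gives $2m$ real parameters; the $m$ asymptoticity conditions (each of codimension one) cut this down to an $m$-dimensional family; and quotienting by the centralizer $\{z\mapsto e^s z\}$ of $\gamma$ in $\mathrm{PSL}_2(\mathbb{R})$, which acts freely by isometries on such configurations, leaves an $(m-1)$-cell. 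An equivalent bookkeeping uses the feet $q_1,\dots,q_m\in\gamma$ and the lengths $h_1,\dots,h_m>0$ of the common perpendiculars from $\gamma$ to the $e_j$, subject to the cusp conditions and modulo rotation of $\gamma$; in either description the ``canonical basepoint'' $p_0\in\gamma$ of \S3.2 --- the foot of the perpendicular from $\gamma$ to the geodesic side between the cusps labelled $m$ and $1$ --- is well defined and pins down the rotational ambiguity. Verifying that this configuration space of geodesics (in fixed cyclic order, on a fixed side of the axis, consecutively asymptotic) is connected, contractible and a manifold of the expected dimension is the one genuinely non-classical point, and I expect it to be the main obstacle, since this is precisely where the ``wild'' geometry of the crown ends enters; it should follow from a combinatorial analysis of the possible configurations.

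Finally, I would reassemble $\hat S$. Having fixed the core $\Sigma$ (hence the $l_i$) and the shape of each crown $C_i$ (an $(m_i-1)$-cell with its canonical basepoint), the isometric gluing of $C_i$ to the $i$-th boundary of $\Sigma$ is determined by a single real number $\tau_i=t_i l_i+d_i$ as in \S3.2, where $d_i\in[0,l_i)$ is the displacement of the chosen basepoint from $p_0$ along $\gamma_i$ and $t_i\in\mathbb{Z}$ is the number of full twists specified by the marking. Thus each crown contributes $(m_i-1)+1=m_i=n_i-2$ parameters, and in total
\begin{equation*}
(6g-6+3k)+\sum_{i=1}^k(n_i-2)=6g-6+k+\sum_{i=1}^k n_i=6g-6+\sum_{i=1}^k(n_i+1)=\chi,
\end{equation*}
so the parametrization lands in $\mathbb{R}^\chi$. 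It remains to see that this map is a homeomorphism: it is injective because the core is determined up to isotopy by its Fenchel--Nielsen coordinates, each crown by its configuration of geodesic sides, and the gluing by $\tau_i$; it is surjective with continuous inverse because the construction just described --- assemble $\Sigma$ by Fenchel--Nielsen, build each $C_i$ from its ideal-endpoint data, and glue along $\gamma_i$ with twist $\tau_i$ --- produces a marked crowned hyperbolic surface depending continuously on the parameters; and it is continuous since lengths, twists and the ideal-endpoint coordinates of the crown sides are real-analytic functions on $\TT_g(\mathfrak{n})$. This is the content of Lemma 2.16 of \cite{GupWild}.
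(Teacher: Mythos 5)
Your decomposition into a Fenchel--Nielsen--parametrized core with free boundary lengths, an $(m_i-1)$-cell of crown shapes with matching boundary length, and one real twist parameter per gluing is correct, the dimension count $(6g-6+3k)+\sum_{i=1}^k(n_i-2)=\chi$ is right, and this is the expected argument: the paper itself gives no proof beyond citing Lemma 2.16 of \cite{GupWild}, which proceeds by the same core-plus-crowns decomposition. The one step you flag as a possible obstacle --- that the configuration space of geodesic sides of a single crown is a cell --- is in fact elementary in your own normalization: with $\gamma(z)=e^{l}z$ and the centralizer used to fix $a_0=1$, the remaining ideal endpoints satisfy $1<a_1<\cdots<a_{m-1}<e^{l}$, an open simplex, hence an $(m-1)$-cell depending continuously (indeed real-analytically) on $l$.
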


\smallskip

\subsection{Measured laminations on crowned hyperbolic surfaces} 
As described in \S1, a measured lamination on a crowned surface could have non-compact support, with finitely many leaves that exit through the boundary cusps of the crown end. In this paper, such a lamination will also include the geodesic sides of the crown end, each of which is assigned weight $\infty$.  (See Figure 3.) 

\begin{figure}
	\centering
	\includegraphics[scale=0.5]{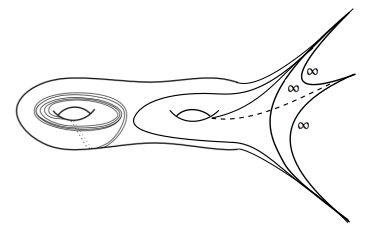}\\
	\caption{A measured lamination on a crowned hyperbolic surface.}
\end{figure}

Suppose that the crowned surface is in $\TT_g(\mathfrak{n})$.  Thus it has $k$ crown ends, where the number of boundary cusps of crown ends is given by the $k$-tuple $\mathfrak{n}$. Then the space of such measured laminations is  $\mathcal{ML}_g(\mathfrak{n})$.  Just as for $\mathcal{ML}_g$ in \S2.3, this space can be thought of as parametrizing topological objects. 

Note that the space $\mathcal{ML}_g$ of measured laminations on a \textit{closed} surface of genus $g\geq 2$ can be parametrized by weighted train-tracks (see \cite{Pen-Har}); indeed, $\mathcal{ML}_g$ acquires its topology via this parametrization. 
One way of parametrizing the space $\mathcal{ML}_g(\mathfrak{n})$ (and equipping it with a topology) would be to use weighted train-tracks \textit{with stops}, as introduced in \S1.8 of \cite{Pen-Har}. (Note that  \cite{Pen-Har} considers a single stop on each boundary component, but this can easily be extended to the case of multiple stops.)  \\

In what follows, we provide an alternate parametrization, by dividing a measured lamination $\lambda$ on a crowned hyperbolic surface into its intersections with the crown-ends, and with the surface with boundary that is the complement of the crowns.  (We shall always assume that the twisting of leaves entering a crown end around the corresponding crown boundary takes place in the crown.) 

This approach takes advantage of the fact that the parametrization of measured laminations on a surface with boundary is well-known (see, for example, Proposition 3.9 of \cite{Alps}).  In what follows we shall first  prove a similar parametrization of measured laminations on a hyperbolic crown (Proposition \ref{lamcrown}), and parametrize  $\mathcal{ML}_g(\mathfrak{n})$ by combining these two parametrizations (Proposition \ref{mln}). Part of the proof is to show that when we attach  crown ends to a surface-with-boundary, then measured laminations on the pieces can be matched up to produce a measured lamination on the crowned hyperbolic surface -- the details of this are deferred to the Appendix.

We shall implicitly assume that  $\mathcal{ML}_g(\mathfrak{n})$ acquires a topology via this parametrization. \\

We start with the following observation:

\begin{lem}\label{lamcrown0} The intersection of  the measured lamination $\lambda\in \mathcal{ML}_g(\mathfrak{n})$ with a crown end $\mathcal{C}$  is a collection of (isolated) weighted arcs, each of infinite length, that either run from a boundary cusp to the crown boundary, or between two non-adjacent boundary cusps. 
\end{lem}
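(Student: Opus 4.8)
The plan is to pin down the components of $\lambda\cap\mathcal{C}$ directly, using three elementary facts: that the geodesic sides $s_1,\dots,s_m$ of $\mathcal{C}$ are themselves leaves of $\lambda$, so no leaf of $\lambda$ crosses any of them; that a transversal to a simple closed geodesic carrying a transverse measure of finite mass can meet a given leaf only finitely often, so no leaf of finite weight spirals onto such a geodesic; and that two distinct complete geodesics of $\mathbb{H}^2$ meet at most once. Write $\gamma$ for the crown boundary and $c_1,\dots,c_m$ for the boundary cusps, with $s_j$ joining the adjacent cusps $c_j$ and $c_{j+1}$. First I would record that, since $\mathcal{C}$ is topologically an annulus with $\partial\mathcal{C}=\gamma\cup s_1\cup\dots\cup s_m$ and no leaf of $\lambda$ crosses a geodesic side, each component $a$ of $\lambda\cap\mathcal{C}$ is a complete geodesic arc in $\mathcal{C}$ each of whose two ends meets $\gamma$ transversally, escapes into one of the cusps $c_j$, or is asymptotic to a lift of $\gamma$ (i.e.\ spirals onto $\gamma$ inside $\mathcal{C}$).

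Next I would exclude the last possibility for a component $a$ that is not a geodesic side (the geodesic sides, being leaves of $\lambda$ on $\partial\mathcal{C}$ joining adjacent cusps, are accounted for separately, and $a$ then has finite positive weight): if an end of $a$ spiralled onto $\gamma$ then $\gamma\subseteq\overline{a}$ would be a leaf of $\lambda$, and a short transversal to $\gamma$ would meet $a$ infinitely often, contradicting the finiteness of its transverse measure. So every end of $a$ meets $\gamma$ transversally or runs out a cusp. I would then rule out components with both ends on $\gamma$: lifting to the universal cover $\widetilde{\mathcal{C}}\subset\mathbb{H}^2$, in which $\gamma$ (which generates $\pi_1(\mathcal{C})\cong\mathbb{Z}$) lifts to a single complete geodesic $\widetilde{\gamma}$, such a component would lift to a geodesic segment meeting $\widetilde{\gamma}$ in two points, which is impossible. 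Hence $a$ runs from a cusp to $\gamma$ or between two cusps, and one checks that in the latter case, distinctness from the geodesic sides forces the two cusps to be non-adjacent. In all cases $a$ has infinite length, since any geodesic arc asymptotic to a cusp has infinite hyperbolic length.

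It remains to see that $\lambda\cap\mathcal{C}$ has only finitely many components and that these are isolated. Here I would first observe that the compactly supported part of $\lambda$ cannot meet $\mathcal{C}$ at all: a leaf of a compact sublamination stays in a compact subset of the annulus $\mathcal{C}$, hence is isotopic to the core curve $\gamma$ or spirals onto it; the former forces it to equal $\gamma\subset\partial\mathcal{C}$, and the latter was excluded above. Consequently every component of $\lambda\cap\mathcal{C}$ other than the $s_j$ lies on one of the finitely many leaves of $\lambda$ that exit the boundary cusps --- finitely many by the defining description of $\mathcal{ML}_g(\mathfrak{n})$ --- and each such leaf, being a simple geodesic whose two ends run out to cusps, meets the simple closed geodesic $\gamma$ in only finitely many points and hence contributes finitely many arcs to $\mathcal{C}$. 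Finally the finitely many leaves of $\lambda$ meeting $\mathcal{C}$ are pairwise disjoint closed subsets, so every component of $\lambda\cap\mathcal{C}$ is isolated; assembling these steps gives the lemma.

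The step I expect to demand the most care is this last one, finiteness and isolation, since it is where one genuinely uses the annular topology of $\mathcal{C}$ (to forbid any compactly supported or recurrent leaf inside the crown), together with the no-spiralling property of finite-weight leaves and the built-in finiteness of the non-compact leaves of an element of $\mathcal{ML}_g(\mathfrak{n})$; by contrast, the local classification into the two arc types is a routine consequence of the disjointness of leaves and the absence of geodesic bigons in $\mathbb{H}^2$.
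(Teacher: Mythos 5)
Your classification of the possible ends of a component of $\lambda\cap\mathcal{C}$ --- transverse to $\gamma$, asymptotic to a boundary cusp, or spiralling onto $\gamma$ --- and the exclusion of spiralling via local finiteness of the transverse measure is exactly the paper's first step (the paper phrases it in the universal cover of the whole surface: the only ideal accumulation points of the cusp points of a lifted crown are the endpoints of the lift of $\gamma$, and no leaf of positive weight can be asymptotic to those). Your explicit exclusion of components with both endpoints on $\gamma$, via the universal cover of the annulus and the fact that two geodesics in $\mathbb{H}^2$ meet at most once, is a worthwhile addition that the paper leaves implicit. Two small caveats: the assertion that a compactly supported leaf meeting $\mathcal{C}$ ``stays in a compact subset of the annulus'' is unjustified as written (such a leaf could a priori enter and leave), though the conclusion you want already follows from your end-classification; and the parenthetical ``one checks \ldots non-adjacent'' does not actually check out --- for a crown with $m\geq 3$ cusps the arc joining two adjacent cusps the long way around the annulus is simple and disjoint from the geodesic sides --- but the paper's own proof is silent on adjacency, so this reflects an imprecision in the statement rather than a defect specific to your argument.

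The substantive gap is in the finiteness step. You take ``finitely many leaves exit the boundary cusps'' from the defining description of $\mathcal{ML}_g(\mathfrak{n})$, but this lemma is precisely where the paper establishes that finiteness; the informal description in \S 3.4 is not a definition with finiteness built in, and without an argument one cannot rule out, say, an infinite fan of pairwise disjoint leaves emanating from a single boundary cusp with summable weights. The paper closes this by an area argument: $\lambda\cap\gamma$ is a closed subset of $\gamma$, each complementary interval of $\gamma\setminus(\lambda\cap\gamma)$ bounds a region $B_i$ in $\mathcal{C}$ cut out by leaves exiting the cusps and possibly some geodesic sides, and infinitely many pairwise disjoint such regions would force the crowned surface to have infinite area. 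You need this (or an equivalent) to make the count of components finite; once you have it, your properness argument that each cusp-exiting leaf meets the compact geodesic $\gamma$ only finitely often correctly bounds the number of arcs per leaf, and isolation follows.
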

\begin{proof}
In the universal cover, the boundary cusp points corresponding to a lift $\hat{P}$ of the crown $\mathcal{C}$  have precisely two accumulation points: the endpoints of the geodesic line that is the lift of the crown boundary $\gamma$ . No leaf of $\lambda$ can be asymptotic to these two points. This is because such a leaf would have to spiral infinitely many times around the closed curve $\gamma$, and therefore could not have positive transverse measure. Hence the restriction of a lift of the lamination $\lambda$ to $\hat{P}$ is a collection of geodesic lines, each having (one or both) endpoints at a set of isolated points on the ideal boundary. 

Recall that $\gamma = \partial \mathcal{C}$ is the closed geodesic that is the crown boundary. We note finally that there can be at most finitely many
geodesics in $\lambda$ that intersect $\mathcal{C}$. To see this, observe that
the intersection
$\lambda \cap \gamma$ is a closed subset of $\gamma$. For each complementary interval $I_i$ in $\gamma \setminus (\lambda \cap \gamma)$, there is an polygon $B_i$ in $\mathcal{C}$ bounded by $I_i$ on one side,  two geodesic leaves of $\lambda$ that exit the boundary cusps, and possibly some geodesic sides of the crown. If $\lambda \cap \gamma$ is infinite, there are infinitely many such distinct (and necessarily disjoint) $B_i$'s forcing the total area of the crowned hyperbolic surface to be infinite--a contradiction.
\end{proof}

In what follows, we define a \textit{measured lamination on a hyperbolic crown} to be a collection of finite weighted geodesics as above. (that we refer to as \textit{arcs}).  Note that the closed geodesic that is the crown boundary, could also be part of the lamination. We also require that there is at most one arc from a boundary cusp to the crown boundary; thus, arcs obtained by ``splitting" (see Appendix) will be considered the same arc (with a total weight equal to the sum of individual weights).

\begin{prop}\label{lamcrown} For a hyperbolic crown $\mathcal{C}$ with $(n-2)$ boundary cusps, the space of measured geodesic laminations 
on $\mathcal{C}$ is parametrized by $\R^{n-1}$. The parameters include the transverse measure $l$ of the boundary $\gamma$ of the crown, and the boundary twist parameter $\tau  \in \mathbb{R}$ , which together parametrize $\R^2$. 
\end{prop}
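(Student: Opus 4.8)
The plan is to obtain Proposition~\ref{lamcrown} by combining Lemma~\ref{lamcrown0} with the standard parametrization of measured laminations on a compact surface with marked boundary, the crown boundary $\gamma$ contributing a separate, explicitly understood, two–dimensional factor. Topologically $\mathcal C$ is an annulus carrying $m=n-2$ marked points (the boundary cusps) on one of its boundary circles, the $m$ arcs between consecutive cusps being the geodesic sides; since these always lie in $\lambda$ with weight $\infty$, they contribute no parameters.

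First I would use Lemma~\ref{lamcrown0} to record that $\lambda\cap\mathcal C$ consists of $\gamma$ with a weight $l\ge 0$, together with finitely many weighted arcs, each joining two non-adjacent cusps or running from a cusp into $\gamma$, with at most one arc to $\gamma$ from each cusp. Next I would fix a reference ideal triangulation $\Delta$ of $\mathcal C$: introducing an auxiliary marked point $v\in\gamma$, complete the geodesic sides and the loop $\gamma$ (based at $v$) to a triangulation; an Euler-characteristic count gives exactly $n-1$ interior arcs. By the theory of measured laminations on a surface with marked boundary --- via weighted train tracks \emph{with stops} as in \S1.8 of \cite{Pen-Har}, or as in Proposition~3.9 of \cite{Alps} --- every such $\lambda$ is carried by a train track obtained by smoothing a sub-collection of $\Delta$, is recorded within that combinatorial type by non-negative weights on the arcs subject to the switch and triangle conditions, and the finitely many resulting charts patch together to present the whole space as a cell. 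Counting dimensions across a maximal chart yields $\R^{n-1}$.

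Finally I would isolate the distinguished coordinates. The weight of $\gamma$ together with the data of how the leaves of $\lambda$ incident to $\gamma$ sit and wind about $\gamma$ --- which, once the auxiliary vertex $v$ is forgotten and given the marking of $\mathcal C$, is exactly the pair $(l,\tau)$ of the transverse measure and the real twist parameter of \S3.2 --- sweep out an $\R^2$ factor, while the remaining $n-3$ coordinates are the weights of the leaves lying in the complement of a collar of $\gamma$ (the arcs between non-adjacent cusps and the truncated cusp-to-$\gamma$ arcs). Continuity of the parametrization follows as in \cite{Pen-Har}.

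\textbf{The main obstacle.} The delicate point is the behaviour near $\gamma$. One has to observe that $\gamma$ cannot carry positive weight as a closed leaf while a positive-weight leaf spirals into it, since that would force an infinite transverse measure, and then track exactly how the closed-leaf weight and the weights of in-spiralling leaves combine into the single coordinate $l$, in such a way that $(l,\tau)$ genuinely parametrizes $\R^2$ --- in particular so that the locus where $\gamma$ has zero transverse measure, where the \S3.2 twist degenerates, collapses correctly. One must also verify that the resulting bijection is a homeomorphism across the walls where the supporting train track changes combinatorial type, including the ``collapse'' locus where a family of maximal disks degenerates to a single geodesic. This is exactly the gluing-of-charts delicacy present in the closed-surface train-track coordinates for $\mathcal{ML}$, and I would dispatch it by comparing the coordinates induced by overlapping charts, in the spirit of \cite{Pen-Har}.
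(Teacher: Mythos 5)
Your skeleton is sound and the dimension count ($n-1$ interior arcs of a triangulation of the annulus with $n-2$ marked points plus an auxiliary vertex on $\gamma$) is correct, but you take a different combinatorial route from the paper and, more importantly, the one step you defer is precisely where the paper's proof does its real work. The paper does not use train tracks with stops or a reference triangulation: it encodes a lamination on the crown by its \emph{dual metric graph} (one vertex per complementary region, one finite edge per weighted arc with length equal to the weight, one infinite edge per geodesic side, and either a cycle $\mathfrak{c}$ when $l>0$ or an extra finite edge for the closed leaf $\gamma$ when $l\le 0$), and quotes the fact that for each fixed $l$ the resulting space of metric graphs is a cell complex homeomorphic to $\R^{n-3}$ (Theorem 3.3 of \cite{MulPenk}; Theorem 16 of \cite{GupWolf3} for the tree case). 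Your chart-patching via \cite{Pen-Har} would deliver the same $\R^{n-3}$ slices, so the two routes are interchangeable there; the paper's choice avoids the auxiliary vertex $v$ and the need to forget it afterwards. One small correction to your setup: a closed leaf $\gamma$ of positive weight and an arc running from a cusp to $\gamma$ cannot coexist simply because leaves of a lamination are disjoint (not because of any infinite-transverse-measure phenomenon; Lemma \ref{lamcrown0} already rules out spiralling). The paper's convention is to fold this dichotomy into a single coordinate $l\in\R$: for $l>0$ it is the transverse measure of $\gamma$, and for $l\le 0$ it records $\gamma$ as a closed leaf of weight $\lvert l\rvert$.

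The genuine gap is that ``the main obstacle'' you identify --- that $(l,\tau)$ genuinely sweeps out an $\R^2$ factor across the degenerate locus $l\le 0$ --- is not something that can be dispatched by a generic appeal to comparing overlapping train-track charts; it is the actual content of the proposition's second sentence, and the paper resolves it with an explicit model. Take the upper half-plane $\mathsf{H}$ with height $l>0$ and horizontal coordinate $\tau$, partitioned into wedges $V_j=\{\,j\cdot l\le\tau\le(j+1)\cdot l\,\}$ recording the integer part of the twist; append the locus $l\le 0$ as the boundary line of $\mathsf{H}$ with the positive and negative half-rays identified (both $(\pm l,0)$ representing the closed leaf $\gamma$ of weight $\lvert l\rvert$). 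The wedges $V_j$ accumulate onto these half-rays as $j\to\pm\infty$, which matches the geometric limit: fixing $\tau$ and letting $l\downarrow 0$, the arcs meeting $\gamma$ twist more and more while carrying less and less weight, and converge as measured laminations to $\gamma$ with a weight. One then checks directly that this quotient of the closed half-plane is homeomorphic to $\R^2$. Without this (or an equivalent explicit verification), your proof establishes a bijection with a set of combinatorial data but not the claimed homeomorphism with $\R^{n-1}$, since it is exactly at $l=0$ that the naive product structure of ``weight of $\gamma$'' and ``twist'' breaks down.
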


\begin{proof}
Recall that the bi-infinite geodesic sides of a crown end are also part of this lamination on the crown, each equipped with infinite weight. 
 A collection of disjoint weighted geodesics $\mathcal{G}$ on $\mathcal{C}$ can then be represented by a dual metric graph $G$, that we define as follows:
 
 The vertices of $G$ are one for each complementary region of $\mathcal{G}$,  and each edge of $G$ is either\\
 (a) transverse to an arc in $\mathcal{G}$ and having length equal to its weight,  and connecting the vertices in the complementary regions on either side, or \\
 (b) has infinite length, from a vertex to a geodesic side of the crown, in case the complementary region is bounded by such a side. \\
 Note that there are $(n-2)$ edges of infinite length corresponding to the $n$ geodesic sides of the crown, and if the crown boundary has positive measure, there is a unique cycle of edges corresponding to the boundary, that we shall denote by $\mathfrak{c}$.  See Figure 4.  
 
 Moreover,  the requirement of at most one arc from a boundary cusp to crown boundary, ensures that each vertex of $G$ is at least trivalent. 
 
 For any fixed positive transverse measure  $l$ of the boundary, the space of such metric graphs is  homeomorphic to $\R^{n-3}$ (see, for example, Theorem 3.3 of \ref{MulPenk}). The idea is that for a fixed topological-type of the graph, varying the lengths of the edges parametrizes a cell, and the different cells fit together to give a cell-complex that is homeomorphic to a ball. \\
  
 \begin{figure}
	\centering
	\includegraphics[scale=0.44]{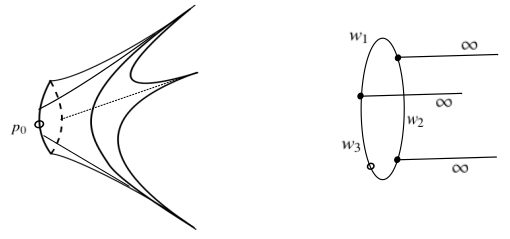}\\
	\caption{A  collection of weighted diagonals on a hyperbolic crown (left) determines a dual metric graph (right). 
	This figure depicts a case when the resulting graph is of a generic type: varying the weights parametrizes a cell of top dimension in the resulting cell-complex. }
\end{figure}

In fact, just as in \S3.4 of \cite{Alps}, one can interpret a \textit{non-positive} transverse measure $l$ the following way: in such a case, there will be no geodesic arcs incident on the crown boundary, but instead the crown boundary itself, which is a closed geodesic, will be part of the measured geodesic lamination, and will be given a weight $\lvert l \vert$.

The dual metric graph in such a case will be a tree, with $(n-2)$ edges of infinite length as before, but now with an additional finite-length edge corresponding to the closed boundary geodesic, instead of a cycle. Once again, for any fixed $l\leq 0$, the space of such dual metric trees is  homeomorphic to $\R^{n-3}$ (\textit{c.f.} Theorem 16 of \cite{GupWolf3}).\\

It remains to verify that the total space (as we vary the  transverse measure in $\R$) is also homeomorphic to a ball having \textit{two} additional dimensions;  one of these parameters is the transverse measure itself, that we denote by $l$, and the other is the boundary twist parameter $\tau$. 

However, note that the twist parameter for the crown $\mathcal{C}$ will only affect the measured lamination on it only in the case that $l>0$, for only then will there be geodesic leaves incident on the crown boundary. If $\tau = t\cdot l + d$, we shall call $t$ the \textit{integer part} of the twist parameter. This integer records the number of Dehn twists such leaves make around the crown boundary. The real part of the twist parameter $\tau$ is the real number $d \in [0,l)$, and it determines the position of the basepoint on the cycle $\mathfrak{c}$ of the dual metric graph, relative to the canonical basepoint on the crown boundary (see \S3.2). 

In the remainder of this proof we shall describe how the parameters $(l,\tau)$ still determine copy of $\R^2$. Together with the previous discussion, this would imply that the space of measured geodesic laminations on $\mathcal{C}$ is $\R^{n-3} \times \R^{2} \cong \R ^{n-1}$. 
  
 First, consider the upper half-plane $\mathsf{H} \subset \R^2$ where the height is given by the transverse measure $l$, and the twist parameter $\tau$ determines the horizontal coordinate. At a fixed height $l>0$, when the cycle $\mathfrak{c}$ has total length $l$, the range of values $0\leq \tau < l$ will correspond to $0$ integer twist, the range  $l\leq \tau < 2l$ will correspond to the integer part $t=1$, and so on. This partitions  $\mathsf{H} $ into wedge-shaped regions $V_j = \{j\cdot l \leq \tau \leq (j+1)\cdot l\}$ for $j\in \mathbb{Z}$, that represent the different integer parts of the twist parameter.

Next, we include the points where the transverse measure  $l\leq 0$  (where the twist parameter is no longer relevant) as the 
real line  boundary $\mathbb{R}$ of the upper half-plane $\mathsf{H}$  with an identification of the positive and negative half-rays; that is,  both the points $(-l,0)$ and $l,0)$ represents the same point, where the transverse measure is $l\leq 0$.

Note that the wedges $V_j$ accumulate onto these half-rays as $j\to \pm \infty$. This fits into the tiling of the interior of the upper half-plane described earlier:

If one fixes the twist parameter $\tau$, and then decreases the transverse measure of the boundary (that is, go vertically down to the boundary in $\mathsf{H}$), then the leaves of the lamination intersecting the crown boundary  will have an increasing number of twists around the boundary, but have proportionately smaller weights, and will limit (as a measured lamination) to the boundary geodesic with a weight. 

It is easy to now verify that the closed upper half-plane $\overline {\mathsf{H}}$ with the identification on the boundary half-rays as described above, is homeomorphic to $\R^2$, as we claimed. 
\end{proof}

\begin{prop}\label{mln} The space of measured laminations $\mathcal{ML}_g(\mathfrak{n})$ is homeomorphic to $\R^\chi$ where $\chi = 6g-6 + \sum\limits_{i=1}^k (n_i+1)$.
\end{prop}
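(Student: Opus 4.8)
The plan is to prove the homeomorphism by the cut-and-match strategy announced before the statement. I would cut a crowned hyperbolic surface $\hat{S}\in\TT_g(\mathfrak{n})$ along its crown boundary geodesics $\gamma_1,\dots,\gamma_k$ into the $k$ hyperbolic crowns $\mathcal{C}_1,\dots,\mathcal{C}_k$ and the complementary hyperbolic surface-with-boundary $\Sigma$, of genus $g$ with $k$ geodesic boundary circles, and define a \emph{cutting} map
\[
\kappa:\ \mathcal{ML}_g(\mathfrak{n})\longrightarrow\mathcal{ML}(\Sigma)\times\prod_{i=1}^{k}\mathcal{ML}(\mathcal{C}_i),\qquad \lambda\longmapsto\bigl(\lambda\cap\Sigma,\ (\lambda\cap\mathcal{C}_i)_{i=1}^{k}\bigr).
\]
That $\kappa$ is well defined uses Lemma \ref{lamcrown0}: each $\lambda\cap\mathcal{C}_i$ is a measured lamination on the crown in the sense defined just before Proposition \ref{lamcrown}, while $\lambda\cap\Sigma$ is a measured lamination on the bordered surface $\Sigma$ where, exactly as in the crown case, a boundary curve $\gamma_i$ occurring as a leaf of weight $w$ is recorded as the value $l_i=-w\le 0$ of its transverse measure. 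I expect the image of $\kappa$ to be precisely the \emph{matching locus} $\mathcal{W}$, namely the subset of the product where, for each $i$, the transverse measure $l_i$ of $\gamma_i$ read off from the $\Sigma$-side equals the one read off from the $\mathcal{C}_i$-side.

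For the inputs I would quote Proposition \ref{lamcrown}, giving $\mathcal{ML}(\mathcal{C}_i)\cong\R^{n_i-1}$ with $l_i$ realized as the ``height'' coordinate of the upper-half-plane model in its proof, together with the known parametrization of measured laminations on a bordered surface (Proposition 3.9 of \cite{Alps}), which gives $\mathcal{ML}(\Sigma)\cong\R^{6g-6+3k}$; the latter is to be read as incorporating, just as for crowns, one real ``boundary weight'' $l_i$ per boundary circle, permitted to be non-positive so as to encode $\gamma_i$ as a leaf. Since $\mathcal{W}$ is cut out of the product by the $k$ independent conditions ``$(\Sigma$-side $l_i)=(\mathcal{C}_i$-side $l_i)$'', the dimension bookkeeping reads
\[
\dim\mathcal{W}\;=\;(6g-6+3k)+\sum_{i=1}^{k}(n_i-1)-k\;=\;6g-6+\sum_{i=1}^{k}(n_i+1)\;=\;\chi,
\]
which agrees with $\dim\TT_g(\mathfrak{n})$ (Proposition \ref{tgn}) and equals $\tfrac{1}{2}\dim\mathcal{P}_g(\mathfrak{n})$ (Lemma \ref{dimen}), as it must on the two sides of the grafting map.

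The hard part is upgrading this count to an honest homeomorphism $\mathcal{W}\cong\R^\chi$, and that is where I would spend the effort: a priori $\mathcal{W}$ is only a cell complex, since the combinatorial type of $\lambda$ near each $\gamma_i$ -- the number of points of $\lambda\cap\gamma_i$, and the sign of $l_i$ (arcs crossing $\gamma_i$ versus $\gamma_i$ itself a leaf) -- jumps along walls, and one must check that the charts coming from Proposition \ref{lamcrown} and from \cite{Alps} glue across those walls into a genuine manifold. Constructing the inverse \emph{matching} map $\mathcal{W}\to\mathcal{ML}_g(\mathfrak{n})$ is the crux: to glue $\lambda_\Sigma$ to $\lambda_{\mathcal{C}_i}$ along $\gamma_i$ one must identify the two finite weighted subsets $\lambda_\Sigma\cap\gamma_i$ and $\lambda_{\mathcal{C}_i}\cap\gamma_i$ of the circle $\gamma_i$; since two weighted point-sets of equal total weight on a circle always admit a common refinement, such an identification exists, is unique up to a rotation of $\gamma_i$ recorded by the crown's real twist parameter $\tau_i$, and is compatible with treating ``split'' arcs (a boundary-cusp arc subdivided into parallel copies) as a single arc carrying the summed weight. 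One then checks that cutting and matching are mutually inverse bijections and -- the genuinely technical point -- continuous in both directions, with respect to the topology on $\mathcal{ML}_g(\mathfrak{n})$ induced by this parametrization (equivalently, by weighted train-tracks with stops in the sense of \S1.8 of \cite{Pen-Har}), in particular as intersection points on $\gamma_i$ collide or as $l_i\to 0^{+}$, so that $\mathcal{W}$ is a ball rather than a more singular complex. This last step -- that the laminations on the pieces match up cleanly -- is exactly the content deferred to the Appendix, and I expect it to be the main obstacle.

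Granting it, the chain $\mathcal{ML}_g(\mathfrak{n})\cong\mathcal{W}\cong\R^\chi$ completes the proof. As a fallback I would keep in mind that the weighted-train-track-with-stops parametrization of \cite{Pen-Har} already exhibits $\mathcal{ML}_g(\mathfrak{n})$ directly as a cell, so that the cutting decomposition could in principle be used only for the dimension count; but the decomposition above has the advantage of also producing the explicit coordinates that the grafting construction of \S4 and the later sections rely on.
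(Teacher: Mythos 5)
Your proposal is correct and follows essentially the same route as the paper: cut along the crown boundaries, parametrize the crown pieces by Proposition \ref{lamcrown} and the bordered surface by the known $\R^{6g-6+3k}$ parametrization, impose the $k$ matching conditions on the boundary transverse measures to get dimension $\chi$, and defer the gluing (splitting and matching of arcs, with the twist parameter fixing the identification) to the Appendix, using Lemma \ref{lamcrown0} for surjectivity. You are if anything more explicit than the paper about the need to verify that the charts glue across walls into a genuine cell and that the cut-and-match maps are continuous inverses, which the paper treats rather tersely.
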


\begin{proof}

 It is well-known that  the space of measured laminations on a surface $S$ of genus $g$ and $k$ boundary components is a cell of dimension $6g-6 + 3k$ (see, for example, Proposition 11 of \cite{GupWolf3}). The parameters are the transverse measure, and a twist parameter,  for each interior pants curve for a pant decomposition of the the surface-with-boundary, together with the transverse measures of the $k$ boundary components.   The case that the transverse measure of such a component is non-positive, say $l\leq 0$, can be interpreted as in the proof of Proposition \ref{lamcrown}. Namely, in that case the boundary itself is a leaf of the lamination, with weight $\lvert l \rvert$.

To the $i$-th boundary component, where $1\leq i\leq k$, we can now attach a crown end with $(n_i-2)$ boundary cusps. By Proposition \ref{lamcrown}, a measured lamination on such a crown end is determined by $(n_i-1)$ parameters, and in the Appendix we describe how such a lamination is matched with the measured lamination on the surface-with-boundary, to obtain a measured lamination on the crowned hyperbolic surface $\hat{S}$.
However, for this gluing, the transverse measures on the common boundary induced by the two laminations need to match. So, the total number of real parameters is $\chi$, as desired. 

From Lemma \ref{lamcrown0}, it is not hard to see that \textit{any} measured lamination on the crowned hyperbolic surface arises as a result of such a construction, completing the proof. 
\end{proof}

\textit{Remark.} Alternatively, such measured laminations can be shown to be equivalent to measured foliations with pole singularities,  as defined in \cite{GupWolf3} -- see, for example, \S11.8-9 of \cite{Kap} for a proof of this equivalence in the case of closed surfaces.  The latter space of measured foliations with pole singularities is parametrized in Proposition 10 of \cite{GupWolf3}, and shown to be homeomorphic to $\R^\chi$.

\subsubsection*{Grafting} The operation of grafting a crowned hyperbolic surface $\hat{S}$ along  a measured lamination  $\lambda$ on it  makes sense. As described in \S2.2 and \S2.3, we first pass to the universal cover and perform the relative bending for each of the lifts of the leaves of $\lambda$ or its finite approximations, and then take a limit. The infinite grafting for each geodesic side of the crown end (which have infinite weight) can be thought of as grafting in an infinite concatenation of lunes:  conformally this yields a half-plane. This gives us a new projective structure on the  punctured Riemann surface; we shall see later (see \S4.3) that this is in fact a meromorphic projective structure as in Definition \ref{merp}.

\section{Proof of Theorem 1.1}

In the preceding sections, we have completed defining the spaces that appear in Equation \eqref{graft2} which we reproduce below:
\begin{equation}
\widehat{Gr}: \TT_g(\mathfrak{n})  \times {ML}_g(\mathfrak{n}) \to  \mathcal{P}_g(\mathfrak{n}).
\end{equation}
Recall that $\mathfrak{n} = (n_1, n_2,\ldots, n_k)$ records the orders of the poles (each greater than two)  at the $k$ labeled punctures, and for each $1\leq i\leq k$,  we denote $m_i =(n_i-2)$ to be  the number of boundary cusps of the corresponding crown end for a surface in $\TT_g(\mathfrak{n})$. 

In this section, we complete the proof that the map $\widehat{Gr}$ is a homeomorphism. \\

For ease of notation, we shall assume throughout that $k=1$, that is, the underlying Riemann surface has a single puncture, or equivalently, the underlying hyperbolic surface has  a single crown end. Thus there is an integer $n\geq 3$  denoting the order of the pole in the Riemann surface interpretation;  equivalently, $(n-2)$ is the number of boundary cusps of the crowned hyperbolic surface. The proofs in the section only involve a local analysis around the pole, and are exactly the same for multiple punctures/crown ends. \\

\subsection{Linear differential systems and asymptotics of the solutions}
We begin with some key results from classical work on linear differential equations on the complex plane. 

Recall  that the developing map for a meromorphic projective structure is the ratio of two linearly independent solutions of the Schwarzian equation \eqref{schw}, where the quadratic differential $q$ is of the form Equation \eqref{polar} on a coordinate disk $U$ around the pole. 

Using a  change of coordinate $z\mapsto w:=c/z$ for a suitable $c$,  one can consider the (transformed) quadratic differential to be of the form 
\begin{equation}\label{qexp}
q(w) = -2 \cdot (w^{d} + \alpha_{d-1}w^{d-1} + \cdots \alpha_1w + \alpha_0 + \alpha_{-1}w^{-1} +  \cdots) 
\end{equation}
where $d=n-4$, so that it has a pole of order $n$ at $\infty$.  Our choice of the factor $(-2)$ is merely in order to match with the classical  literature (see, for example, Equations 1.1 and 1.2 of \cite{H-S}). 

The Schwarzian equation restricted to $U$ is then the equation 
\begin{equation}\label{schw2}
u^{\prime\prime}(w) + \frac{1}{2}q(w) u(w) = 0 
\end{equation}
defined on a neighborhood of $\infty$ in $\C$. 

Taking $X(w) = \left(\begin{smallmatrix}u \\u^\prime\end{smallmatrix}\right)$, the equation \eqref{schw2} can  be written as the linear system of rank two:
\begin{equation}\label{linsys1} 
X^\prime(w) = A(w) X(w) \hspace{.1in} \text{where}  \hspace{.1in} A(w) = \left(\begin{matrix}0&1\\-\frac{1}{2}q(w)&0\end{matrix}\right).
\end{equation}
\medskip

In what follows we shall assume that $d$ is even, that is, the pole is of even order. The case of an odd order pole can be reduced to this by taking a double cover branched at the pole -- see \S5.3 of \cite{AllBrid} for details. 

Note that the gauge transformation
\begin{equation*}
 X = \left(\begin{matrix}1&0\\0&w^{d/2}\end{matrix}\right)Z
\end{equation*} 
converts the system to
\begin{equation}\label{linsys2}
Z^\prime(w) = w^{d/2}\left(\displaystyle\sum\limits_{k=0}^{\infty} B_k w^{-k} \right) Z(w)
\end{equation}
with the advantage that the leading order term $B_0 = \left(\begin{matrix}0&1\\-\frac{1}{2}&0\end{matrix}\right)$ is diagonalizable.

\medskip

An analysis of this linear system can then be carried out as in the work of Hsieh-Sibuya (\cite{H-S}); though they  consider the case where $q(w)$ is a polynomial, their analysis extends to our setting where $q(w)$ is holomorphic in a neighborhood of $\infty$ with a finite order pole at $\infty$. 

In fact, this more general setting is handled for linear systems of arbitrary rank by work of Balser-Jurkat-Lutz in \cite{B-J-L} (see also Chapter XIII of \cite{H-S-book}.  The following theorem can be culled from Theorem A of \cite{B-J-L}; see also Theorem 6.1 of \cite{Sib-book}, and the exposition in \cite{Bakk} and  \S5.3 of \cite{AllBrid}. 

\begin{thm}\label{linear}  There are $(d+2)$ sectors $S_k$ in $\C$ bounded by the rays at angles $\frac{\pi}{d+2}\cdot (2k \pm1)$ where $k \in \{0,1,2,\ldots, d+1\}$, and $(d+2)$ uniquely determined fundamental solutions ${Y}_k(w)$ to Equation \eqref{schw2} that
\begin{itemize}
\item[(a)] are holomorphic in a neighborhood $U$ of $\infty$,
\item[(b)] have an asymptotic expansion 
\begin{equation}\label{asymp}
Y_k(w) = cw^\rho\left(1+ O(w^{-1/2})\right)e^{(-1)^{k+1}E(w)} 
\end{equation}
in  $S_{k-1} \cup S_k \cup S_{k+1}$,  where $c$ and $\rho$ are some constants (that may depend on $k$), and $E(w)$ is a polynomial of degree $(d/2 +1)$. 
\item[(c)] are related by 
\begin{equation}\label{yky} 
Y_k(w) = Y_0(\omega^kw)
\end{equation}
where $\omega = e^{\frac{2\pi i}{d+2}}$.

\end{itemize} 

\end{thm}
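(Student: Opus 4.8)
\textit{Proof strategy.} The plan is to recognize Theorem~\ref{linear} as the rank-two instance of the classical asymptotic theory of linear ODEs at an irregular singular point, and to assemble it in three stages: a formal normal form, an analytic existence statement for the formal solution on narrow sectors, and a uniqueness statement on wide sectors. I would start from the prepared form \eqref{linsys2}, whose leading coefficient $B_0$ is diagonalizable with two \emph{distinct} eigenvalues $\pm\mu$; after a constant gauge transformation we may take $B_0=\operatorname{diag}(\mu,-\mu)$. The Hukuhara--Turrittin formal reduction then produces a formal fundamental matrix $\widehat Z(w)=\widehat P(w)\,w^{L}\,e^{Q(w)}$ in which $Q(w)=\operatorname{diag}\bigl(E(w),-E(w)\bigr)$ is diagonal --- distinctness of the eigenvalues of $B_0$ forcing the exponential part to decouple --- with $E$ a polynomial of degree $d/2+1$ (leading coefficient $\tfrac{\mu}{d/2+1}$, from $\int w^{d/2}\,dw$), $L$ a constant diagonal matrix, and $\widehat P(w)=I+(\text{a decaying formal series})$. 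Passing back through the gauge transformations of \S4.1 and extracting the $u$-component gives the scalar shape $c\,w^{\rho}\bigl(1+O(w^{-1/2})\bigr)e^{\pm E(w)}$ of \eqref{asymp}.

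The analytic core is the asymptotic existence theorem (Hukuhara, Turrittin, Sibuya; equivalently Theorem~A of \cite{B-J-L} and Theorem~6.1 of \cite{Sib-book}): on every sufficiently narrow sector at $\infty$ there is a genuine holomorphic fundamental solution having $\widehat Z$ as asymptotic expansion. The Stokes (separating) rays are the directions in which $e^{E(w)}$ and $e^{-E(w)}$ have equal modulus, i.e.\ where $\operatorname{Re}\bigl(\mu\,w^{d/2+1}\bigr)=0$ to leading order; since $\mu\neq0$ this harmonic polynomial vanishes asymptotically on $d+2$ equally-spaced rays, which --- after the correct choice of rotation --- are precisely the boundaries $\partial S_k$ at angles $\tfrac{\pi}{d+2}(2k\pm1)$. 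Each $S_k$ has opening $\tfrac{2\pi}{d+2}$, whereas $S_{k-1}\cup S_k\cup S_{k+1}$ has opening $\tfrac{6\pi}{d+2}$, strictly larger than the Gevrey threshold $\pi/(d/2+1)=\tfrac{2\pi}{d+2}$. Hence a holomorphic solution with the prescribed \emph{subdominant} asymptotics on such a triple union is unique: the difference of two would be flat (exponentially small) on a sector exceeding the threshold and therefore identically zero (a Watson-type argument). Selecting in each $S_k$ the solution recessive there pins down $Y_k$ uniquely, and its asymptotic series persists on the wider sector $S_{k-1}\cup S_k\cup S_{k+1}$ --- exactly as the expansion of the Airy function is valid well beyond its sector of recessiveness --- which gives parts (a) and (b); the constants $c,\rho$ are read off from $\widehat P$ and $L$.

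For part (c): in the model case $q(w)=-2w^{d}$, the substitution $w\mapsto\omega w$ with $\omega=e^{2\pi i/(d+2)}$ carries solutions of \eqref{schw2} to solutions, since $\omega^{2}q(\omega w)=\omega^{d+2}q(w)=q(w)$; it is biholomorphic near $\infty$ and cyclically shifts the Stokes sectors, $S_k\mapsto S_{k-1}$. Therefore $Y_0(\omega^{k}w)$ is a holomorphic solution of \eqref{schw2} with the asymptotic expansion \eqref{asymp} on $S_{k-1}\cup S_k\cup S_{k+1}$, and by the uniqueness above it coincides with $Y_k(w)$ --- this is \eqref{yky}. For a general $q$ as in \eqref{qexp}, the lower-order terms $\alpha_j w^{j}$ ($j<d$) affect neither the Stokes directions nor the leading exponential and power factors, so one reduces to the model case (or applies the rotation together with its induced action on the coefficients); I would follow \cite{Sib-book} (see also \cite{Bakk} and \S5.3 of \cite{AllBrid}) for this bookkeeping.

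\textbf{Main obstacle.} The real work is entirely in the asymptotic existence theorem, the precise identification of the Stokes sectors, and the uniqueness on the triple unions --- i.e.\ promoting the formal fundamental solution to genuine holomorphic solutions valid on the stated wide sectors. I would not reprove this but cite \cite{B-J-L} and \cite{Sib-book} (cf.\ \cite{H-S} for polynomial $q$, and Chapter~XIII of \cite{H-S-book}); the only point to check is that the present hypotheses fit their framework: rank two, $d$ even, $B_0$ diagonalizable with distinct eigenvalues, and --- the single ingredient not already in Hsieh--Sibuya --- that $q$ is merely holomorphic near $\infty$ with a pole of finite order there rather than an honest polynomial, an extension supplied by \cite{B-J-L}.
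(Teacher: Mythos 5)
Your proposal is correct and follows essentially the same route as the paper, which does not prove Theorem~\ref{linear} from scratch but culls it from Theorem~A of \cite{B-J-L} and Theorem~6.1 of \cite{Sib-book} after the same preparation (the gauge transformation making $B_0$ diagonalizable and the reduction of odd-order poles to even order). Your sketch simply supplies more of the classical scaffolding (Hukuhara--Turrittin normal form, subdominance/uniqueness on the triple sector, and the coefficient-rotation caveat in part~(c)) than the paper spells out.
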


\smallskip

\textit{Remark.} The sectors are often called \textit{Stokes sectors} and the  rays between the sectors are called \textit{Stokes rays}; we shall also refer to the rays at angles $\frac{2\pi}{d+2}\cdot k$ for $k \in \{0,1,2,\ldots, d+1\}$ to be the \textit{anti-Stokes rays}, that bound the \textit{anti-Stokes sectors} that we denote by $\widehat{S}_k$.  (The latter would later feature in aspects of the corresponding projective structures:  in particular, the developing map would be asymptotic to the crown-tips along the anti-Stokes rays, and its restriction to the anti-Stokes sectors would correspond to infinite-grafting on the geodesic sides of the crown.) \\

Note that one  consequence of the asymptotics of Equation \eqref{asymp} is that  for each $k \in \{0,1,2,\ldots, d+1\}$, the fundamental solution $Y_k(w) \to 0$ as $w\to \infty$ in $S_k$, whereas $Y_{k}(w) \to \infty$ as $w\to \infty$ in $S_{k\pm1}$. The solution $Y_k$ is said to be \textit{subdominant} in the Stokes sector $S_k$.\\

In particular, this shows that $Y_0(w)$ and $Y_{1}(w)$ are linearly independent solutions of Equation \eqref{schw2}, so we can consider the developing map for the corresponding projective structure it defines to be the ratio
\begin{equation}\label{devY}
f(w) = \frac{Y_0(w)}{Y_1(w)}.
\end{equation}

\medskip

In what follows, recall that an \textit{asymptotic value} of a meromorphic function as above, defined in a neighborhood of $\infty \in \C$, is the limiting value in $\cp$ (if it exists) of the function along a curve diverging to $\infty$.

As an immediate corollary of Theorem \ref{linear} and Equation \eqref{devY} we then obtain:

\begin{cor}\label{cor-dev} The developing map $f:U \to \cp$ as defined above is holomorphic in a neighborhood of $\infty$, and has $(d+2)$ asymptotic values $c_0, c_1,\ldots , c_{d+1} \in  \cp$, one in each sector $S_k$ , for $0\leq k \leq d+1$.

Moreover, in each anti-Stokes sector, there is the asymptotic expansion:
\begin{equation}\label{fexp} 
f(\xi) \sim e^{-2\xi}
\end{equation}
in the coordinate $\xi= w^{(d+2)/2}$.

(Here, Equation \eqref{fexp} means that  $\xi^m\left(f(\xi) - e^{-2\xi}\right) \to 0$  as $\xi \to \infty$, for each $m\geq 0$.)
\end{cor}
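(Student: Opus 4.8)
\textbf{Proof proposal for Corollary \ref{cor-dev}.}

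The plan is to read off everything from the asymptotic expansion \eqref{asymp} in Theorem \ref{linear}. First, I would establish holomorphy: since $Y_0$ and $Y_1$ are holomorphic in a common neighborhood $U$ of $\infty$ and $Y_1$ is subdominant precisely in $S_1$ (so in particular $Y_1$ does not vanish identically), the ratio $f = Y_0/Y_1$ is meromorphic on $U$; shrinking $U$ if necessary so that $Y_1$ has no zeros there except possibly a pole/zero structure dictated by the $cw^\rho$ prefactor, one checks $f$ is holomorphic (the leading powers $w^\rho$ cancel in the ratio, and the exponential factors are entire and nonvanishing). Next, the $(d+2)$ asymptotic values: fix $k$ and let $w\to\infty$ inside the Stokes sector $S_k$. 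There $Y_k$ is subdominant, so by \eqref{asymp} both $Y_k(w)$ decays like $e^{(-1)^{k+1}E(w)}$ with $\mathrm{Re}\bigl((-1)^{k+1}E(w)\bigr)\to-\infty$. Writing $f = Y_0/Y_1$ and using \eqref{yky} to relate $Y_0,Y_1$ to $Y_k$ (or directly comparing the two relevant solutions on $S_k$), the dominant/subdominant balance forces $f(w)$ to converge to a definite value $c_k\in\cp$ as $w\to\infty$ in $S_k$ — roughly, one of $Y_0,Y_1$ dominates the other (or they are comparable) in a way that makes the ratio tend to a limit. Running over $k=0,1,\ldots,d+1$ gives the $(d+2)$ asymptotic values $c_0,\ldots,c_{d+1}$, one per sector.

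For the sharper expansion \eqref{fexp}, I would work in an \emph{anti-Stokes} sector $\widehat S_k$ and change to the coordinate $\xi = w^{(d+2)/2}$, in which the leading polynomial $E(w)$ becomes (after absorbing the normalizing constant $c$ from Equation \eqref{qexp}, which is exactly why the factor $-2$ was chosen) linear: $E(w) = \xi + (\text{lower order in }\xi)$, so $e^{(-1)^{k+1}E(w)} = e^{\pm\xi}(1+O(\xi^{-1}))$. Substituting \eqref{asymp} for $Y_0$ and $Y_1$ into \eqref{devY}, the polynomial prefactors $cw^\rho$ cancel, the $(1+O(w^{-1/2}))$ factors contribute $(1+O(\xi^{-1/(d+2)}))$ — actually one must be careful and track these as $(1+O(w^{-1/2}))$, which in the $\xi$ variable is still $o(\xi^{-m})$ for every $m$ once one notes the error terms in \eqref{asymp} are genuinely asymptotic series (all negative powers), not just a single $O(w^{-1/2})$. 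This is the crux of making the strong statement ``$\xi^m(f(\xi)-e^{-2\xi})\to 0$ for all $m$'': I would invoke that the asymptotic expansions in Theorem \ref{linear}(b), coming from \cite{B-J-L} / \cite{Sib-book}, are asymptotic power series in $w^{-1/2}$ (Gevrey, even), so after forming the ratio and extracting the leading exponential $e^{-2\xi}$, the remainder is $e^{-2\xi}\cdot(\text{asymptotic series in }\xi^{-1}\text{ starting at order }\ge 1)$, whence each $\xi^m$ times it still tends to $0$ — and in the anti-Stokes sector $\mathrm{Re}(-2\xi)$ is bounded (not $\to-\infty$), so the $e^{-2\xi}$ factor does not spoil the polynomial weight.

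The main obstacle I anticipate is bookkeeping the exponents and the choice of which pair of solutions to compare in each sector — i.e.\ verifying that the constant in the exponent is exactly $-2$ (this is where the normalization $q = -2(w^d + \cdots)$ and the substitution $\xi = w^{(d+2)/2}$ must be reconciled precisely, via $E(w) = \int \sqrt{-q/2}\,dw$ up to lower-order terms) and that the $w^\rho$ and constant prefactors genuinely cancel in the ratio rather than leaving a residual constant multiplying $e^{-2\xi}$. If a residual nonzero constant $A$ survived, \eqref{fexp} would read $f\sim A e^{-2\xi}$; so part of the work is normalizing $Y_0, Y_1$ (using the uniqueness in Theorem \ref{linear} and relation \eqref{yky}) so that the constant is $1$. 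A secondary technical point is ensuring the asymptotic expansion \eqref{asymp}, a priori valid on the three-sector union $S_{k-1}\cup S_k\cup S_{k+1}$, in particular holds on the anti-Stokes sector $\widehat S_k$, which lies in the interior of such a union; this is immediate from the indexing of Stokes vs.\ anti-Stokes rays in the Remark following Theorem \ref{linear}.
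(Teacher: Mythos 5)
Your proposal is correct and follows essentially the same route as the paper: the paper likewise derives the corollary directly from the asymptotic expansion \eqref{asymp}, the relation \eqref{yky} and the definition \eqref{devY}, noting that the growth in each sector is governed by $\exp(E(w))$ whose leading term dictates the coordinate change $\xi = w^{(d+2)/2}$. The normalization issues you flag (the exact constant $-2$ in the exponent and the cancellation of prefactors) are handled in the paper only by deferring to Theorem 5.6.1 of \cite{Hille} and Theorem 3.2 of \cite{Arias}, where $\xi$ is identified with the natural coordinate $\int^w\sqrt{q}$, so your more explicit bookkeeping is a reasonable substitute for that citation.
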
 

\begin{proof}

The asymptotics in Equation \eqref{fexp} is an immediate consequence of Equations \eqref{asymp}, \eqref{yky} and \eqref{devY}; note that the growth in a sector is dominated by  $\text{exp}(E(w))$ and the leading order term of $E(w)$ is $w^{(d+2)/2}$, so the change of coordinate is $w \mapsto \xi= w^{(d+2)/2}$. In particular, along the {anti-}Stokes rays, we have $\text{Im}(w^{(d+2)/2}) =0$,  and $\text{Re}(w^{(d+2)/2}) \to \infty$. 

We remark that Equation \eqref{fexp} can also  be derived from \S5.6 (Theorem 5.6.1) of \cite{Hille} -- see also Theorem 3.2 of \cite{Arias}.
There, the coordinate $\xi$ is described to be the natural coordinate for the quadratic differential $q$ in Equation \eqref{qexp}, that is, $\xi = \int\limits^w \sqrt q$.  
\end{proof}

\subsection{Exponential map and infinite-grafting} 
The proofs in the following subsection shall rely on the following notions that include a  well-known geometric interpretation of the exponential function (\textit{c.f.} \cite{Bisw2}).

Consider the entire function $f:\C \to \cp$ given by the exponential map $f(z) = e^{2\pi i z}$.   As is well known, this is the uniformizing function for the logarithm function $g(z) = \ln(z)$. Such an entire function has $0$ as an asymptotic value, as can be seen by restricting $f$ to the imaginary axis.  Let $R$ be an embedded arc between $0$ and $\infty$. The domain $\C$ can be thought of as obtained from $\cp$ by taking countably infinite copies of $\cp \setminus R$ indexed by $\Z$, and identifying one side of the slit $R$ on the $i$-th copy with the other side of the slit $R$ in the $(i+1)$-th copy. This procedure will be referred to as attaching a \textit{logarithmic end} between $0$ and $\infty$.   The point $0$  (or $\infty$) in $\cp$ is said to be a \textit{logarithmic singularity}, or equivalently, the map $f$ is a branched cover over $\cp$ with infinite ramification (or branching) over the branch points $0$ and $\infty$. Note that these branch-points are precisely the asymptotic values of the map $f$ in the domain. 

We remark that a  \textit{$2\pi$-grafting} along an embedded arc $\alpha \subset \cp$ is obtained by attaching a copy of $\cp \setminus \alpha$ along a slit at $\alpha$, or alternatively, grafting in a lune of angle $2\pi$ (see \S2.2).  Given a projective structure on a surface $S$, such an operation along an embedded arc in the developing image does not change the holonomy representation  (see \cite{GolProj}).

\subsection{Grafting and the Schwarzian derivative}

We verify that $\widehat{Gr}$ is well-defined, that is:

\begin{prop}\label{gr-schw}  The grafting operation on a crowned hyperbolic surface  in $\TT_g(\mathfrak{n})$ along a measured lamination $\lambda$ on it (as defined in \S3.4) results in a projective structure $P$  on a punctured Riemann surface $X$ with a Schwarzian derivative that lies in $ \mathcal{P}_g(\mathfrak{n})$. 
\end{prop}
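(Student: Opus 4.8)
The plan is to reduce the question to a purely local computation near the puncture, since away from the crown ends the grafting construction is literally Thurston's classical grafting, which is known to produce a genuine projective structure with holomorphic Schwarzian. The global object produced by $\widehat{Gr}$ is, by construction, a projective structure on the punctured surface $X\setminus P$ (an atlas of charts to $\cp$ with M\"obius transitions): grafting, whether along a compactly supported lamination, a finite-weight arc exiting a boundary cusp, or an infinite-weight geodesic side of a crown, always produces a new developing image in $\cp$ equipped with the restricted projective structure, together with a deformed holonomy. So the content of the proposition is that the Schwarzian derivative of this projective structure, computed against the standard (uniformizing) structure on $X$, extends \emph{meromorphically} across each puncture with a pole of order exactly $n_i$, and that the resulting point of the space of meromorphic quadratic differentials lies in the stratum prescribed by $\mathfrak{n}$ (in particular, the number of horizontal directions, equivalently the number of boundary cusps, is correctly $n_i-2$). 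By the reduction in \S4, it suffices to treat a single crown with $n-2$ boundary cusps.

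\textbf{Key steps.} First I would describe a neighborhood of the puncture after grafting: it is obtained from a neighborhood of a boundary cusp / crown end of $\hat X$ by inserting $n-2$ half-planes (the conformal effect of infinite-grafting along each geodesic side, as noted at the end of \S3.4), together with finitely many lunes of finite angle coming from the finitely many finite-weight leaves of $\lambda$ entering the crown (Lemma \ref{lamcrown0}). Thus the developing image near the puncture is, up to finite-angle grafting which does not affect the order of the pole of the Schwarzian (finite-angle grafting changes the projective structure by a bounded, indeed analytic, amount near any interior point and contributes nothing singular at the puncture), a cyclic configuration of $n-2$ half-planes glued along $n-2$ geodesic boundary rays — precisely the developing picture of a model meromorphic projective structure. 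Second, I would identify this model with the one arising from $q = z^{-n}dz^2$ (equivalently the $q(w)$ of Equation \eqref{qexp} with $d=n-4$): by Corollary \ref{cor-dev} the developing map of that meromorphic structure is asymptotic to $e^{-2\xi}$ in the natural coordinate $\xi=\int^w\sqrt q$, and its behavior along the $d+2=n-2$ anti-Stokes rays / sectors is exactly an infinite-grafting on $n-2$ geodesic sides with $n-2$ distinct asymptotic values — so the two developing pictures agree up to a M\"obius transformation and isotopy, hence the Schwarzian of the grafted structure agrees to leading order with a quadratic differential having a pole of order $n$ with $n-2$ horizontal directions. Third, I would upgrade ``agrees to leading order'' to ``is genuinely meromorphic of order $n$'': the difference of the grafted projective structure and the standard structure is a holomorphic quadratic differential on the punctured disk; the asymptotic comparison with the model (whose Schwarzian is explicitly $-\tfrac12 q$ with $q$ as in \eqref{qexp}) forces this holomorphic quadratic differential to be $O(z^{-n})$ near $z=0$, and no worse, so by the Riemann removable-singularity / Laurent-expansion argument it extends meromorphically with a pole of order exactly $n$. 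Finally, I would check the marking data: the $n-2$ horizontal directions at the pole correspond under the developing map to the $n-2$ anti-Stokes rays, which correspond to the $n-2$ boundary cusps of the crown, matching the definition of the marking in Definitions \ref{mark} and \ref{hatT}; the real ``twist'' parameter of the crown end maps to the twist parameter recorded in $\hat{\mathcal{T}}_{g,k}$, so the image genuinely lies in $\mathcal{P}_g(\mathfrak{n})$.

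\textbf{Main obstacle.} The delicate point is the passage from the \emph{geometric/conformal} description of the grafted surface near the puncture (a cyclic gluing of half-planes and lunes along geodesics in the hyperbolic metric) to a \emph{complex-analytic} control of the Schwarzian derivative sharp enough to pin down the pole order as exactly $n$ and not merely $\le n$ or $\ge n-2$. This requires knowing that the half-plane-gluing model is conformally the \emph{standard} one — i.e. that the hyperbolic geodesic sides develop, after straightening, to circular arcs meeting at the right configuration, and that the conformal coordinate near the puncture is the expected $\xi = w^{(n-2)/2}$ — which is where the asymptotics of Theorem \ref{linear} and Corollary \ref{cor-dev} (the $e^{-2\xi}$ behavior along anti-Stokes rays, valid to all polynomial orders) do the real work: they let me match developing maps asymptotically and conclude that the grafted structure and the model differ by a projective transformation up to an error decaying fast enough that the Schwarzian difference is holomorphic, hence the model's pole order $n$ is inherited. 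Everything else — the behavior of finite-angle grafting, the interpretation of infinite-grafting as half-plane insertion, the combinatorics of $n-2$ sectors — is bookkeeping that has essentially been set up in \S2--\S3.
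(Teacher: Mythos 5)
There is a genuine gap, in two places. First, you assume rather than prove that the grafted crown end is conformally a punctured disk: the remark at the end of \S3.4 only asserts this and explicitly defers the proof to \S4.3, so citing it is circular. The paper's Lemma \ref{lem1-gr} establishes it by showing the annulus $A_\infty$ obtained by infinite-grafting has infinite modulus (subdividing into rectangles, comparing with the logarithmic-end model of the exponential map, and using a quasiconformal comparison) --- this is the analogue of Nevanlinna's parabolicity theorem and cannot be skipped, since without it one does not even know the grafted structure lives on a \emph{punctured} Riemann surface, let alone that its Schwarzian extends meromorphically.

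Second, and more seriously, your mechanism for controlling the Schwarzian is circular. Corollary \ref{cor-dev} describes the asymptotics of the developing map $Y_0/Y_1$ of a structure already known to have a meromorphic Schwarzian with a pole of order $n$; you invoke it to ``match developing pictures'' and conclude that the grafted structure's Schwarzian agrees with the model's up to a holomorphic error, but the grafted developing map is defined only synthetically (by gluing lunes and half-planes), and agreement of developing images ``up to M\"obius transformation and isotopy'' does not control the Schwarzian --- for that you need a genuine biholomorphic identification of domains near the puncture, which is exactly what is missing. The paper supplies the needed analytic handle differently (the Nevanlinna rational-approximation argument of Lemma \ref{lem2-gr}): approximate the infinite grafting by finite $2\pi N$-graftings, whose developing maps $f_N$ are branched covers with finite branching and hence have rational Schwarzians with poles of order at most two at the $(n-2)$ critical points, of polynomial growth uniform in $N$; uniform convergence $f_N\to f$ then gives that the Schwarzian of $f$ is holomorphic of fixed polynomial growth, hence meromorphic of finite order at the puncture. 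Only \emph{after} that is Corollary \ref{cor-dev} legitimately applied, in the reverse direction from yours: the grafted map has exactly $(n-2)$ asymptotic values by construction, which forces the pole order to be exactly $n$. You correctly located the obstacle, but the route you propose around it does not close.
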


\textit{Remark.} The fact that the grafting operation results in \textit{some} projective structure is a consequence of the definitions (see \S2.2); the above proposition identifies the space in which the resulting projective structure lies.\\

Let $\hat{S}$ be a crowned hyperbolic surface in $\TT_g(\mathfrak{n})$ . 
Recall that we have assumed, at the beginning of the section, that $\hat{S}$ has a single hyperbolic crown end with $(n-2)$ boundary cusps; we shall denote this crown by $\mathcal{C}$. (See \S3.3 for a description of a hyperbolic crown -- in particular, note that it is conformally an annulus of finite modulus.) 

The proof of Proposition \ref{gr-schw} is a local argument involving the grafting operation for this crown, and is an immediate consequence of the following two lemmas.

\begin{lem}\label{lem1-gr}
{The projective surface obtained by grafting $\hat{S}$ along $\lambda$ is conformally a punctured Riemann surface $X$.}
\end{lem}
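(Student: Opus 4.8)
The statement to be proved is that grafting a crowned hyperbolic surface $\hat S \in \TT_g(\mathfrak{n})$ along a measured lamination $\lambda$ yields a projective surface that is conformally a punctured Riemann surface $X$; since the construction is a local one near each crown end (all other leaves of $\lambda$ are compactly supported and grafting there manifestly preserves the conformal type away from the crown), the entire content is to show that each crown end, after infinite-grafting along its geodesic sides and finite-grafting along the finitely many leaves running into its boundary cusps, becomes conformally a punctured disk. The plan is therefore to fix a single crown end $\mathcal C$ with $m = n-2$ boundary cusps and to identify the conformal type of the grafted object $\widehat{\mathcal C}$.

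First I would recall (from \S3.3) that $\mathcal C$ is conformally a finite-modulus annulus, one boundary being the crown-boundary geodesic $\gamma$ and the other the chain of $m$ geodesic sides enclosing the $m$ boundary cusps. Lifting to the universal cover, a lift $\hat P$ of $\mathcal C$ is an ideal polygon region in $\HH^2$ (with ideal vertices accumulating only at the two endpoints of the lift of $\gamma$, by Lemma \ref{lamcrown0}), and the restriction of $\lambda$ to it is a locally finite collection of geodesic arcs together with the geodesic sides carrying infinite weight. The key geometric input, established in \S4.2, is that grafting along an infinite-weight geodesic side is an \emph{infinite-grafting}, i.e.\ attaching a logarithmic end: conformally an infinite concatenation of lunes is a half-plane, and attaching a half-plane across each geodesic side glues the $m$ plaques between consecutive cusps into a chain that develops, under the developing map, to a neighborhood of a logarithmic singularity. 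Doing the finite-grafting along the interior leaves first only inserts finitely many round lunes (bounded crescents in $\cp$), which does not change the conformal type of the relevant region — it is a finite operation on a surface with boundary. So up to a quasiconformal (indeed conformal, away from the lunes) change, the grafted crown $\widehat{\mathcal C}$ is conformally obtained from the annulus $\mathcal C$ by attaching $m$ logarithmic ends along the $m$ geodesic sides.

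Next I would show that this operation turns the annulus into a punctured disk. The cleanest way is to pass to the developing image: after straightening, the plaques on the crown side develop into a polygon chain in $\cp$ whose successive sides are identified through the logarithmic ends, and the $2\pi$-rotational symmetry built into the crown (the cyclic ordering of the $m$ cusps around $\gamma$) means that, in a suitable coordinate $\xi$, the development near the crown is asymptotic to $e^{-2\xi}$ — precisely the local model of Corollary \ref{cor-dev} with $d+2 = m$. In that coordinate the $m$ anti-Stokes sectors are identified cyclically by the logarithmic-end gluings, and the resulting quotient is conformally a punctured disk: concretely, the coordinate $z = e^{2\xi/m}$ (or equivalently $w$ in the earlier notation, with $w \mapsto \infty$ at the puncture) is single-valued on $\widehat{\mathcal C}$ and realizes it as a punctured neighborhood of $\infty$. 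Gluing this punctured-disk neighborhood to the rest of the grafted surface (which is a compact bordered Riemann surface with one boundary circle, namely the image of $\gamma$) along $\gamma$ produces a closed Riemann surface minus one point, i.e.\ the punctured Riemann surface $X$. Running this for each of the $k$ crown ends gives $X$ with $k$ punctures.

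The main obstacle I anticipate is making rigorous the claim that ``an infinite concatenation of lunes glued along the geodesic sides of a crown is conformally a logarithmic end, and the resulting object is a punctured disk'' — in other words, controlling the conformal type of an \emph{infinite} grafting in the presence of the finite arcs running into the cusps and of the nontrivial boundary twist. The finite-grafting leaves can spiral infinitely around $\gamma$ (that is exactly the integer twist data), so before taking the conformal limit one must check that the infinite twisting, combined with proportionally decreasing weights, does not destroy the half-plane/logarithmic-end picture; the correct way to handle this is to note (as in the proof of Proposition \ref{lamcrown}) that the twisting can be absorbed into the choice of grafting arcs inside $\mathcal C$, so that each geodesic side still contributes exactly one half-plane, and then to invoke the explicit exponential model. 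I would isolate this as the content of the companion Lemma \ref{lem2-gr} (on the Schwarzian/asymptotics side) and keep the present lemma to the purely conformal statement, citing \S4.2 for the logarithmic-end identification and Corollary \ref{cor-dev} for the local model.
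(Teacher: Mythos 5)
There is a genuine gap at the heart of your argument: the step that actually identifies the conformal type of the grafted crown rests on Corollary \ref{cor-dev}, and that corollary is not available here. Corollary \ref{cor-dev} describes the asymptotics ($f(\xi)\sim e^{-2\xi}$ in anti-Stokes sectors) of the developing map of a projective structure that is \emph{already known} to come from a meromorphic quadratic differential with a pole of order $n$, via the ODE analysis of Theorem \ref{linear}. In Lemma \ref{lem1-gr} the object at hand is a surface built synthetically by grafting; one does not yet know that its Schwarzian is meromorphic of the right order -- that is exactly Lemma \ref{lem2-gr}, whose proof in the paper in turn uses the exhaustion established in the proof of Lemma \ref{lem1-gr}. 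So asserting that ``in a suitable coordinate $\xi$ the development near the crown is asymptotic to $e^{-2\xi}$'' and that ``$z=e^{2\xi/m}$ is single-valued and realizes $\widehat{\mathcal C}$ as a punctured neighborhood of $\infty$'' presupposes the conclusion; deferring the difficulty to Lemma \ref{lem2-gr}, as you propose at the end, makes the two lemmas circular. Your correct observation that each geodesic side contributes a half-plane / half-logarithmic end describes the construction but does not by itself show that attaching these ends to a finite-modulus annulus produces an annulus of \emph{infinite} modulus.

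The paper closes this gap with a direct extremal-length argument that needs no ODE input: it subdivides the infinitely grafted strip $\widetilde{A_\infty}$ into rectangles $R_j^i$ by arcs from the crown tips, and shows that any two adjacent rectangles $R_j\cup R_{j+1}$ have infinite modulus. This is done by comparison with the one explicit model where the answer is known -- the exponential map, where attaching a logarithmic end along a slit in a round disk $D$ visibly yields $\C\setminus D^c$, a punctured disk -- via a quasiconformal map from the model pair $R_a\cup R_b$ to $R_j\cup R_{j+1}$ that is conformal on the infinitely grafted ends (so the infinite modulus is preserved). If you want to repair your write-up, replace the appeal to Corollary \ref{cor-dev} with this modulus comparison (or with an appeal to Nevanlinna's parabolicity criterion, as the paper does in the genus-zero setting of \S5); the rest of your reduction to a single crown end, and the observation that the finitely many finite-weight lunes and the boundary twist do not affect the conformal type, is consistent with the paper.
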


\textit{Remark.} Note that $X$ is in fact of genus $g$ and a single puncture, that is, has the same topology as the crowned hyperbolic surface (see the second statement of Theorem \ref{thu-con}).

\begin{proof}[Proof of Lemma \ref{lem1-gr}] 

It suffices to check that the crown end $\mathcal{C}$ (that is topologically an annulus)  is conformally a punctured disk after the grafting; in the rest of the proof we shall focus entirely on this crown end.

Recall that the grafting lamination $\lambda$ intersects $\mathcal{C}$ along finitely many isolated leaves of finite weight, that are either between the boundary cusps of the crown, or from a boundary cusp to the closed geodesic boundary of $\mathcal{C}$. We denote the collection of these geodesic leaves of finite weight intersecting $\mathcal{C}$ by ${\lambda}_C$. More importantly for us, $\lambda$ includes the geodesic sides $\gamma_1, \gamma_2,\ldots \gamma_{n-2}$ of the crown boundary, each with infinite weight. \\

It shall be useful to pass to the universal cover. That is, consider the universal cover   $\widetilde{\mathcal{C}}$ of the hyperbolic crown as a $\mathbb{Z}$-invariant domain in  $\D \subset \cp$, and  perform a grafting along the lifted lamination $\widetilde{\lambda_C}$,  together with the $\mathbb{Z}$-invariant collection of the  lifts $\mathcal{G} = \{\tilde{\gamma}_1^i, \tilde{\gamma}_2^i, \ldots , \tilde{\gamma}_{n-2}^i\}_{i\in \mathbb{Z}}$ each with weight $\infty$.

The advantage is that the grafting here admits a  synthetic-geometric description similar to that in the previous section:

Namely, \\
(i) along each of the leaves of $\widetilde{\lambda_C}$ we insert a ``straight lune" of angle equal to the corresponding weight (see \S2.2), and \\
(ii) for each arc $\tilde{\gamma}\in \mathcal{G}$, we take countably infinite copies of $\cp$, each with a slit along $\tilde{\gamma}$, determining sides $\tilde{\gamma}_+^j$ and $\tilde{\gamma}_-^j$ on the $j$-th copy, where $j\in \mathbb{Z}_{\geq 0}$, and identify $\tilde{\gamma}$ (on the original domain) with $\tilde{\gamma}_+^1$, and then  successively identify $\tilde{\gamma}_-^j$ with $\tilde{\gamma}_+^{j+1}$ for each $j$. 

 (The fact that it is an infinite chain indexed by \textit{non-negative} integers, instead of a bi-infinite chain, is used in the proof of the final claim.) \\

Topologically, this appends an open half-disk along each boundary arc $\tilde{\gamma} \in \mathcal{G}$ in the original domain, and results in a conformal (immersed) domain  $\widetilde{A_\infty}$ in $\cp$.  See Figure 5.
As in \S2.2,   $\widetilde{A_\infty}$ is invariant under a cyclic subgroup of $\pslc$ generated by a new M\"{o}bius transformation, and yields  a conformal annulus $A_\infty$ in the quotient.

\begin{figure}
	\centering
	\includegraphics[scale=0.4]{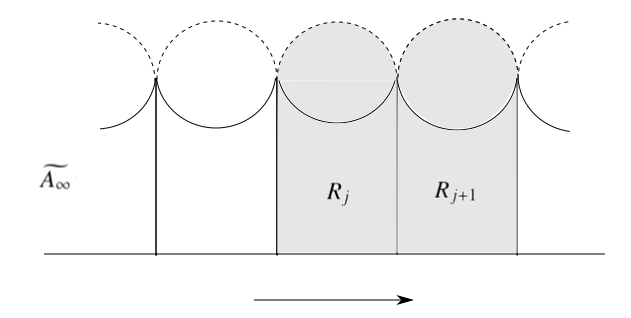}\\
	\caption{The surface $\widetilde{A_\infty}$ is obtained by infinite grafting along the circular arcs in the lift of the crown in $\cp$. This infinite grafting adds in a topological half-disk, denoted by the dotted lines.}
\end{figure}

 We have to show that the conformal modulus of $A_\infty$ is infinite, i.e.\ $A_\infty$ is biholomorphic to  $\D^\ast$.
 Equivalently, we need to show that $\widetilde{A_\infty}$ is biholomorphic to the upper half-plane.\\

Divide the strip $\widetilde{A_\infty}$  into topological rectangles by circular arcs from the crown-tips (starting points of  $\tilde{\gamma}_j^i$ for $j=1,2,\ldots (n-2)$ and $i\in \mathbb{Z}$. We denote these rectangles by $R_1^i,R_2^i,\ldots R_{n-2}^i$.

It suffices to show:\\

\textit{Claim. The conformal modulus of the union of any pair of adjacent rectangles in the above subdivision, call it $R_j \cup R_{j+1}$, is infinite.}\\
\textit{Proof of claim.} 
Here, we appeal to the grafting description for the exponential map described in \S4.2. Recall that here, a \textit{logarithmic end} comprising a bi-infinite chain of copies of $\cp \setminus \gamma $ is attached along a choice of an embedded arc $\gamma$ between the two branch-points of infinite order, $0$ and $\infty$, on $\cp$. Moreover, we know that the resulting Riemann surface is bi-holomorphic to $\C$.  Let  $D$ be a round disk in $\cp$ properly containing the arc $\gamma$ and its endpoints, and let $D^c = \cp \setminus D$ be the complementary disk. It follows  that the Riemann surface is obtained by  attaching the logarithmic end along $\gamma$ to $D$ is biholomorphic to $\C \setminus D^c$, which is conformally a punctured disk.

Observe that attaching a logarithmic end along $\gamma$ is equivalent to introducing a slit along $\gamma$, and then performing an infinite-grafting along the resulting two sides $\gamma_+$ and $\gamma_-$.  Here, we use the fact that an infinite-grafting along a side adds on a chain of copies of $\cp$ index by \text{non-negative} integers; thus, infinite-grafting along the two sides of the slit introduces a bi-infinite chain of $\cp$s, i.e.\ a logarithmic end. See Figure 6.

\begin{figure}[h]
	\centering
	\includegraphics[scale=0.47]{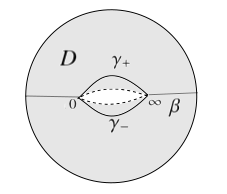}\\
	\caption{The surface obtained by infinite grafting along the two sides of a slit along $\gamma$ on a disk $D$. The rectangle $R^\prime = D \setminus \beta$ is quasiconformally related to $R_j \cup R_{j+1}$ (see Figure 5).}
\end{figure}

Pick two circular arcs from $0$ and $\infty$ respectively, to the boundary of $D$, intersecting $\partial D$ orthogonally. If we slit along one of the arcs, call it $\beta$,  we get a topological rectangle $R^\prime$, that is sub-divided into two rectangles $R_a$ and $R_b$ by the other arc. From the above discussion, the rectangle $R^\prime$ has infinite modulus, as the surface obtained by identifying the sides of the rectangle (the two sides of the slit $\beta$) is conformally a punctured disk.

Finally, note that one can easily build a quasiconformal map from $R_a \cup R_b$ to $R_j \cup R_{j+1}$; in fact, we can do so by a map that is conformal on the ends obtained by the infinite grafting on the sides.  Thus, $R_j \cup R_{j+1}$ is also a rectangle of infinite modulus, as claimed. 
$\qed$\\

This completes the proof of the Lemma.
\end{proof}

\medskip

Now let $U \cong \mathbb{D}^\ast$ be a neighborhood of the puncture on $X$ that corresponds to the crown end $\mathcal{C}$ after grafting.  In what follows we shall think of $U$ as a region $\{ \lvert z \rvert > 1\} \subset \C$.  The developing map $f:\widetilde{X} \to \cp$ for the projective structure $P$, when restricted to a lift $\tilde{U}$ of $U$, is a map  equivariant with respect to the action of $\pi_1(U) = \mathbb{Z}$ on the domain, and the cyclic monodromy around the puncture, in the target.

To complete the proof of Proposition \ref{gr-schw}, it suffices to show:

\begin{lem}\label{lem2-gr} 
{The Schwarzian derivative of $f\vert_{\tilde{U}}$ descends to a meromorphic quadratic differential on $U$ with a pole of order $n$ at the puncture.}
\end{lem}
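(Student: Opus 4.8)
The plan is to note that the descent is automatic and then to read off the order of the pole from the exponential asymptotics of the developing map produced by the grafting construction. \emph{Step 1 (descent).} Since the developing map is a local diffeomorphism, $f^\prime$ never vanishes, so the Schwarzian derivative $S(f)$ of $f|_{\tilde U}$ --- the expression in \eqref{schwd} --- is a holomorphic quadratic differential on $\tilde U$. Identifying $\tilde U$ with a half-plane as in the proof of Lemma \ref{lem1-gr}, the deck group $\pi_1(U)\cong\Z$ acts by a Möbius transformation, and $f$ is equivariant with respect to this action and the cyclic monodromy around the puncture. Applying the cocycle identity $S(g\circ h)=(S(g)\circ h)\,(h^\prime)^2+S(h)$ and the vanishing of the Schwarzian on $\pslc$, one gets that $S(f)$ is invariant under the deck group, hence descends to a holomorphic quadratic differential $q$ on $U\cong\D^\ast$. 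It then remains only to identify the singularity of $q$ at the puncture.

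\emph{Step 2 (asymptotics of the developing map).} Here I would extract, from the proof of Lemma \ref{lem1-gr}, the local picture of the developed image near the puncture (taking $U=\{|z|>1\}$ with the puncture at $z=\infty$). The crown boundary is glued along its $(n-2)$ geodesic sides, each infinitely grafted; so near $z=\infty$ the developed image contains $(n-2)$ logarithmic ends arranged cyclically, one along the developed image of each geodesic side, while the finitely many finite-weight leaves of $\lambda_C$ and the remaining ``compact'' part of the crown contribute only a region that stays bounded away from $\infty$ along each anti-Stokes direction. By the discussion in \S4.2, the developing map on each logarithmic end is, up to pre- and post-composition by Möbius transformations, the exponential map, whose two branch values are the developed endpoints of the corresponding geodesic side --- that is, the two crown tips flanking it. As $(n-2)$ such ends wind once around the puncture, the coordinate that unfolds one angular sector at $z=\infty$ onto the half-plane carrying this exponential model is $\xi\sim c\,z^{(n-2)/2}$; and after a biholomorphism of a punctured neighbourhood of the puncture onto a neighbourhood of $\infty$ carrying these ends onto the $(n-2)$ anti-Stokes sectors, one recovers exactly the asymptotics of Corollary \ref{cor-dev}: $f(\xi)\sim e^{-2\xi}$ on each anti-Stokes sector, in the sense of \eqref{fexp}, with $f$ limiting to the crown tips along the Stokes directions in between.

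\emph{Step 3 (order of the pole).} The Schwarzian of $e^{-2\xi}$ in the variable $\xi$ is the constant $-2$, so the cocycle identity together with $\xi\sim c\,z^{(n-2)/2}$ gives
\[
q(z)=S(f)(z)=-2(\xi^\prime)^2+S(\xi)(z)\ \sim\ -\tfrac12\,c^2(n-2)^2\,z^{\,n-4}
\]
as $z\to\infty$, since $(\xi^\prime)^2\sim\tfrac14\,c^2(n-2)^2\,z^{\,n-4}$, $S(\xi)(z)=O(z^{-2})$, and the correction from $f$ not being exactly $e^{-2\xi}$ is of smaller order than every power of $z$. Thus $|q(z)|=O(|z|^{\,n-4})$ on finitely many slightly enlarged anti-Stokes sectors covering a punctured neighbourhood of $\infty$; writing $\tilde z=1/z$, the coefficient $\tilde z^{-4}q(1/\tilde z)$ of $q$ in the coordinate $\tilde z$ is then $O(|\tilde z|^{-n})$, so $\tilde z^{\,n}\cdot\tilde z^{-4}q(1/\tilde z)$ is bounded near $\tilde z=0$ and extends holomorphically by Riemann's theorem; hence $q$ has a pole of order at most $n$ at the puncture, and the leading coefficient $-\tfrac12 c^2(n-2)^2\neq0$ shows the order is exactly $n$. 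I expect the real work to be in Step 2 --- turning the synthetic grafting picture into the strong exponential asymptotics used above, which requires controlling the cusp biholomorphism $\tilde U\cong\mathbb{H}$, matching it with a coordinate adapted to an order-$n$ pole, and checking that the finite-weight part of the lamination is invisible at $\infty$ along the anti-Stokes rays; Steps 1 and 3 are routine.
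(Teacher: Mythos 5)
There is a genuine gap, and it sits exactly where you suspect: Step 2. You want to read off the strong asymptotics $f(\xi)\sim e^{-2\xi}$ with $\xi\sim c\,z^{(n-2)/2}$ from the synthetic grafting picture, and you justify this by saying one ``recovers exactly the asymptotics of Corollary \ref{cor-dev}.'' But Corollary \ref{cor-dev} is a consequence of the linear ODE theory of \S4.1 applied to a quadratic differential that is \emph{already known} to be meromorphic with a pole of order $n$ at the puncture --- which is precisely the conclusion of the lemma. Invoking it here is circular. What you would actually need is a quantitative statement about the uniformization of the infinitely-grafted crown: that the biholomorphism $\tilde U\cong\mathbb{H}$ (whose existence is all that Lemma \ref{lem1-gr} gives) carries the $(n-2)$ logarithmic ends onto sectors in such a way that the natural coordinate of each exponential model is asymptotic to $c\,z^{(n-2)/2}$, with enough control on derivatives to compute $(\xi')^2$ and $S(\xi)$. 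Nothing in the qualitative statement ``$A_\infty$ has infinite modulus'' delivers this; the uniformizing map could a priori have non-power-law corrections that enter the Schwarzian at the same order. (A smaller inaccuracy: even granting the asymptotics, the error in $q(z)$ is not ``smaller than every power of $z$'' --- the subleading terms of $\xi$ and the $w^\rho$ factor contribute at orders $z^{n-5},z^{n-6},\dots$ --- though this does not affect the pole order.)

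The paper's proof avoids this entirely by Nevanlinna's rational-approximation scheme. One replaces the infinite grafting on each geodesic side by a $2\pi N$-grafting, obtaining annuli $A_N$ exhausting $A_\infty$; the developing map $f_N$ is then a conformal immersion with finite-order branching at $(n-2)$ boundary points, so its Schwarzian is explicitly a meromorphic differential with poles of order at most two at those critical points, hence of polynomial growth \emph{uniform in $N$} on the interior. Uniform convergence $f_N\to f$ on compacta gives convergence of Schwarzians, so $S(f)$ descends to a meromorphic differential on $U$ with a pole of \emph{some} finite order at the puncture. Only at that point does the ODE theory become available: the order of the pole is then pinned to $n$ by counting the $(n-2)$ asymptotic values that the construction visibly produces. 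If you want to salvage your route, you would have to prove the uniformization asymptotics of Step 2 independently (this is essentially Nevanlinna's theory of the type problem for such surfaces); as written, the argument assumes its conclusion.
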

\begin{proof}
Our proof is an adaptation of the ``rational approximation" argument of  Nevanlinna -- see \S3.4 of \cite{Nevanlinna}, and also the proof of Theorem 40.1 in \cite{Sib-book}.  

Consider the sequence of conformal annuli $A_N$ for $N\geq 1$ obtained by grafting $\mathcal{C}$ along $\lambda_C$, together with a  $2\pi N$-grafting on each of the geodesic sides $\gamma_1,\gamma_2, \ldots \gamma_{n-2}$ of the crown boundary.

It follows from the proof of Lemma \ref{lem1-gr} that $A_N$ form an exhaustion of $A_\infty$, that is,  $A_N \subset A_{N+1}$ for each $N$, such that $\text{mod}(A_N) \to \infty$ as $N\to \infty$ and  $A_\infty = \bigcup\limits_{N\geq 1} A_N$.

In particular, for any compact subset $\Omega \subset \tilde{U}$ there is a sufficiently large integer $N_0$ such that $\Omega$   is strictly contained in $\widetilde{A_N}$ for all $N\geq N_0$. Then the restriction $f\vert_{\Omega}$ is then the uniform limit of a subsequence of the corresponding developing maps $ f_N\vert_{\Omega}:\Omega \to \widetilde{A_N}$ where $N\geq N_0$. Recall that each $\widetilde{A_N}$ is conformally immersed in $\cp$, and by our construction $f_N$ is a conformal immersion to $\cp$ with order-$2N$ branching at the $\mathbb{Z}$-invariant collection of points where the lifts of two adjacent sides of the crown end meet. A simple calculation then shows that the Schwarzian derivative of $f_N$  is then of the form $\phi_N(z) dz^2$ where $\phi_N$ is a meromorphic function with poles of order at most two at the $(n-2)$ critical points that map to the branch-points of finite order. Thus, the restriction of $f$ to the interior of $A_N$, and in particular $f_N\vert_{\Omega}$ for $N\geq N_0$,  is a locally univalent holomorphic function since the critical points lie on the boundary of $A_N$. Moreover, since the number of poles of order two does not depend on $N$, this holomorphic function is of fixed polynomial growth that does not depend on $N$.

 By the uniform convergence $f_N \to f$ on $\Omega$,  these Schwarzian derivatives converge uniformly to the Schwarzian derivative of $f\vert_{\Omega}$, which is then of the form $\phi(z)dz^2$ where $\phi$ is a holomorphic function on $\Omega$ of a fixed polynomial growth that does not depend on $\Omega$.
 
By the usual invariance of the Schwarzian derivative under post-composition by M\"{o}bius maps,  this Schwarzian derivative  of $f\vert_{\tilde{U}}$ descends to a meromorphic quadratic differential on $U$. The polynomial growth condition then implies that it has at most a finite order pole at the puncture. 

The fact that the order of the pole is exactly $n$ follows from the discussion in \S4.1:

From our description of $\widetilde{A_\infty}$ in the proof of Lemma \ref{lem1-gr}, each fundamental region determines exactly $(n-2)$ infinitely-branched points, and thus the developing map $f\vert_{\tilde{U}}$ has exactly $(n-2)$ asymptotic values. From  Corollary \ref{cor-dev}, in the expression $\phi(z)dz^2$ for the Schwarzian derivative as expressed in $U = \{ \lvert z\rvert >1\}$, the rational function $\phi(z)$ would have polynomial growth of order exactly $(n-4)$, and thus the Schwarzian derivative has a pole of order $n$ at the puncture. 
\end{proof}

\subsection{Inverse of the grafting map}

Let $P$ be a meromorphic projective structure in $\mathcal{P}_g(\mathfrak{n})$, and let $\widetilde{P}$ denote the universal cover. 
Recall that the Thurston construction (see Theorem \ref{thu-con}) applied to $\widetilde{P}$ would yield the Poincar\'{e} disk $\mathbb{D}$ and a measured lamination $L$ on it. Recall that $L$  is the grafting lamination for $\widetilde{P}$.

In this subsection we shall prove the following Proposition, which says that the {\it image of the inverse of the grafting map lands in $\TT_g(\mathfrak{n})  \times {ML}_g(\mathfrak{n})$}.

\begin{prop}\label{inv}  For $\widetilde{P}$ as above,  the pair $(\mathbb{D}, {L})$ obtained from Theorem \ref{thu-con} is the universal cover of a pair $(X,\lambda) \in  \TT_g(\mathfrak{n})  \times {ML}_g(\mathfrak{n})$.
\end{prop}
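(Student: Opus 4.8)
The plan is to work entirely in the neighborhood of the puncture, since away from the crown ends the statement is a consequence of Theorem \ref{thu-con}; in particular, the second statement of Theorem \ref{thu-con} already gives that the quotient $\D/\Gamma$ is homeomorphic to the underlying surface $S$, so $\mathbb{D}$ covers \emph{some} hyperbolic surface $X_0$ and $L$ descends to \emph{some} measured lamination. What must be established is that the hyperbolic surface so obtained has the correct behavior near the puncture, namely a crown end with exactly $(n-2)$ boundary cusps, that the twist data makes sense, and that the descended measured lamination is of the type parametrized in Proposition \ref{mln} (finitely many isolated leaves of finite weight through the crown, together with the $(n-2)$ geodesic sides of infinite weight). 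The starting point is Corollary \ref{cor-dev}: the developing map $f$ of $\widetilde{P}$, restricted to a lift $\tilde U$ of a punctured-disk neighborhood $U$ of the puncture, has exactly $(d+2) = (n-2)$ asymptotic values $c_0,\dots,c_{d+1}$, one per Stokes sector, and along each anti-Stokes ray $f$ is asymptotic to $e^{-2\xi}$ in the natural coordinate $\xi = \int^w\sqrt q$.

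First I would analyze the maximal disks of $\widetilde P$ near the puncture and identify the pleated-surface picture coming from Thurston's construction. The asymptotic expansion $f(\xi)\sim e^{-2\xi}$ in each anti-Stokes sector says that, in that sector, the developing image wraps infinitely around one of the asymptotic values, exactly as in the exponential-map / logarithmic-end picture of \S4.2; so a family of maximal disks in that sector shares a pair of common ideal boundary points (the image of the puncture and the relevant asymptotic value) and collapses under the map $\Psi$ of Theorem \ref{thu-con} to a single bi-infinite geodesic carrying infinite bending — this produces one geodesic side of the crown. There are $(n-2)$ anti-Stokes sectors, hence $(n-2)$ such geodesic sides, with the $(n-2)$ asymptotic values $c_k$ serving as the ideal endpoints (``crown-tips'') between consecutive sides; this is the crown end. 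Between the infinite-weight sides, the finitely many Stokes directions and the remaining pleating locus contribute finitely many leaves of finite bending measure, since the transverse measure of any transversal to the crown is finite (this is the content of Lemma \ref{lamcrown0}, applied to $L$). I would make the ``finitely many finite-weight leaves'' claim rigorous by noting that the bending cocycle picks up a bounded elliptic contribution along any compact transversal near the puncture, because $f$ restricted to $\tilde U$ has a well-defined polynomial-growth Schwarzian (that $P\in\mathcal P_g(\mathfrak n)$), ruling out an accumulation of bending.

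Next I would descend to the quotient. The monodromy of $\widetilde P$ around the puncture is, up to the crown-grafting contribution, the holonomy of the crown boundary geodesic; here I would use the equivariance in Theorem \ref{thu-con}, together with the fact that the $(n-2)$ infinite-weight sides and the $c_k$'s are permuted cyclically by the peripheral monodromy (this is precisely relation \eqref{yky}, $Y_k(w)=Y_0(\omega^k w)$), to conclude that the peripheral subgroup $\langle$ peripheral element $\rangle$ acts on $\mathbb D$ with the geodesic lamination $\LL\cap(\text{neighborhood})$ invariant, and that the quotient of a half-neighborhood of the relevant limit geodesic is exactly a hyperbolic crown with $(n-2)$ boundary cusps glued along its crown boundary — i.e.\ an element of $\TT_g(\mathfrak n)$. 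The twist parameter $\tau$ at the crown is read off from the position, relative to the canonical basepoint of \S3.2, where the descended finite-weight leaves meet the crown boundary geodesic; this is finite precisely because only finitely many finite-weight leaves are involved. Combining the crown data with the surface-with-boundary picture away from the puncture (and checking the transverse measures on the common boundary match, as in the proof of Proposition \ref{mln}) gives a well-defined pair $(X,\lambda)\in\TT_g(\mathfrak n)\times {ML}_g(\mathfrak n)$ whose universal cover is $(\mathbb D, L)$.

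The main obstacle I expect is the careful translation of the analytic asymptotics of Corollary \ref{cor-dev} into the synthetic statement that the pleating locus of $\widetilde P$ near the puncture consists of exactly $(n-2)$ infinite-weight geodesics plus finitely many finite-weight leaves, and nothing else — in particular, ruling out pathological pleating accumulating on the crown, and verifying that the infinite bending along each of the $(n-2)$ distinguished geodesics is genuinely infinite (so that the quotient is a crown and not, say, a funnel or a cusp). This requires marrying the exponential/logarithmic-end model of \S4.2 with Thurston's maximal-disk construction: concretely, showing that in each anti-Stokes sector the maximal disks through points of $\tilde U$ are, up to bounded error governed by the $O(w^{-1/2})$ term in \eqref{asymp}, the osculating round disks of the exponential map, whose pleated image is the logarithmic end. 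Once that identification is in place, the counting of sides, the finiteness of the interior leaves, and the finiteness of the twist parameter all follow, and the descent to $\TT_g(\mathfrak n)\times {ML}_g(\mathfrak n)$ is essentially bookkeeping.
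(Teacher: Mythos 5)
Your overall strategy is the same as the paper's: use Corollary \ref{cor-dev} and the exponential-map model of \S4.2 to show that in each anti-Stokes sector the developing image is a half-logarithmic end, so that the family of round disks touching the two relevant asymptotic values collapses, under Thurston's construction (Theorem \ref{thu-con}), onto a single geodesic carrying infinite bending; this produces a bi-infinite chain of infinite-weight geodesics invariant under the peripheral monodromy, which descends to a crown end with $(n-2)$ boundary cusps. One correction of detail: the pair of common ideal boundary points shared by the collapsing family of disks in an anti-Stokes sector is the pair of \emph{consecutive} asymptotic values $c_j, c_{j+1}$ (the two crown tips joined by that geodesic side), not ``the image of the puncture and the relevant asymptotic value''; your later sentence about the $c_k$'s being the endpoints between consecutive sides has it right, but the parenthetical contradicts it.

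The genuine gap is the step you flag as an ``obstacle'' but never carry out: showing that the peripheral element of the straightened Fuchsian group $\Gamma$ is hyperbolic, so that the quotient of the invariant chain of geodesics is a crown (with a closed geodesic boundary) and not a cusp neighborhood. This does not follow from the bending along each side being genuinely infinite, nor from identifying maximal disks with osculating disks of the exponential map; it is a statement about $\rho_\R$ and needs a separate argument. The paper's argument: if the peripheral element $h$ were parabolic, the $\langle h\rangle$-invariant chain of geodesics would have all its endpoints accumulating at the single fixed point $p$ of $h$; conjugating by $g\in\pi_1(S)$ yields another such chain accumulating at $g.p$, and if $g.p\neq p$ the two chains, both contained in the grafting lamination, would have to cross, contradicting disjointness of leaves. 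Hence $g.p=p$ for all $g$, forcing $\Gamma$ to be elementary, which contradicts $\D/\Gamma$ being homeomorphic to a surface with non-abelian fundamental group. Without this (or an equivalent) argument you have not placed $X$ in $\TT_g(\mathfrak{n})$. Relatedly, invoking Lemma \ref{lamcrown0} for the finiteness of the finite-weight leaves is legitimate only \emph{after} the quotient end is known to be a crown, so the logical order must be: hyperbolicity of the peripheral element first, then the crown structure, then Lemma \ref{lamcrown0}.
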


\begin{figure}
  \centering
  \includegraphics[scale=0.5]{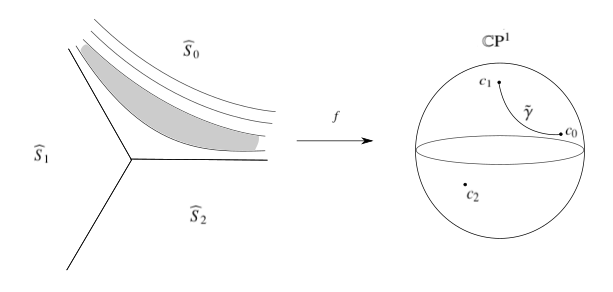}\\
  \caption{ A circular arc between a pair of consecutive asymptotic values of $f$ (see figure on right) has infinitely many pre-images in a sector (see figure on left). Each region between these pre-images (one shown shaded) map to a copy of $\cp \setminus \tilde{\gamma}$; the image of the sector thus wraps infinitely many times around $\cp$.}
\end{figure}

\begin{proof}

 For ease of notation, we shall continue with our assumption of a single puncture, in which case $\mathfrak{n}$ is just a single integer $n\geq 3$.

 Recall from Theorem \ref{thu-con} that by the equivariance of the developing map for  $\widetilde{P}$, it follows that $(\mathbb{D}, {L})$ would be invariant under some Fuchsian group $\Gamma$ such that $\mathbb{D}/\Gamma$ is homeomorphic to the underlying surface of $P$ -- a once-punctured surface of genus $g$.  

Restrict the projective structure $\widetilde{P}$ to the lift of a neighborhood $U$ of the puncture.
We need to verify that the  grafting lamination for the restriction $\widetilde{P \vert U}$  includes a cyclically ordered chain  of geodesics on $\D$ with infinite weight on each, such that the chain is invariant under a hyperbolic monodromy around the puncture.

By Corollary \ref{cor-dev}, the developing map for $\widetilde{P \vert U}$ descends to a meromorphic function $f$ on $U =\{ \lvert z\rvert >1\}$ having $(n-2)$ asymptotic values in equi-angled sectors $S_0, S_1,\ldots S_{n+1}$. We denote these asymptotic values by  $c_0, c_1,\ldots, c_{n+1} \in \cp$  respectively. Moreover, by Corollary \ref{cor-dev} the restriction of $f$ to each anti-Stokes sector of angle $2\pi/(n-2)$ has the same asymptotic expansion as an exponential map in suitable coordinates for the sector (see Equation \eqref{fexp}). In particular, the developing image of a sector is identical to that of the exponential map. See Figure 7.

By our geometric interpretation of the exponential map in \S4.1,  this developing image  can be described as follows:
for each $0\leq j\leq (n-3)$ choose a circular arc $\gamma_j$ in $\cp$ between  $c_j$ and $c_{j+1}$ such that  $\gamma_j$ is contained in the image of $f$. We obtain a Riemann surface $A_\infty$ by attaching a chain of copies of $\cp$ slit along $\gamma_j$, indexed by non-negative integers,  along each $\gamma_j$. We shall call this a \textit{half-logarithmic end}, which can be thought of as conformally immersed in $\cp$. The map $f$  then maps into $A_\infty$, and in particular, its restriction to a sector  surjects on to the corresponding half-logarithmic end. 

As a consequence of this geometric description for each pair of successive points $\{c_j,c_{j+1}\}$, there is a family of round disks embedded in $\cp$ parametrized by non-negative reals, such that each disk in the family touches $c_j$ and $c_{j+1}$, and their union exhausts the corresponding half-logarithmic end. In the immersed surface in $\cp$, this family of disks starts from a disk $D_0$ that has the circular arc $\gamma_j$ as part of its boundary, and then rotates around $\cp$, such that $D_t$ (where $t\in \mathbb{R}_{\geq 0}$) has a corresponding boundary arc that makes an angle $t$ with $\gamma_j$.  

The construction in Theorem \ref{thu-con} then shows that the corresponding pleated surface will have as bending line the geodesic line in $\mathbb{H}^3$ with endpoints $\{c_j,c_{j+1}\} \in \cp = \partial \mathbb{H}^3$. Moreover, in the construction of the associated pleated surface in Theorem \ref{thu-con}, the entire family of disks along the half-logarithmic end will collapse onto this line. In other words, the domain projective surface has an ``infinite" lune, and hence the corresponding leaf in the grafting lamination will have infinite weight. 

Passing to the universal cover, one obtains a chain of such geodesic lines in $\mathbb{D}$ that will be invariant with respect to the (Fuchsian)  holonomy around the puncture. To show that this monodromy is actually a hyperbolic element, we only need to rule out the case that it is parabolic, since we already know that $\mathbb{D}/\Gamma$ is  homeomorphic to the underlying surface $S$ of $P$:

Suppose the holonomy around the puncture is a parabolic transformation $h$. If the chain of geodesics $\{\tilde{\gamma}_i\}_{i\in \mathbb{Z}}$ in $\mathbb{D}$  is invariant under the infinite cyclic group $\langle h \rangle$, then their endpoints limit to the same point $p \in \partial \D$ as $i\to \pm \infty$, where the fixed point $\text{Fix}(h)=\{p\}$.  If we pick another element $g \in \pi_1(S)$ then the conjugate subgroup  $g\langle h\rangle g^{-1}$ would leave invariant another such chain of geodesics corresponding to another lift of a loop around the puncture. We note that the grafting lamination comprises disjoint leaves and $\text{Fix}(ghg^{-1}) = \{g.p\}$. If $g.p \neq p$, then  the two  chains of geodesics based at $p, \, g.p$ must intersect, contradicting the fact that no two leaves of the grafting lamination intersect.  Hence $g.p = p$, and since this is true for every element $g\in \pi_1(S)$, we conclude that $\Gamma$ is elementary, which is impossible as  $\mathbb{D}/\Gamma$ is homeomorphic to a surface with non-abelian fundamental group.

Thus, the above   chain of geodesics in $\D$ is invariant under this hyperbolic monodromy, and in the quotient $X=\D/\Gamma$, it descends to a crown end for the hyperbolic surface.  From our construction, in each fundamental region, there are exactly $(n-2)$ geodesic lines, and thus the crown end in the quotient has exactly $(n-2)$ boundary cusps.  Thus, the quotient hyperbolic surface $X$ lies in  $\TT_g(\mathfrak{n})$.

Moreover, the grafting lamination  $L$ on $\D$ is invariant under $\Gamma$, and descends to a measured lamination $\lambda$ on such a crowned hyperbolic surface, and thus, by definition, lies in ${ML}_g(\mathfrak{n})$ (\textit{c.f.} \S3.4).
\end{proof}

\medskip

We can finally show:

\begin{prop} The grafting map $\widehat{Gr}$ is a homeomorphism.
\end{prop}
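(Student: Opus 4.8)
The plan is to prove that $\widehat{Gr}$ is a homeomorphism by establishing that it is a continuous bijection between cells of the same dimension, and then invoking invariance of domain. First I would assemble the ingredients already in place: Lemma \ref{dimen} gives $\mathcal{P}_g(\mathfrak{n}) \cong \R^{2\chi}$, Proposition \ref{tgn} gives $\TT_g(\mathfrak{n}) \cong \R^\chi$, and Proposition \ref{mln} gives $\mathcal{ML}_g(\mathfrak{n}) \cong \R^\chi$, so both the domain and the target of $\widehat{Gr}$ are homeomorphic to $\R^{2\chi}$. Proposition \ref{gr-schw} shows the map is well-defined (lands in $\mathcal{P}_g(\mathfrak{n})$), and Proposition \ref{inv} produces a candidate inverse: given $P \in \mathcal{P}_g(\mathfrak{n})$, the Thurston construction of Theorem \ref{thu-con} applied to $\widetilde{P}$ yields a pair $(\D, L)$ which, by Proposition \ref{inv}, descends to a pair $(X, \lambda) \in \TT_g(\mathfrak{n}) \times \mathcal{ML}_g(\mathfrak{n})$.

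Next I would check that these two constructions are mutually inverse. In one direction: grafting $X$ along $\lambda$ reproduces $P$ because, in the universal cover, grafting $\D$ along $L$ reproduces $\widetilde{P}$ (this is exactly the content of the remark following Theorem \ref{thu-con}, that the map $\widetilde{X} \to L$ inverts the grafting map equivariantly), and the construction descends compatibly to the quotient. In the other direction: starting from $(X,\lambda)$, grafting gives some $P \in \mathcal{P}_g(\mathfrak{n})$ by Proposition \ref{gr-schw}, and the Thurston construction recovers $(\D, L)$ hence $(X,\lambda)$, using the uniqueness clause of Theorem \ref{thu-con}. So $\widehat{Gr}$ is a bijection. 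Here one must be slightly careful about the marking/twist data: the twist parameters at the crown ends (Proposition \ref{tgn}) and the integer-twist data in the measured lamination (Proposition \ref{lamcrown}) need to be matched with the twist parameters recorded in the marking of $P$ (Definition \ref{mark}), but this bookkeeping is forced by the equivariance statement and the fact that the homeomorphisms in all three cells are set up compatibly with these markings.

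It then remains to verify continuity. Continuity of $\widehat{Gr}$ follows from the continuity of the grafting construction: grafting depends continuously on the hyperbolic structure and on the measured lamination (including the limiting/infinite-weight leaves along the crown sides, handled by the exhaustion $A_N \nearrow A_\infty$ from the proof of Lemma \ref{lem2-gr}, where the developing maps converge uniformly on compacta), and hence so do the developing map and its Schwarzian derivative. Continuity of the inverse follows from the last statement of Theorem \ref{thu-con}, that the map $\widetilde{X} \to L$ is continuous, together with the continuity of passing from a meromorphic projective structure to its universal cover and from the equivariant data back down to the quotient surface. A continuous bijection between two spaces each homeomorphic to $\R^{2\chi}$ whose inverse is also continuous is a homeomorphism; in fact, once bijectivity and continuity of $\widehat{Gr}$ are established, invariance of domain already gives that $\widehat{Gr}$ is open, hence a homeomorphism, so the continuity of the inverse is a convenient check rather than a logical necessity.

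The main obstacle I anticipate is not any single deep step but the careful verification of continuity of the inverse map near the "boundary" behavior, i.e.\ where a finite-weight leaf's weight tends to a value forcing a transition in the combinatorial type of the dual metric graph (the cell-complex structure in Proposition \ref{lamcrown}), and where the crown-boundary transverse measure $l$ crosses zero so that the closed boundary geodesic enters or leaves the lamination. One has to confirm that the Thurston construction varies continuously across these transitions — which ultimately reduces to the continuity already asserted in Theorem \ref{thu-con} together with the fact, established in Proposition \ref{mln} and Proposition \ref{lamcrown}, that these transitions are interior to the cell $\mathcal{ML}_g(\mathfrak{n}) \cong \R^\chi$ rather than genuine boundary phenomena. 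Matching the twist data precisely across the two sides is the other place where one must be vigilant, but Proposition \ref{inv} has effectively already done this work by identifying the hyperbolic monodromy around each puncture and the induced crown structure.
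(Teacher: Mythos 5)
Your proposal is correct and follows essentially the same route as the paper: establish that the Thurston construction (Theorem \ref{thu-con} together with Proposition \ref{inv}) furnishes a well-defined inverse, hence bijectivity, and then conclude via continuity and invariance of domain using the cell structures from Propositions \ref{tgn}, \ref{mln} and Lemma \ref{dimen}. Your additional remarks on matching the twist data and on continuity across combinatorial transitions in $\mathcal{ML}_g(\mathfrak{n})$ are sensible elaborations of points the paper treats implicitly.
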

\begin{proof}
Recall from Theorem \ref{thu-con} that the Thurston construction for a projective surface obtained by grafting recovers the original  hyperbolic surface and measured geodesic lamination.

By Proposition \ref{inv}, the Thurston construction then defines an inverse map to $\widehat{Gr}$. Moreover, by the same proposition, 
 $\widehat{Gr}$ is surjective. 
 
Since the domain of the map is  homeomorphic to $\R^{2\chi}$ (see Proposition \ref{mln}) and $\widehat{Gr}$ is continuous (see the last statement of Theorem \ref{thu-con}, we conclude that $\widehat{Gr}$ is a homeomorphism (by invariance of domain). \end{proof}

This completes the proof of Theorem 1.1.

\section{Projective structures on $\C$}

The proof of Theorem \ref{thm1} also applies in the case when $g=0$ and $k=1$, and we obtain a grafting description for a certain space of projective structures on the complex plane $\mathbb{C}$ -- see Theorem \ref{thm2} from \S1. 
After defining the spaces appearing in Theorem \ref{thm2} in \S5.1, we provide a proof, and give an application of Theorem \ref{thm2} in \S5.2.

\subsection{Definitions and the proof of Theorem \ref{thm2}} 
We start with a more detailed description of the spaces in Theorem \ref{thm2}:\\

A projective structure in $\mathcal{P}(d)$ is determined by a conformal immersion $f:\mathbb{C} \to \cp$  (the developing map) such that the Schwarzian derivative of $f$ (see Equation \eqref{schwd})  is a polynomial quadratic differential on $\mathbb{C}$ of degree $d$, that is, it can be expressed as 
\begin{equation*}
q = (z^d + a_{d-2}z^{d-2} + \cdots + a_1z + a_0)dz^2
\end{equation*}
 where the coefficients $(a_0,a_1, \ldots, a_{d-2}) \in \C^{d-1}$.   Note that, up to a conformal automorphism of $\mathbb{C}$, any polynomial quadratic differential can be assumed to be monic and centered as above. 
 
 In this section, there will be no additional real twist parameter at $\infty$; indeed, there are no non-trivial Dehn-twists around $\infty$ since $\C$ is simply-connected, and the normalization as above fixes the horizontal directions of $q$ to be at angles $2\pi j/(d+2)$ where $j=0,1,\ldots (d+1)$.\\ 
 
 The existence and uniqueness of such projective structures is a consequence of the work of Sibuya -- see \S5.2  for a discussion. 
Moreover, it follows from his work (see Corollary \ref{cor-dev}) that the entire function $f$ has exactly $(d+2)$ asymptotic values that we call the \textit{crown tips}.  As usual, we shall consider two projective structures on $\C$  to be \textit{equivalent } if the developing maps are isotopic  such that the isotopy keeps the crown tips fixed. 
Recall from Corollary \ref{cor-dev} that the asymptotic values are achieved along rays in the horizontal directions of $q$ which are at equal angles of $2\pi/(d+2)$ starting from the horizontal direction; this gives a cyclic ordering to the set of crown tips.\\

Sibuya showed (see Chapter 8 of \cite{Sib-book}), using the methods from the theory of linear differential systems , that in fact the cyclically  ordered collection of (possibly non-distinct) crown-tips $\mathsf{C} = \{ c_0, c_1, \cdots c_{d+1} \}$ on $\mathbb{C} \mathrm{P}^1$  satisfy (a) $c_k \neq c_{k+1}$ and (b) there are at least three distinct points in $\mathsf{C}$. 

Let $\mathfrak{C}(d)$ be the space of ordered $(d+2)$-tuples in $\cp$ that satisfy (a) and (b) above, up to the action of $\pslc$. (In particular, we can arrange so that the first three points are $0,\infty$ and $1$.) 

We can define the ``crown-tip map"
\begin{equation}\label{psimap}
\Psi: \mathcal{P}(d)  \to \mathfrak{C}(d)
\end{equation}
that assigns to a projective structure on $\C$, the ordered tuple of crown-tips that it determines.\\

Next, $\text{Poly}(d)$ is the space of hyperbolic ideal polygons with $(d+2)$ vertices $a_0, a_1, \cdots, a_{d+1}$ up to isometry, together with a cyclic ordering of  the vertices. Assume, without loss of generality, that $a_0, a_1, \cdots, a_{d+1}$ gives this cyclic ordering. Suppose further that after acting by a suitable isometry, the vertices $a_0, a_1, a_2$ are placed at $ -1, 1, i$. The cross-ratios of successive quadruples $\{a_j, \cdots, a_{j+3}\}$,
$j=0,1, \cdots, d-2$ for the remaining ideal vertices determine $(d-1)$ real parameters that uniquely determine the ideal polygon. Thus, the space $\text{Poly}(d)$ is homeomorphic to $\R^{d-1}$.\\

Finally, the space $\text{Diag}(d)$ is the space of weighted diagonals in an ideal $(d+2)$-gon, where each of the (cyclically ordered) geodesic sides of the polygon have infinite weight. As in the proof of Proposition \ref{lamcrown}, it is useful to consider the corresponding space of dual metric trees, where the length of an edge equals the weight of the diagonal it represents.  It is well-known that the space of such dual metric trees is homeomorphic to $\R^{d-1}$ -- see, for example, Theorem 3.3 of \cite{MulPenk} and the discussion in section 3.2 of \cite{GupWolf3}. (Note that the geodesic sides of infinite weight do not contribute any parameters.) \\

We shall assume that each of these spaces acquire a natural topology via the parametrization we have described for them.

\smallskip

\begin{proof}[Proof of Theorem \ref{thm2}]

The fact that the grafting map described in Equation \eqref{graft3}, which is:
\begin{equation*}
 		\widehat{Gr}_{\C}: \text{Poly}(d)  \times \text{Diag}(d) \to  \mathcal{P}(d)
 		\end{equation*}
is  well-defined follows from Lemmas \ref{lem1-gr} and \ref{lem2-gr} in \S4.3. \\

This is also implied by the work of Sibuya in \cite{Sib-book}, as we now describe:

Let $P\in \text{Poly}(d)$ be an ideal polygon, thought of as conformally embedded in $\D \subset \cp$,  with ideal vertices  $a_0,a_1,\ldots, a_{d+1}$ along the equatorial (real) circle. It is easy to verify that grafting $P$ along a set of diagonals with finite weight takes these vertices to an ordered tuple of points $c_0, c_1,\ldots c_{d+1}$ that lies in the space $\mathfrak{C}(d)$ defined above.

Given such an ordered set $\mathsf{C}$ of points in $\cp$ satisfying (a) and (b) above, Sibuya considered the Riemann surface $\mathcal{R}$ by attaching an infinite chain of copies of $\cp$ (\textit{c.f.} \S4.2) to arcs chosen between successive points.  In our grafting terminology, this is equivalent to performing, in addition to the grafting along the diagonals in $P$ of finite weight,  an infinite grafting along the geodesic sides of $P$. 

A theorem of  Nevanlinna  (\cite{Nevan}) then asserts  that the resulting surface $\mathcal{R}$  is parabolic, i.e.\  $\mathcal{R}$  is conformally equivalent to $\mathbb{C}$. (This is the analogue of  Lemma \ref{lem1-gr} from \S4.3.)  Moreover, Theorem 40.1 of \cite{Sib-book} shows that the map  $f:\mathbb{C} \to \cp$, i.e.\ the composition of the biholomorphism from $\mathbb{C}$ to $\mathcal{R}$, followed by the branched cover to $\cp$, has a Schwarzian derivative that is a polynomial quadratic differential of degree $d$.  (This is the analogue of Lemma \ref{lem2-gr} from \S4.3.)  By construction, the asymptotic values of $f$ are the infinite-order branch-points at $\mathsf{C}$.  Thus, $f$ defines a projective structure $P\in \mathcal{P}(d)$, with the crown-tips $\mathsf{C}$. See Chapter 8 \S40, 41 of \cite{Sib-book} for details. \\

Then, the proof in \S4.4  carries through, to show that $\widehat{Gr}_{\C}$ admits an inverse map. Recall that this uses the Thurston construction -- see Proposition \ref{inv}. In fact, the present discussion would be easier than the work required in the proof of Proposition \ref{inv}, since the punctured surface $\C$ is simply-connected, and  we need not pass to the universal cover. Theorem \ref{thu-con} applies directly, and the argument in Proposition \ref{inv} (that uses Corollary \ref{cor-dev}) shows that the grafting lamination includes a closed chain of $(d+2)$ geodesic lines in $\D$,  each of infinite weight, that thus bounds an ideal polygon $P\in \text{Poly}(d)$. The remaining geodesic leaves of the grafting lamination must be pairwise disjoint, and hence must constitute a collection of weighted diagonals in $P$.

Thus, this inverse map has image in $\text{Poly}(d)  \times \text{Diag}(d)$ when we start with any projective structure in  $\mathcal{P}(d)$.

In particular, this proves that $\widehat{Gr}_{\C}$ is a bijection. Since the spaces in the domain and range of $\widehat{Gr}_{\C}$ in Equation \eqref{graft3} are homeomorphic to $\R^{2d-2}$, we conclude, from the invariance of domain, that $\widehat{Gr}_{\C}$ is a homeomorphism. 
\end{proof}

\subsection{Fibers of the crown-tip map} 
In this section, we use the grafting description in Theorem \ref{thm2} to characterize the fibers of the map $\Psi$ in Equation \eqref{psimap}, i.e.\ the set of all projective structures in $\C$ that have the same ordered set of crown-tips (as defined in \S5.1).\\

The work of Bakken in \cite{Bakk} showed that $\Psi$ is in fact a local biholomorphism.
However, it was known, due to examples of Sibuya (see \S42 of \cite{Sib-book}) and Bakken (see \S7 of \cite{Bakk}), that $\Psi$ is not globally injective.

We shall now prove:

\begin{thm}\label{nonuniq}
Fix an ordered tuple $\mathsf{C} \in \mathfrak{C}(d)$. 
For any disjoint collection of diagonals 
\begin{equation}\label{gr0}
\mathcal{D}= \{l_1,l_2,\ldots, l_{d-1}\}
\end{equation}
 in an abstract $(d+2)$-gon, there exists a unique ideal polygon  $P\in \text{Poly}(d)$ and a unique collection of non-negative weights $\{w_1,w_2,\ldots, w_{d-1}\}$,
 $w_i \in [0, 2\pi)$, on the diagonals such that 
\begin{equation}\label{gr1}
\widehat{Gr}_{\C}\left(P,  L\right) \in \Psi^{-1}(\mathsf{C})
 \end{equation}
 whenever $L\in \text{Diag}(d)$ is a weighted diagonal assigning  weight $(w_i+ 2\pi n_i)$ to the diagonal $l_i$, for a tuple $(n_1,n_2,\ldots n_{d-1}) \in \mathbb{Z}_{\geq 0}^{d-1}$, together with the geodesic sides of $P$, each with infinite weight.  We write this as:
 \begin{equation}\label{gr2}
 L = (w_1+ 2\pi n_1)\cdot l_1 + (w_2+ 2\pi n_2)\cdot l_2  + \cdots + (w_{d-1}+ 2\pi n_{d-1})\cdot l_{d-1}
 \end{equation}
Moreover, any  element of the fiber $\Psi^{-1}(\mathsf{C})$ is given via a grafting construction (Equation \eqref{gr1}) by the following data:
\begin{enumerate}
\item a choice of diagonals as in Equation \eqref{gr0},
\item the unique associated ideal polygon $P$ and $(d-1)-$tuple of weights as above, with
$w_i \in [0, 2\pi)$, 
\item a $(d-1)-$tuple of integers $n_i \in \Z_{\geq 0}$ as in Equation \eqref{gr2}.
\end{enumerate}
\end{thm}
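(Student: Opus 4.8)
The plan is to combine the grafting parametrization of Theorem \ref{thm2} with the non-injectivity of the $2\pi$-grafting operation on the geodesic sides versus the diagonals. First I would fix the ordered tuple $\mathsf{C} \in \mathfrak{C}(d)$ and a combinatorial type of diagonalization $\mathcal{D} = \{l_1,\ldots,l_{d-1}\}$ of the abstract $(d+2)$-gon. The key observation is that the crown-tip map $\Psi$ only records the asymptotic values $c_0,\ldots,c_{d+1}$, which by Corollary \ref{cor-dev} and the grafting description are exactly the images of the ideal vertices $a_0,\ldots,a_{d+1}$ of the polygon $P$ under the developing map; and crucially, as noted in \S4.2, a $2\pi$-grafting along an embedded arc does not change the developing image data that determines these asymptotic values --- equivalently, adding $2\pi$ to the weight on a diagonal reglues the surface along a full slit and produces the same crown tips (indeed the same holonomy, cf.\ \cite{GolProj}). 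So the fiber $\Psi^{-1}(\mathsf{C})$, restricted to the cell of $\widehat{Gr}_\C$ corresponding to the diagonalization $\mathcal{D}$, is acted on freely by $\Z_{\geq 0}^{d-1}$ via these $2\pi$-shifts, and I need to show each orbit contains a unique representative with all weights in $[0,2\pi)$, pinned down together with a unique polygon $P$.

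The main steps, in order, would be: (i) By Theorem \ref{thm2}, every $P' \in \mathcal{P}(d)$ is $\widehat{Gr}_\C(P, L)$ for a unique $(P,L) \in \text{Poly}(d) \times \text{Diag}(d)$; stratify $\text{Diag}(d)$ by the finitely many combinatorial types of the support of $L$, so it suffices to work within one such stratum with fixed diagonals $\mathcal{D}$ as in \eqref{gr0}. (ii) Show that the map sending $(P, w_1,\ldots,w_{d-1}) \in \text{Poly}(d) \times [0,2\pi)^{d-1}$ to the crown tips of $\widehat{Gr}_\C\big(P, \sum w_i l_i\big)$ --- i.e.\ the composition of $\widehat{Gr}_\C$ with $\Psi$ --- is a bijection onto $\mathfrak{C}(d)$ for this fixed combinatorial type. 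The domain here has real dimension $(d-1) + (d-1) = 2d-2 = \dim \mathfrak{C}(d)$, so invariance of domain reduces this to checking injectivity and surjectivity. Surjectivity follows from Theorem \ref{thm2} combined with the $2\pi$-reduction: given $\mathsf{C}$, pick any preimage under $\Psi$, use Theorem \ref{thm2} to write it as $\widehat{Gr}_\C(P,L)$ with $L$ supported on some diagonalization, and subtract multiples of $2\pi$ from the weights (which by the grafting-in-a-$2\pi$-lune observation of \S4.2 does not move the crown tips), thereby landing in $[0,2\pi)^{d-1}$. (iii) For injectivity, suppose two such data $(P, w_i)$ and $(P', w_i')$ give the same $\mathsf{C}$. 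Add back arbitrary integer multiples of $2\pi$ and invoke the uniqueness half of Theorem \ref{thm2}: one must show that the grafting lamination of the resulting projective structure --- which is recovered by the Thurston construction of Proposition \ref{inv}, hence canonically determined by the projective structure alone --- forces $P = P'$ and $w_i = w_i'$ once we fix the combinatorial type and the interval constraint. (iv) Finally, assemble: for each choice of diagonals \eqref{gr0} there is the unique $(P, \{w_i\})$ from step (ii)--(iii), and then every element of $\Psi^{-1}(\mathsf{C})$ arises from such a choice together with a tuple $(n_1,\ldots,n_{d-1}) \in \Z_{\geq 0}^{d-1}$ as in \eqref{gr2}, because Theorem \ref{thm2} forces its grafting lamination to be supported on \emph{some} diagonalization, with the fractional parts of the weights being the $w_i$.

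The hard part will be step (iii), the injectivity/uniqueness of the pair $(P, \{w_i\})$: a priori, two \emph{different} ideal polygons $P \neq P'$ (with different cross-ratio parameters) could conceivably graft, along differently-weighted diagonals, to projective structures with the same crown tips, since $\Psi$ forgets a great deal. The resolution is that the Thurston construction of Theorem \ref{thu-con}, as used in Proposition \ref{inv}, reconstructs the grafting lamination --- hence the ideal polygon $P$ (as the convex hull of the infinite-weight geodesic sides) and the weights on the diagonals --- uniquely from the projective surface, so two data producing the same projective structure must coincide; what needs care is that the same \emph{crown tips} need not give the same \emph{projective structure}. So the real content is that, after normalizing the fractional weights to $[0,2\pi)$ and fixing the combinatorial type, the crown tips $\mathsf{C}$ \emph{do} determine $P$ and $\{w_i\}$: this is precisely the dimension-count-plus-invariance-of-domain argument of step (ii), and I expect the delicate point to be verifying that the relevant map is a local homeomorphism (e.g.\ continuity and properness within a stratum, and that strata glue correctly across changes of combinatorial type), which I would handle by mirroring the continuity arguments already established for $\widehat{Gr}_\C$ and for the Thurston construction.
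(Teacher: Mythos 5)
Your overall architecture (stratify by the combinatorial type of the diagonals, get surjectivity from Theorem \ref{thm2} plus reduction of weights mod $2\pi$, then prove uniqueness of $(P,\{w_i\})$ within a stratum) is reasonable, and the surjectivity half essentially matches the paper. But there is a genuine gap at the uniqueness step, which you yourself flag as ``the hard part'' and then do not close. A dimension count plus invariance of domain can never establish injectivity (it only upgrades an \emph{already injective} continuous map to an open embedding), and the appeal to Theorem \ref{thu-con}/Proposition \ref{inv} is, as you note, beside the point: the Thurston construction recovers $(P,L)$ from the \emph{projective structure}, whereas two distinct data could a priori produce two distinct projective structures in the same fiber of $\Psi$. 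Your step (iii) therefore circles back to step (ii) without supplying an argument. The missing idea is the explicit cross-ratio dictionary from the ideal-quadrilateral computation in \S5.2: grafting the quadrilateral $(\infty,-1,0,\lambda)$ with $\lambda\in\R^+$ by angle $t$ along its diagonal sends $\lambda\mapsto \lambda e^{-it}$, so the \emph{complex} cross-ratio $\lambda_Q$ of a quadruple of crown tips determines both the shape of the corresponding ideal quadrilateral (its real cross-ratio $\lvert\lambda_Q\rvert$) and the weight $w(Q)=-\arg(\lambda_Q)\in[0,2\pi)$ on its diagonal, uniquely. Running this over the quadrilaterals attached to the diagonals of $\mathcal{D}$, and inducting over the dual tree of the triangulation, pins down $P$ and all the $w_i$ from $\mathsf{C}$ alone. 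This single computation is the engine of the paper's proof and is absent from yours.

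The same omission leaves a second, smaller gap in existence. The theorem asserts that for \emph{every} choice of diagonals $\mathcal{D}$ there is a pair $(P,\{w_i\})$ whose grafting lands in $\Psi^{-1}(\mathsf{C})$; your surjectivity argument only produces such a pair for whichever diagonalizations happen to support the grafting laminations of actual elements of the fiber. The cross-ratio construction fixes this too, since it builds $P$ and the weights directly from $\mathsf{C}$ and the chosen $\mathcal{D}$, after which one checks (as in the paper) that the resulting grafted structure indeed has crown tips $\mathsf{C}$ because the infinite grafting on the sides of $P$ does not move the tips.
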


\medskip

Before giving the proof of Theorem \ref{nonuniq}, we describe the operation of grafting for an ideal quadrilateral that will play a role; note in particular that grafting ideal polygons along a collection of weighted diagonals can be described completely in two dimensions, that is, on the complex plane $\C$, without reference to three-dimensional hyperbolic geometry as in \S2.2.

\subsubsection*{Grafting an ideal quadrilateral} Consider an ideal quadrilateral defined by the (cyclically ordered) tuple of ideal vertices $\infty, -1, 0, \lambda$ where $\lambda \in \mathbb{R}^+$. A grafting (or ``bending") by angle $t$ along the diagonal between $0$ and $\infty$ can be seen on the upper half-plane as follows: the diagonal line in this model is the vertical geodesic $\alpha$ from $0$ to $\infty$; this divides the upper half-plane into the two regions $R_-$ and $R_+$  that are the quarter-planes defined by $\text{Re}(z) <0$ and $\text{Re}(z) >0$ respectively. The grafting is then effected by a map that is the identity on $R_-$ and the rotation $z\mapsto e^{-i\theta}$ on $R_+$; the image is a new domain that is obtained from the upper half-plane by grafting in a lune of angle $t$, at the vertical geodesic $\alpha$.  Clearly, the grafting fixes the points $-1,0,\infty$ and takes $\lambda$ to the new point $\lambda e^{-it} \in \cp$.  (See Figure \ref{lune} in \S2.2.) 

Note that the resulting tuple of points $(\infty, -1, 0, \lambda e^{-it})$ could also have been obtained by grafting along the diagonal line $\alpha^\prime$ between $-1$ and $\lambda$.  A cross-ratio calculation shows that there is a conformal map that realizes the permutation $(\infty, -1, 0, r) \mapsto (-1, 0, 1/r, \infty)$, and hence a graft by an angle $2\pi - t$ along $\alpha^\prime$, results in the same configuration of four points.

\begin{proof}[Proof of Theorem \ref{nonuniq}]

Let $\{c_0,c_1,\ldots, c_{d+1}\}$ be the ordered tuple $\mathsf{C} \in \mathfrak{C}(d)$.

An ideal $(d+2)$-gon  with ideal vertices $a_0,a_1,\ldots a_{d+1}$  would be triangulated by the collection of diagonals determined by $\mathcal{D}$.  Each diagonal line $\delta$ determines an ideal quadrilateral $Q$ comprising the two ideal  triangles adjacent to  $\delta$. This determines  a collection $\mathcal{Q}$  of overlapping quadrilaterals:  each pair of quadrilaterals  in $\mathcal{Q}$ is either disjoint, or overlaps along an ideal triangle. Note that there is a dual tree $T$ determined by this configuration of diagonals -- the vertices of $T$ correspond to the ideal quadrilaterals, and there is an edge between vertices whenever the corresponding quadrilaterals overlap. 

 It is easy to check by an inductive proof based on the tree $T$, that the ideal $(d+2)$-gon is uniquely determined by the cross ratios of the quadrilaterals in $\mathcal{Q}$, where vertices of each are taken in the induced cyclic order.
 
Now for each quadrilateral $Q \in \mathcal{Q}$  we can choose the ideal vertices $\{a_j,a_k,a_l,a_m\}$ of $Q$ such that it has a cross-ratio $\lvert \lambda_Q \rvert$, where $\lambda_Q$ the cross-ratio of the four points $c_j,c_k,c_l,c_m$. 
Let $P$ be the ideal polygon that this data  uniquely determines.

To assign weights to these diagonals, note that  $Q$ has a diagonal $d_Q \in \DD$; the toy example preceding the lemma describes how one can graft $Q$ along this diagonal $\delta$ by an angle $w(Q) \in [0,2\pi)$ such that the images of the vertices  $\{a_j,a_k,a_l,a_m\}$ are the four points  $c_j,c_k,c_l,c_m$ (in the ordered tuple $\mathsf{C}$) with cross-ratio equal to $\lambda_Q$. 
We equip that  diagonal $d_Q$ with weight $w(Q)$.

Thus by construction, grafting each diagonal $d_Q$ in $\DD$ by an angle $w(Q)$, we obtain $\widehat{Gr}_{\C}\left(P,  L\right)$ where (see Equation \eqref{gr2}) $$L = \sum\limits_{Q} w(Q)d_Q$$ takes the vertices of $P$ to the tuple of points $c_0,c_1,\ldots c_{d+1}$, as desired.  Thus, $\widehat{Gr}_{\C}\left(P,  L\right)$ is a projective structure in $\mathcal{P}(d)$ with crown-tips exactly the ordered tuple $\mathsf{C} \in \mathfrak{C}(d)$.  (The infinite grafting on the geodesic sides of $P$ does not affect the positions of these crown-tips.) 

\begin{figure}
  \centering
  \includegraphics[scale=0.53]{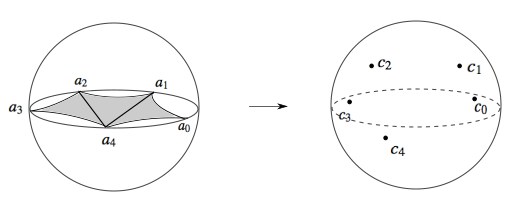}\\
  \caption{Any configuration of points  on $\cp$(right) can be obtained by grafting an ideal polygon along a collection of weighted diagonals (left).}
\end{figure}

Note that adding $2\pi$ to the weights of the diagonals, i.e.\ performing integer-$2\pi$ grafting (see \S4.1) does not change the  configuration of crown tips $\mathsf{C}$.

On the other hand, by Theorem \ref{thm2}, any projective structure in $\Psi^{-1}(\mathsf{C})$ is of the form  $\widehat{Gr}_{\C}\left(P^\prime,  L^\prime\right)$ for some ideal polygon $P\in \text{Poly}(d)$ and weighted diagonals $L^\prime \in \text{Diag}(d)$.  By the previous discussion, the collection of diagonals $\mathcal{D}$ underlying $L^\prime$ uniquely  determines $P^\prime$, and the weights of $L^\prime$ modulo $2\pi$. \end{proof}

\section{The monodromy map and Theorem \ref{thm3}}

In this final section we consider the monodromy map
\begin{equation}\label{mmap}
\Phi : \mathcal{P}_g(\mathfrak{n}) \to {\widehat{\bigchi}}_{g,k}(\mathfrak{n}) 
\end{equation}
where the target is the \textit{decorated character variety} that we shall define in \S6.1.

In \S6.2, we shall prove Theorem \ref{thm3}; this shall use the grafting description for meromorphic projective structures that Theorem \ref{thm1} provides.

\subsection{Decorated character variety}

For an oriented surface $S_{g,k}$ of genus $g$ and $k$ (labeled) punctures and negative Euler-characteristic, the usual $\pslc$-\textit{character variety} is
\begin{equation*}
\bigchi_{g,k} = \text{Hom}(\pi_1(S_{g,k}), \pslc)/\!\!/\pslc
\end{equation*}
where the geometric-invariant-theory (GIT) quotient on the right, yields a quasi-projective variety of (complex) dimension $6g-6 + 3k$.

In what follows, we shall denote the {\it representation variety} as $$\mathcal{R}_{g,k} :=\text{Hom}(\pi_1(S_{g,k}), \pslc).$$ Thus, $\mathcal{R}_{g,k}$ is the space of representations, prior to the quotient. Given $\rho \in \mathcal{R}_{g,k}$, the monodromies around the $k$ punctures shall be denoted by  $\rho_1,\rho_2,\ldots, \rho_k \in \pslc$  respectively. \\

Fix a $k$-tuple $\mathfrak{n} = (n_1,n_2,\ldots, n_k)$ where each $n_i \geq 3$.

Recall from \S3.1 that a meromorphic projective structure  $P\in \mathcal{P}_g(\mathfrak{n})$ is a projective structure on a 
surface $\hat{S}$ of genus $g$ and $k$ boundary components, with $m_i:=n_i-2$ marked points on the $i$-th boundary component, where $1\leq i\leq k$. 
In particular, the holonomy of the projective structure determines a representation $\rho \in \bigchi_{g,k}$.  

In addition to this, we know from the grafting description provided by Theorem \ref{thm1}, or from Corollary \ref{cor-dev}, that the developing map for $P$, when restricted to neighborhood of the $i$-th pole of order $n_i\geq 3$, has $m_i:=(n_i-2)$ asymptotic values, where $1\leq i\leq k$.
This yields a point in $\cp$ for each connected component of a $\partial \hat{S} \setminus \{\text{marked points on the boundary}\}$. 

Passing to the universal cover, we have a family of points on $\cp$ that are translates of an (ordered) fundamental set  $\mathsf{C}_i = \{c_0^i,c_1^i,\ldots c_{m_i-1}^i\}$, by the monodromy $\rho_i$ around the $i$-th boundary component of $\hat{S}$. 

Just as for the set of crown-tips $\mathfrak{C}(d)$ (defined before Equation \ref{psimap}), no two adjacent points in $\mathsf{C}_i$ are the same, that is, $c_j^i \neq c_{j+1}^i$ for $0\leq j\leq m_i-2$.   (Their translates under $\rho_i$ may coincide, for example, when $\rho_i$ is an elliptic element of finite order.)

Let $\widehat{\mathfrak{C}}(m_i, \rho_i)$ be the space of such ordered $m_i$-tuples of points in $\cp$, together with a choice of a (fixed) monodromy matrix $\rho_i\in \pslc$ (which determines translates of the ordered set by the cyclic group generated by $\rho_i$).
This defines the space of \textit{decorations} at a boundary component of $\hat{S}$, and its quotient by the conjugation-action of $\pslc$  is exactly the space of ``{configurations of flags}"  defined by Fock-Goncharov (see pg. 11 of \cite{FG}), in the present context,  since a ``flag" in $\C^2$ can be thought of as a point in $\cp$.   \\

We then define:

\begin{defn}\label{decv} 
The \textit{decorated character variety} is the space
\begin{equation*}
{\widehat{\bigchi}}_{g,k}(\mathfrak{n})  = \left\{ (\rho, \mathsf{C}_1, \mathsf{C}_2, \ldots, \mathsf{C}_k) \text{ } \bigg\vert\text{ } \rho \in \mathcal{R}_{g,k} \text{ and } \mathsf{C}_i  \in \widehat{\mathfrak{C}}(m_i, \rho_i) \text{ for } 1\leq i\leq k \right\}\bigg/\!\bigg/\pslc 
\end{equation*}
where  $m_i = n_i-2$ and $\rho_i$ is the monodromy around the $i$-th puncture, for each $1\leq i\leq k$. 
\end{defn}

\textit{Remarks.} 1. This coincides with the notion of the \textit{moduli stack of framed representations (or framed local systems)} of Fock-Goncharov -- see Definition 2.2  of \cite{FG} or Definition 2.7 of \cite{Palesi}, and also \S1.4 of \cite{AllBrid}. \\
2. For other notions of a ``decorated" character variety, see \cite{Boalch} or \cite{Chekhov} (see \S1.6.3 of \cite{AllBrid} for a discussion.) \\

From the preceding discussion, the meromorphic projective structure  $P\in \mathcal{P}_g(\mathfrak{n})$ uniquely determines a \textit{decorated monodromy} $\hat{\rho} \in {\widehat{\bigchi}}_{g,k}(\mathfrak{n})$. This defines the monodromy map $\Phi$ (see  \eqref{mmap}). \\

Moreover,  the work of Allegretti-Bridgeland shows that:\\
(a)  the image of the monodromy map $\Phi$ lies in an open dense subset ${\widehat{\bigchi}}_{g,k}(\mathfrak{n})^\ast \subset {\widehat{\bigchi}}_{g,k}(\mathfrak{n}) $ comprising (in their terminology) those representations having non-degenerate framing -- see \S6 of \cite{AllBrid},  \\
(b) the space ${\widehat{\bigchi}}_{g,k}(\mathfrak{n})^\ast$ is a complex manifold -- see \S9 of \cite{AllBrid}. \\

The following is essentially a consequence of  the ``Decomposition Theorem" of Fock-Goncharov (see Theorem 1.1. of  \cite{FG}, and Theorem 2.8 of \cite{Palesi}).

\begin{prop}\label{dimchar} The space ${\widehat{\bigchi}}_{g,k}(\mathfrak{n})^\ast$ is a complex manifold of dimension $\chi =6g-6 + \sum\limits_{i=1}^k (n_i+1)$.
\end{prop}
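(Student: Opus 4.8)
The statement that ${\widehat{\bigchi}}_{g,k}(\mathfrak{n})^\ast$ is a complex manifold has already been recorded above, following \S9 of \cite{AllBrid}, so only its dimension is at issue. The plan is to compute that dimension directly from Definition \ref{decv}, and then to cross-check the answer against the count furnished by the Fock--Goncharov Decomposition Theorem \cite{FG}, which is the source from which the proposition ``essentially'' follows.

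First I would record the dimensions of the constituent pieces. Since $k\geq1$, the group $\pi_1(S_{g,k})$ is free of rank $2g+k-1$, so after a choice of free generators the representation variety $\mathcal{R}_{g,k}=\Hom(\pi_1(S_{g,k}),\pslc)$ is identified with $\pslc^{\,2g+k-1}$, a complex manifold of dimension $3(2g+k-1)$. For a fixed $\rho\in\mathcal{R}_{g,k}$ the decoration at the $i$-th puncture ranges over $\widehat{\mathfrak{C}}(m_i,\rho_i)$, which by its very definition is the open subset of $(\cp)^{m_i}$ cut out by the conditions $c^i_j\neq c^i_{j+1}$ (together with $c^i_{m_i-1}\neq\rho_i\cdot c^i_0$); the matrix $\rho_i$ is determined by $\rho$ and so contributes no new parameters. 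Hence the space $\mathcal{Z}^\ast$ of non-degenerate framed data --- the preimage of ${\widehat{\bigchi}}_{g,k}(\mathfrak{n})^\ast$ before quotienting by $\pslc$ --- is an open subset of $\pslc^{\,2g+k-1}\times\prod_{i=1}^{k}(\cp)^{m_i}$, since the non-degeneracy hypothesis and the inequalities above are all open conditions; thus $\mathcal{Z}^\ast$ is a complex manifold of dimension $3(2g+k-1)+\sum_{i=1}^{k}(n_i-2)$, recalling that $m_i=n_i-2$.

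Next I would pass to the quotient. On the non-degenerate locus the simultaneous $\pslc$-action --- conjugation on $\rho$ and M\"{o}bius transformations on the tuples $\mathsf{C}_i$ --- is free, so ${\widehat{\bigchi}}_{g,k}(\mathfrak{n})^\ast=\mathcal{Z}^\ast/\pslc$ is a complex manifold of dimension $\dim\mathcal{Z}^\ast-\dim\pslc$, which yields
\[
\dim_{\C}{\widehat{\bigchi}}_{g,k}(\mathfrak{n})^\ast=3(2g+k-1)+\sum_{i=1}^{k}(n_i-2)-3=6g-6+\sum_{i=1}^{k}(n_i+1)=\chi.
\]
As a consistency check I would rederive this through \cite{FG}. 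Fix an ideal triangulation of the surface-with-boundary $\hat{S}$ underlying a structure $P\in\mathcal{P}_g(\mathfrak{n})$ --- of genus $g$, with $k$ boundary circles carrying $m_i=n_i-2$ marked points each, hence $M=\sum_i(n_i-2)$ marked points in all, such a triangulation existing because $\chi(\hat{S})<0$. Since $\PGL_2\cong\pslc$, the Decomposition Theorem coordinatizes the framed local system by one $\C^\ast$-valued cross-ratio per interior edge and by $\binom{2-1}{2}=0$ further parameters per triangle, so the dimension equals the number $E_{\mathrm{int}}$ of interior edges; and the relations $V=M$, $3F=2E_{\mathrm{int}}+M$ and $F-E_{\mathrm{int}}=\chi(\hat{S})=2-2g-k$ give $E_{\mathrm{int}}=M+6g+3k-6=6g-6+\sum_{i=1}^{k}(n_i+1)$, in agreement with the displayed formula.

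The step I expect to demand the most care is the freeness (and properness) of the $\pslc$-action on ${\widehat{\bigchi}}_{g,k}(\mathfrak{n})^\ast$: this is exactly where passing to the non-degenerate locus --- rather than working on all of ${\widehat{\bigchi}}_{g,k}(\mathfrak{n})$ --- is essential, for it is what forces the augmented datum $(\rho,\mathsf{C}_1,\ldots,\mathsf{C}_k)$ to have trivial stabilizer, so that the honest quotient (and not the GIT quotient) governs the dimension. This is already implicit in the manifold statement borrowed from \cite{AllBrid}, but in a self-contained write-up I would spell it out. If one instead runs the whole argument through \cite{FG}, the analogous delicate point is to match the data $(\mathsf{C}_i,\rho_i)$ of Definition \ref{decv} with the flag data at the marked points of $\partial\hat{S}$ and to check that ``non-degenerate framing'' is precisely the genericity hypothesis under which their coordinate charts are defined.
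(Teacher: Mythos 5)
Your argument is correct, and the arithmetic in both the direct count and the Euler-characteristic cross-check is right; but it is a genuinely different route from the one the paper takes. The paper does not compute the dimension from Definition \ref{decv} at all: instead it exhibits an explicit open subset of real dimension $2\chi$ inside ${\widehat{\bigchi}}_{g,k}(\mathfrak{n})^\ast$ as the image, under $\Phi\circ\widehat{Gr}$, of a product $U\times V\subset \mathcal{T}_g(\mathfrak{n})\times\mathcal{ML}_g(\mathfrak{n})$ built from a maximal system $\mathcal{L}$ of weighted ideal geodesics (so that the grafting lamination triangulates the surface), and then proves that $\Phi\circ\widehat{Gr}$ is injective on $U\times V$ by using Theorem 1.1 of \cite{FG} to recover the complex cross-ratios of the bent quadrilaterals, hence the weights and the shear data; invariance of domain then pins down the dimension. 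Your main computation is essentially the paper's Remark 1 following the proposition (``alternatively, one can show that at a generic point, the space of configurations $\widehat{\mathfrak{C}}(m_i,\rho_i)$ \dots is of complex dimension $m_i$''), fleshed out; it is more elementary and logically lighter, since it does not invoke Theorem \ref{thm1} or the grafting construction. What the paper's longer route buys is a byproduct it uses later: the injectivity of $\Phi\circ\widehat{Gr}$ on $U\times V$ is a prototype for the local-injectivity argument in the proof of Theorem \ref{thm3} (see Remark 2 after the proposition). The one point you rightly flag --- freeness and properness of the $\pslc$-action on the non-degenerate locus, which is what makes the honest quotient a manifold of dimension $\dim\mathcal{Z}^\ast-3$ --- is exactly what is being outsourced to \S 9 of \cite{AllBrid} in both your write-up and the paper's, so your proof is complete modulo the same external input the paper itself relies on.
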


\begin{proof}

By (b) above, it is enough to verify there is an open set of \textit{real} dimension $\R^{2\chi}$ contained  in   ${\widehat{\bigchi}}_{g,k}(\mathfrak{n})^\ast$. 

Pick a crowned hyperbolic surface $X\in \mathcal{T}_g(\mathfrak{n})$ and a measured lamination in $\mathcal{ML}_g(\mathfrak{n})$ that comprises a maximal set $\mathcal{L}$ of disjoint weighted bi-infinite geodesics between the various boundary cusps. Note that the maximality implies that $\mathcal{L}$  divides $X$ into ideal triangles, and an easy combinatorial count using the Euler-characteristic of the punctured surface implies that the cardinality $\lvert \mathcal{L} \rvert = \chi$. In fact, from Proposition \ref{mln} we obtain an open set  $V \subset \mathcal{ML}_g(\mathfrak{n})$ containing $\mathcal{L}$ by varying the weights of the geodesic lines underlying $\mathcal{L}$. Moreover, it is easy to check,  from shear-coordinates in Teichm\"{u}ller theory (see, for example, \cite{BBFS}), that by varying the real ``shear" parameter on each of the geodesic lines in $\mathcal{L}$, we obtain an open set  $U \subset \mathcal{T}_g(\mathfrak{n})$ containing $X$. 

 By Theorem \ref{thm1}, we know $P^\prime = \widehat{Gr}(X^\prime , \mathcal{L}^\prime) \in \mathcal{P}_g(\mathfrak{n})$ for any pair $(X^\prime, \mathcal{L}^\prime) \in U\times V$, and by part (a) of the discussion preceding the theorem, the decorated monodromy $\hat{\rho}$ for such a  structure is a point in ${\widehat{\bigchi}}_{g,k}(\mathfrak{n})^\ast$.
 
 By Propositions \ref{tgn} and \ref{mln}, both $U$, $V$ are homeomorphic to $\R^\chi$, and by (b) above ${\widehat{\bigchi}}_{g,k}(\mathfrak{n})^\ast$ is a manifold. Thus, it suffices, by the invariance of domain, to show that $\Phi \circ \widehat{Gr}$ is injective on $U\times V$. \\

 Let $\rho \in \bigchi_{g,k}$ be the  holonomy of $P^\prime$ forgetting the decorations (\textit{c.f.} Definition \ref{decv}).
  By the construction in the proof of Theorem \ref{thu-con} we know that via the developing map for this projective structure, we obtain a $\rho$-equivariant map $\Psi:\widetilde{X} \to \mathbb{H}^3$. The map  $\Psi$ gives a pleated plane $\mathcal{P}$ that is pleated (or bent) at the lifts of the leaves of $\mathcal{L}^\prime$. Note that $\Psi$ is well-defined up to post-composition by an element of $\pslc$. By the maximality of $\mathcal{L}^\prime$, these lifts triangulate $\mathcal{P}$ into a $\rho$-invariant collection of ideal triangles. Recall that after ``straightening" $\mathcal{P}$, we obtain a totally geodesic copy of the hyperbolic plane, and the pleating locus determines a  lamination invariant under a Fuchsian group $\Gamma$. 
 
As in the proof of Theorem \ref{nonuniq}, the collection of geodesic lines $\widetilde{\mathcal{L}}^\prime$ determines a collection of ``bent" quadrilaterals $\mathcal{Q}$ by assigning, to each line $l\in \widetilde{\mathcal{L}}^\prime$, the quadrilateral $Q$ formed by the two ideal triangles in $\mathcal{P}$ adjacent to $l$. \\

Theorem 1.1 of \cite{FG}, applied to our setting ($G =\pslc$) implies that the  complex cross ratios of each $Q\in \mathcal{Q}$ is determined by the decorated monodromy $\hat{\rho}$. (See also the Example on page 11 of \cite{FG}.)  Then, from the discussion in \S5.2 on grafting an ideal quadrilateral, the weights on the leaves of $\widetilde{\mathcal{L}}^\prime$, as well the real cross-ratios of the ``straightened" quadrilaterals, are determined uniquely by these complex cross-ratios.  The real cross-ratios, in turn, determine the ideal quadrilaterals (overlapping along ideal triangles) that constitute the fundamental domain of the $\Gamma$-action on the ``straightened" pleated plane. This uniquely determines the hyperbolic surface $X^\prime$ that we obtain in the quotient. Moreover, the $\Gamma$-invariance of  the pleating locus determined by $\widetilde{\mathcal{L}}^\prime$ shows that the weighted geodesics constituting the lamination $\mathcal{L}^\prime$ are uniquely determined.

In other words, the decorated monodromy $\hat{\rho}$ recovers the pair $(X^\prime, \mathcal{L}^\prime) \in U \times V$. 
This completes the proof of the injectivity of the monodromy map $\Phi \circ \widehat{Gr}$ on $U\times V$, and hence of the Proposition. \end{proof}

 \textit{Remarks.} 1.  Alternatively, one can show that at a generic point, the space of configurations $\widehat{\mathfrak{C}}(m_i, \rho_i)$, for fixed $\rho_i$,  is of complex dimension $m_i$, for each $1\leq i\leq k$. Adding these contributions to $\text{dim}_{\C}(\bigchi_{g,k}) = 6g-6+3k$, we again get complex dimension $\chi$.\\
 2. Our proof of Proposition \ref{dimchar} in fact shows that the monodromy map $\Phi$ is a homeomorphism on the open set $U\times V$. The argument in the next section to show that $\Phi$ is a local homeomorphism \textit{everywhere} will follow a similar strategy, with an additional difficulty arising from the fact that the grafting lamination may not be just a maximal set of weighted geodesic lines.

\subsection{Proof of Theorem \ref{thm3}} 

In this section, we shall prove Theorem \ref{thm3}, namely, that $\Phi$ in  \eqref{mmap} is a local homeomorphism.\\

Throughout the section, we shall fix a base meromorphic projective structure $P \in \mathcal{P}_g(\mathfrak{n})$ that has monodromy $\widehat{\rho} \in {\widehat{\bigchi}}_{g,k}(\mathfrak{n})$.

By Theorem \ref{thm1}, we know that $P =\widehat{Gr}(X,\lambda)$ for some pair  $(X, \lambda) \in  \mathcal{T}_g(\mathfrak{n})\times   \mathcal{ML}_g(\mathfrak{n})$.

Our task is to show that there is a small neighborhood $U$ of $X$ in $\mathcal{T}_g(\mathfrak{n})$ and a neighborhood $V$ of $\lambda$ in $\mathcal{ML}_g(\mathfrak{n})$, such that if 
\begin{equation}\label{meq} 
\Phi \circ \widehat{Gr}(X^\prime , \lambda^\prime)  = \Phi \circ \widehat{Gr}(X , \lambda)  = \widehat{\rho}  
\end{equation}
for a pair $(X^\prime,  \lambda^\prime) \in U \times V$, then we have 
\begin{equation}\label{feq} 
X=X^\prime \text{ and } \lambda= \lambda^\prime.
\end{equation}

\medskip

Let $X= X_S \cup \mathcal{C}$ where $X_S$ is a hyperbolic surface with geodesic boundary components, and $\mathcal{C}$ is the collection of crown ends. Similarly, we have the decomposition of any crowned hyperbolic surface $X^\prime = X_S^\prime  \cup \mathcal{C}^\prime$.

Given a measured lamination $\lambda$ on $X$, let $\lambda = \lambda_S \cup \mathcal{L}$ where $\lambda_S$ is supported in a compact part of the surface  $X_S$ (away from the crown ends), and $\mathcal{L}$ consists of the finitely many leaves of $\lambda$ that intersect the crown ends $\mathcal{C}$. (Here, we shall ignore the geodesic sides of the crowns, each of which have infinite weight.)

Similarly, we have the disjoint union $\lambda^\prime = \lambda_S^\prime \cup \mathcal{L}^\prime$ on the crowned hyperbolic surface $X^\prime$. \\

In what follows we shall call $\mathcal{L}$ (resp. $\mathcal{L}^\prime$) a \textit{triangulation} of the crowned surface $X$ (resp. $X^\prime$), if there is no geodesic line between the boundary cusps of $\mathcal{C}$ (resp. $\mathcal{C}^\prime$) that is disjoint from the leaves already in the collection.  Note that if $\mathcal{L}$ (resp. $\mathcal{L}^\prime$) is \textit{not} a triangulation, then we can choose an extension to a triangulation by adding geodesics of zero weight between boundary cusps of the crowns. We shall denote the resulting triangulation by $\mathcal{L}_+$  (resp. $\mathcal{L}_+^\prime$).\\

The first step of the proof is to show:

\begin{prop}\label{prop1} Suppose Equation \eqref{meq} holds, where the pair $(X^\prime, \lambda^\prime)$ is sufficiently close to $(X,\lambda)$ in   $\mathcal{T}_g(\mathfrak{n})\times   \mathcal{ML}_g(\mathfrak{n})$, then the crown ends of $X^\prime$ and $X$ are isometric, and $\mathcal{L}^\prime = \mathcal{L}$.
\end{prop}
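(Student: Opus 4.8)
\textbf{Proof proposal for Proposition \ref{prop1}.}
The plan is to exploit the finiteness of the leaves entering the crowns together with the Fock--Goncharov reconstruction of cross-ratios from the decorated monodromy. First I would fix, once and for all, a triangulation $\mathcal{L}_+$ of the crowned surface $X$ extending $\mathcal{L}$ by zero-weight leaves between boundary cusps, and show that for $(X',\lambda')$ sufficiently close to $(X,\lambda)$ the collection $\mathcal{L}'$ is combinatorially ``contained'' in an analogous triangulation $\mathcal{L}_+'$ having the \emph{same} underlying cell structure in each crown; this is possible because the set of isotopy classes of arcs between boundary cusps with bounded weight is discrete, so a small perturbation cannot create or destroy leaves, only vary their (possibly vanishing) weights. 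At this stage I would complete $\mathcal{L}_+$ (resp.\ $\mathcal{L}_+'$) to a triangulation of the whole crowned surface by adding a fixed finite collection of weighted geodesic lines in $X_S$ (resp.\ $X_S'$) near the boundary curves, so that each crown end, together with a collar, is divided into ideal triangles.

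The core step is then, exactly as in the proof of Proposition \ref{dimchar} and in Theorem \ref{nonuniq}: via the Thurston construction of Theorem \ref{thu-con} applied to $\widetilde{P} = \widetilde{\widehat{Gr}(X',\lambda')}$, one obtains a $\rho$-equivariant pleated plane $\Psi:\widetilde{X'}\to\mathbb{H}^3$ whose pleating locus, restricted to (the lifts of) the crown ends and the chosen collar, is bent precisely along the lifts of the finitely many leaves of $\mathcal{L}'$ together with the infinite-weight geodesic sides. Because these lifts triangulate that portion of $\mathcal{P}$ into ideal triangles, I can form the associated collection of bent ideal quadrilaterals $\mathcal{Q}'$, one for each interior edge. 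Now I invoke Theorem~1.1 of \cite{FG} (the Decomposition Theorem, with $G=\pslc$): the complex cross-ratio of each $Q'\in\mathcal{Q}'$ is determined by the decorated monodromy $\widehat{\rho}$ restricted to the relevant boundary data --- here one uses that the decoration at each pole, i.e.\ the ordered tuple $\mathsf{C}_i$ of crown-tips, is part of $\widehat{\rho}$, and that the infinite-weight sides correspond to the fixed flags at the ideal vertices. Hence the complex cross-ratios agree with those obtained from $(X,\lambda)$. From the discussion in \S5.2 on grafting an ideal quadrilateral, a complex cross-ratio $\lambda_Q$ decomposes uniquely (given the combinatorics, i.e.\ which diagonal is graftable, which is pinned down since $(X',\lambda')$ is close to $(X,\lambda)$) into the real cross-ratio $|\lambda_Q|$ of the ``straightened'' quadrilateral together with the bending angle $\arg\lambda_Q\in[0,2\pi)$; the integer ambiguity (multiples of $2\pi$ in the weight) is killed by the closeness hypothesis, since the weights of $\mathcal{L}$ are bounded and a small perturbation cannot jump by $2\pi$. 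Running the inductive argument along the dual tree of the triangulation of each crown (as in the proof of Theorem~\ref{nonuniq}) then recovers the metric of each crown end of $X'$ --- all its cross-ratios, hence its isometry type by Proposition~\ref{lamcrown} and the parametrization of $\mathcal{T}_g(\mathfrak{n})$ --- and simultaneously recovers the weights on every leaf of $\mathcal{L}'$, showing the crown ends of $X'$ and $X$ are isometric and $\mathcal{L}'=\mathcal{L}$.

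The main obstacle I anticipate is handling the \emph{boundary twist parameter} of each crown correctly: Theorem~1.1 of \cite{FG} determines cross-ratios and shear/bending data of the triangulation, but the twisting of leaves around the crown boundary curve contributes an integer (the number of Dehn twists) that is \emph{not} seen by any finite piece of the triangulation data inside the crown alone. To pin this down I would argue that the integer twist is locally constant on $\mathcal{T}_g(\mathfrak{n})\times\mathcal{ML}_g(\mathfrak{n})$ --- it is a discrete invariant of the marked configuration --- and therefore agrees with that of $(X,\lambda)$ once $(X',\lambda')$ is close enough; this is exactly the point where the ``sufficiently close'' hypothesis is essential and cannot be relaxed to a global statement. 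A secondary technical point is to make precise the claim that a small perturbation cannot alter the \emph{combinatorial type} of $\mathcal{L}'$ (number of leaves, their isotopy classes, which have zero weight): this follows from the parametrization of $\mathcal{ML}_g(\mathfrak{n})$ in Proposition~\ref{mln}, under which $\mathcal{L}$ lies in the closure of a unique top-dimensional cell (or a specified face), and a neighborhood of $\lambda$ meets only cells adjacent to that face; I would spell this out using the dual metric graph description from the proof of Proposition~\ref{lamcrown}, where collapsing an edge (a leaf acquiring weight zero) is the only degeneration available nearby.
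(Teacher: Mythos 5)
Your proposal follows essentially the same route as the paper's proof: extend $\mathcal{L}$ to a triangulation whose combinatorial type is stable under small perturbation (using the cell structure of $\mathcal{ML}_g(\mathfrak{n})$), form the ideal quadrilaterals adjacent to each arc of the triangulation, and invoke Theorem 1.1 of \cite{FG} to recover from $\widehat{\rho}$ the complex cross-ratios, hence the real cross-ratios (the isometry type of the crowns) and the grafting weights. Your explicit treatment of the two discrete ambiguities --- the $2\pi\mathbb{Z}$ indeterminacy in each weight and the integer boundary twist --- is precisely the role the ``sufficiently close'' hypothesis plays in the paper, which absorbs both by choosing the neighborhood $V_0$ so that the triangulations of $\mathcal{C}$ and $\mathcal{C}'$ consist of arcs in identical homotopy classes with nearby weights.
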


\begin{proof}

Choose a neighborhood $V_0$ of $\lambda$ in $ \mathcal{ML}_g(\mathfrak{n})$ such that for any $\lambda^\prime \in V_0$, 
there are triangulations $\mathcal{L}^\prime_+$ (resp. $\mathcal{L}_+$) of $\mathcal{C}^\prime$ (resp. $\mathcal{C}$), such that the homotopy classes of the arcs in the triangulations are identical, and the corresponding weights are close. (For the notation used here, see the paragraph preceding this Proposition.)  

This is possible since the space  $ \mathcal{ML}_g(\mathfrak{n})$ is a cell-complex where the finitely many cells correspond to the different topological types of the dual metric graphs (\textit{c.f.} the proofs of Propositions \ref{lamcrown} and  \ref{mln}). The homotopy classes of the  arcs in the triangulation determines this topological type of the dual metric graph; once this is fixed, two laminations being close in  $ \mathcal{ML}_g(\mathfrak{n})$ implies that the corresponding weights on the arcs (that determine the lengths of the finite edges) are close. 

Throughout we shall assume that $(X^\prime, \lambda^\prime)$ is close enough to $(X,\lambda)$ such that $\lambda^\prime \in V_0$.\\

Consider the lifts of  the crown ends $\mathcal{C}$ to the universal cover of $X$, together with the lifts of the arcs in  $\mathcal{L}_+$.  These are invariant under a Fuchsian group $\Gamma$; we choose a fundamental domain for this action on the combined set of crown ends and lifts of arcs. Namely, we get 

\begin{enumerate}
\item[(i)] a finite collection crown ends $\widetilde{\mathcal{C}}_1, \widetilde{\mathcal{C}}_1, \ldots, \widetilde{\mathcal{C}}_N$ and a  corresponding collection of fundamental domains $F_1,F_2,\ldots, F_N$ for the $\mathbb{Z}$-action on each of these crowns, and
\item[(ii)] a finite collection of arcs $\mathcal{A}_+$  (that are lifts of arcs of $\mathcal{L}_+$) between the ideal vertices determined by the boundary cusps of $F_1,F_2,\ldots, F_N$, 
\end{enumerate}
such that any other lift of an arc in $\mathcal{L}_+$ is taken to an arc in $\mathcal{A}_+$ by a unique element of $\Gamma$. 

Similarly, we have a finite collection of arcs $\mathcal{A}^\prime_+$ in the universal cover of $X^\prime$, that is the fundamental domain for the action of a Fuchsian group $\Gamma^\prime$ on the lifts of  $\mathcal{L}_+^\prime$.

Moreover, since $\mathcal{L}_+$ (resp. $\mathcal{L}_+^\prime  $ )  is a triangulation,  the collection $\mathcal{A}_+$ (resp. $\mathcal{A}^\prime_+$) is maximal, in the sense that we cannot add any other geodesic line to the collection that are between a pair of ideal vertices determined by  $F_1,F_2,\ldots F_N$ (resp $F_1^\prime ,F_2^\prime,\ldots F_N^\prime$) and are disjoint to the ones already in $\mathcal{A}_+$ (resp. $\mathcal{A}^\prime_+$). In particular, the arcs in $\mathcal{A}_+$ and $\mathcal{A}^\prime_+$ bound ideal triangles.\\

Now, as in the proof of Theorem \ref{nonuniq}, we consider a collection of ideal quadrilaterals $\mathcal{Q}$ determined by the geodesic lines in $\mathcal{A}_+$, namely, for each line in $\mathcal{A}_+$ the two adjacent ideal triangles determine an ideal quadrilateral $Q \in \mathcal{Q}$. Note that each pair of quadrilaterals in $\mathcal{Q}$ are either disjoint, or overlap along an ideal triangle.

Recall that by Theorem 1.1 of \cite{FG},  the decorated monodromy $\hat{\rho}$ uniquely determines the (complex) cross-ratios of quadruples $\{c_j,c_k,c_l,c_m\}$ of crown tips determined by the image of the lifts  of the crown ends by the developing map.  

In particular, the (complex) cross-ratio $\lambda_Q$ of an ideal quadrilateral in $Q\in \mathcal{Q}$ after grafting is determined by $\hat{\rho}$. Just as in the proof of Theorem \ref{nonuniq}, this uniquely specifies the real cross-ratio $\lvert \lambda_Q \rvert$ as well as the weight $w(Q)$ of the corresponding arc (which is a diagonal of $Q$). In particular, the weights on the arcs in $\mathcal{A}\subset \mathcal{A}_+$ and $\mathcal{A}^\prime\subset \mathcal{A}_+^\prime$, and hence $\mathcal{L}$ and $\mathcal{L}^\prime$, are uniquely determined and are equal. 
Thus $\mathcal{L} = \mathcal{L}^\prime$.

Moreover, the (real) cross-ratio  $\lvert \lambda_Q \rvert$ for each of the quadrilaterals $Q\in \mathcal{Q}$  uniquely determines the ideal vertices of  $F_1,F_2,\ldots, F_N$ as well as $F_1^\prime,F_2^\prime,\ldots, F_N^\prime$; this shows that the crown ends in $\mathcal{C}$ are isometric to those in $\mathcal{C}^\prime$.
\end{proof}

\medskip

To complete the proof, we need to show:

\begin{prop}\label{prop2} Suppose Equation \eqref{meq} holds, where the pair $(X^\prime, \lambda^\prime)$ is sufficiently close to $(X,\lambda)$ in   $\mathcal{T}_g(\mathfrak{n})\times   \mathcal{ML}_g(\mathfrak{n})$, 
then the hyperbolic surfaces-with-boundary $X_S$ and $X_S^\prime$ are isometric, and  $\lambda_S = \lambda_S^\prime$.
\end{prop}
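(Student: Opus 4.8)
The plan is to apply the Ehresmann--Thurston deformation principle to the projective structure induced on the compact surface-with-boundary obtained by deleting the crown ends, and then to feed the resulting equality of projective structures into Thurston's parametrization, Theorem \ref{thu-con}, in order to recover the hyperbolic structure and grafting lamination of that piece. By Proposition \ref{prop1} we may assume throughout that the crown ends of $X$ and $X'$ are isometric and that $\mathcal{L}=\mathcal{L}'$; in particular the finitely many leaves entering the crowns, and the way each of them meets a crown boundary $\gamma$, coincide.

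First I would fix the compact surface-with-boundary $\Sigma$ obtained from $\hat S$ by removing the open crown ends, so that $\Sigma$ is homeomorphic to both $X_S$ and $X_S'$, and observe that $P=\widehat{Gr}(X,\lambda)$ restricts to a complex projective structure $P_\Sigma$ on $\Sigma$: cutting the developed image along $\gamma$ yields a projective structure on the compact surface $\Sigma$ whose developing map is the restriction of that of $P$, and whose holonomy is $\rho|_{\pi_1(\Sigma)}$; similarly $P'=\widehat{Gr}(X',\lambda')$ yields $P_\Sigma'$. Since $\Phi(P)=\Phi(P')$, the representations $\rho$ and $\rho'$ are conjugate, hence so are $\rho|_{\pi_1(\Sigma)}$ and $\rho'|_{\pi_1(\Sigma)}$; and since grafting varies continuously (the last statement of Theorem \ref{thu-con}) and $(X',\lambda')$ is close to $(X,\lambda)$, the structure $P_\Sigma'$ is close to $P_\Sigma$ in the deformation space of projective structures on the compact manifold-with-boundary $\Sigma$. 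The Ehresmann--Thurston principle (Theorem I.1.7.1 of \cite{CEG}) says that the holonomy map from this deformation space to the character variety of $\Sigma$ is a local homeomorphism, so for $(X',\lambda')$ sufficiently close to $(X,\lambda)$ we conclude $P_\Sigma'=P_\Sigma$, via an identification isotopic to the identity.

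Next I would recover the data. Passing to the universal cover $\widetilde{\Sigma}$, a topological disk, the lift $\widetilde{P_\Sigma}$ is a simply-connected projective surface that is not projectively equivalent to $\C$ or to the universal cover of $\cp\setminus\{0,\infty\}$, since $\pi_1(\Sigma)$ is nonabelian, so Theorem \ref{thu-con} applies and produces a unique measured lamination $L$ on $\mathbb{D}$, invariant under a Fuchsian group $\Gamma$ (the image of the associated $\rho_\R$), grafting along which gives $\widetilde{P_\Sigma}$. The corresponding hyperbolic surface with geodesic boundary --- the convex core of $\mathbb{D}/\Gamma$ --- is precisely $X_S$ for $P$ and $X_S'$ for $P'$, and the descent of $L$ restricts on that core to the compactly supported lamination $\lambda_S$, respectively $\lambda_S'$, together with the already-determined leaves of $\mathcal{L}$ that cross $X_S$. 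Since $P_\Sigma=P_\Sigma'$ and the Thurston construction is canonical and equivariant, these outputs coincide: $X_S$ and $X_S'$ are isometric and $\lambda_S=\lambda_S'$.

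The hard part will be Steps 1--2: giving a clean meaning to the restriction of the meromorphic projective structure to $\Sigma$ --- in particular checking that cutting along the grafted image of $\gamma$ gives a genuine complex projective structure on $\Sigma$ with holonomy $\rho|_{\pi_1(\Sigma)}$ --- and verifying that the Ehresmann--Thurston conclusion is compatible with the marking data defining $\TT_g(\mathfrak{n})$, so that in particular the twist parameters at the crown boundaries are recovered. The latter should follow from the fact that the identification between $P_\Sigma$ and $P_\Sigma'$ is close to the identity, hence does not change the homotopy classes (relative to the boundary) of the arcs used to define those twists; but this bookkeeping, together with the surface-with-boundary form of Theorem \ref{thu-con} obtained by passing to the universal cover and taking the convex core, is where the care is needed.
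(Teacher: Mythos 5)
Your proposal is correct and follows essentially the same route as the paper: restrict $P$ and $P'$ to the compact surface-with-boundary obtained by deleting the crowns, invoke the Ehresmann--Thurston principle (Theorem I.1.7.1 of \cite{CEG}) to conclude the two restricted projective structures are equivalent, and then apply the Thurston construction of Theorem \ref{thu-con} to recover $X_S=X_S'$ and $\lambda_S=\lambda_S'$, using Proposition \ref{prop1} to handle the leaves of $\mathcal{L}$ crossing into $X_S$. The one point to phrase more carefully (which you already flag) is that for a surface-with-boundary the holonomy map is not literally a local homeomorphism; the paper instead works with developing maps fixed on a collar neighborhood of the boundary and uses the local-injectivity form of the principle, namely that a nearby developing map with the same holonomy is equivariantly isotopic to the given one.
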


\begin{proof} 
Let $\mathcal{L}_S$ and $\mathcal{L}_S^\prime$ be the geodesic arcs of $\mathcal{L} \cap X_S$ and $\mathcal{L}^\prime \cap X^\prime$, respectively. We already know from Proposition \ref{prop1} that the arcs in $\mathcal{L}_S$ and $\mathcal{L}_S^\prime$  have  identical weights and determine the same homotopy classes.

Let $\rho\in \bigchi_{g,k}$ be the representation in the usual $\pslc$- character variety of the punctured (or bordered) surface obtained by ``forgetting" the decorations at the punctures (\textit{c.f.} Definition \ref{decv}). Thus, $\rho$ is the image of $\hat{\rho}$ under the forgetful map
\begin{equation*}
p: {\widehat{\bigchi}}_{g,k}(\mathfrak{n}) \to \bigchi_{g,k}.
\end{equation*}

We then apply the Ehresmann-Thurston principle for manifolds with boundary (see, for example, Theorem I.1.7.1 of \cite{CEG} or Proposition 1 of \cite{Danciger}):  

Let $\mathcal{D}(S_{g,k}, \cp)$ be the space of developing maps for projective structures on the surface-with-boundary $S^0$ (homeomorphic to $S_{g,k}$) that are fixed on a collar neighborhood of the boundary.  This space is equipped with the usual compact-open topology. Then the Ehresmann-Thurston principle implies that  there is a neighborhood $W$ of the developing map $\bar{f}$  for the restriction of $P$ (that we fixed at the beginning of the section) to $S^0$, such that any developing map in $W$ that has the same holonomy $\rho$ as $P$, is equivariantly isotopic  to $\bar{f}$.  

In particular, in the space of projective structures on $S^0$, the restrictions of $P$ and $P^\prime = \widehat{Gr}(X^\prime,\lambda^\prime)$ are equivalent. Thus the Thurston construction in \S2.4 when applied to the corresponding developing maps, yields isometric hyperbolic surfaces $X_S$ and $X^\prime_S$, and identical grafting laminations $\lambda\cap X_S$ and $\lambda^\prime \cap X_S^\prime$. Since we already know the arcs in $\mathcal{L} \cap X_S$ and $\mathcal{L}^\prime \cap X^\prime$ have  identical weights and determine the same homotopy classes, we conclude that  $\lambda_S = \lambda_S^\prime$. 

This completes the proof.
\end{proof}

By Propositions \ref{prop1} and  \ref{prop2}, we conclude that if $(X^\prime,  \lambda^\prime)$ is sufficiently close to $(X,\lambda)$, and  Equation \eqref{meq} holds, then in fact  Equation \eqref{feq} is true, namely $X=X^\prime$ and $\lambda = \lambda^\prime$.
This shows that the monodromy map $\Phi$ is locally injective. Since we already know that  (a) $\Phi$ is continuous, (b)  the image lies in the smooth part of the decorated character variety, and (c) the dimensions at a point $P$ of  $\mathcal{P}_g(\mathfrak{n})$ and a smooth point of ${\widehat{\bigchi}}_{g,k}(\mathfrak{n})$ are identical (see Proposition \ref{dimchar}), we conclude that $\Phi$ is a local homeomorphism from the invariance of domain. 

This proves Theorem \ref{thm3}.

\appendix\section{Matching laminations}

The proof of Theorem \ref{mln} for parametrizing the space $\mathcal{ML}_g(\mathfrak{n})$ relies on the fact that we know parametrizations of the space of measured laminations on a surface-with-boundary, and a crown separately. In this appendix we give details of how we match two such laminations together to  obtain one on the entire crowned hyperbolic surface.

\subsubsection*{Splitting arcs}

We start with the following notion:

\begin{defn}[Properly homotopic arcs]\label{phe} Two arcs $\alpha_1, \alpha_2$ on a hyperbolic crown are said to be {\it properly homotopically equivalent} if both have one end-point on the boundary geodesic $\gamma$, both arcs end at the same crown-tip, and  both complete the same number of integer twists around $\gamma$. 

 Similarly, two arcs on a crowned hyperbolic surface are said to be {\it properly homotopically-equivalent} if 
 \begin{enumerate}
 \item their restrictions to the hyperbolic crown end are {\it properly homotopically-equivalent} in the sense above, and 
 \item their restrictions to the hyperbolic surface-with-boundary in the complement to the crown are homotopic arcs, where the homotopy is allowed to move endpoints on the boundary geodesic. 
 \end{enumerate}

 \end{defn} 

In what follows, a \textit{splitting} of a weighted isolated  arc  in a measured lamination shall refer to a replacement of the arc by several  properly homotopic  arcs (see Figure 9)  with weights that have the same sum. 

Note that for a crown end with more than one boundary cusp, this replacement can be done simultaneously for finitely many geodesic arcs  that cross the crown boundary and proceed to the boundary cusps.  Moreover, to have the correct \textit{marking} on the crown, we also need to maintain the (integer) number of twists of the arcs around the boundary component.

\begin{figure}
  \centering
  \includegraphics[scale=0.4]{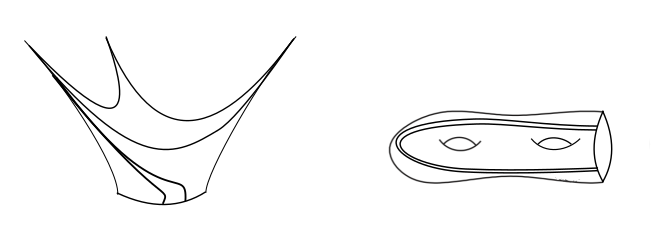}\\
  \caption{A pair of properly homotopic arcs on a crown (left) and a surface with boundary (right) obtained by splitting a single arc.}
\end{figure}

\subsubsection*{Determining the matching}

We shall denote the surface-with-boundary by ${S}$  the hyperbolic crown by $\mathcal{C}$, and the boundary of $S$ by $\gamma$. For simplicity of exposition, we assume here that $k=1$, that is, there is a single crown end; in the case $k>1$,  we can consider $\gamma$ to be a collection of closed geodesics, and $\mathcal{C}$ to a disjoint collection of hyperbolic crowns, and our argument holds for such a disconnected surface or boundary as well.

The identification of $S$ and $\mathcal{C}$ along $\gamma$  by a hyperbolic isometry  yields a hyperbolic crowned surface that we shall denote by $\hat{S}$.  Note that the boundary twist parameter (see \S2.4) is crucial to uniquely specify this identification.\\

Given measured laminations on $S$ and $\mathcal{C}$, such that the transverse measure of $\gamma$ induced by them are identical, we wish to construct a combined lamination on  $\hat{S}$.

The issue is that the leaves incident on the (common) boundary $\gamma$  might not match  --  indeed, the numbers of arcs incident on $\gamma$ from either side, or their endpoints, need not be same.

To resolve this, our strategy then would be to split these arcs on the subsurfaces and match the resulting arcs, such that the restriction of the resulting arcs on either subsurface still defines the same collection of homotopy classes of arcs. 
In this matching we also need to distribute the weights; for this, we shall need the following lemma. 

\begin{lem}\label{match} Let $R$ be a rectangle, with $n$ disjoint weighted arcs with weights $a_1,a_2,\ldots a_n$ (from left to right) incident on the top edge from outside $R$, and $m$ disjoint weighted arcs with weights $b_1,b_2,\ldots b_m$ (from left to right) incident on the bottom edge from outside $R$.  Suppose the total weights of the arcs incident on the top and bottom edges are the same, that is, $\sum\limits_{i=1}^n a_i = \sum\limits_{j=1}^m b_j$. 

 Then there is a unique way to split the arcs, and redistribute weights, such that 
 \begin{itemize}
 \item[(i)] the resulting arcs can be paired by  a collection of parallel arcs $\Gamma$ in $R$, and  paired arcs have equal weights, and
 \item[(ii)] no two arcs in $\Gamma$ connect to arcs arising from the same splitting, at both the top edge and bottom edge.
 \end{itemize} 
\end{lem}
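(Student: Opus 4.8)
The plan is to reduce the lemma to the elementary fact that two partitions of an interval of the same total length possess a unique coarsest common refinement. First I would set $L=\sum_{i=1}^n a_i=\sum_{j=1}^m b_j$ and identify the top edge of $R$ with $[0,L]$ (left to right) so that the $n$ arcs incident from above cut $[0,L]$ into consecutive subintervals of lengths $a_1,\dots,a_n$, with division points $0=s_0<s_1<\cdots<s_n=L$, $s_i-s_{i-1}=a_i$. Identifying the bottom edge with the same interval $[0,L]$ via the orientation-matching vertical projection across $R$, the $m$ arcs from below give division points $0=t_0<t_1<\cdots<t_m=L$, $t_j-t_{j-1}=b_j$. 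Write $P=\{s_0,\dots,s_n\}\cup\{t_0,\dots,t_m\}=\{0=u_0<u_1<\cdots<u_N=L\}$ for the superposition of the two sets of division points.

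Next I would exhibit the splitting and matching directly. For each $k\in\{1,\dots,N\}$ the subinterval $[u_{k-1},u_k]$ lies inside a unique top interval $[s_{i-1},s_i]$ and a unique bottom interval $[t_{j-1},t_j]$; let $\gamma_k\in\Gamma$ be the parallel (vertical) arc in $R$ joining the corresponding segment of the top edge to the corresponding segment of the bottom edge, carrying weight $u_k-u_{k-1}>0$. These arcs are pairwise disjoint because $k\mapsto[u_{k-1},u_k]$ is monotone for the left-to-right order on each edge, so disjoint parallel arcs realize the matching. The arcs $\gamma_k$ whose top interval lies in $[s_{i-1},s_i]$ form a consecutive block of $\Gamma$ whose weights sum to $a_i$; declaring the $i$-th top arc to be split into exactly these pieces, and symmetrically for the bottom arcs, gives property (i). For (ii): if two consecutive arcs $\gamma_k,\gamma_{k+1}$ both attached to pieces of one original top arc and to pieces of one original bottom arc, then $u_k$ would be interior to some $[s_{i-1},s_i]$ and to some $[t_{j-1},t_j]$, hence equal to no $s_i$ and no $t_j$, contradicting $u_k\in P$; since the pieces coming from a fixed original arc occupy consecutive positions along $\Gamma$, a general violating pair would force a consecutive violating pair, so (ii) holds.

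For uniqueness I would argue that any admissible splitting–matching, read left to right along the disjoint parallel arcs of $\Gamma$, yields a partition of $[0,L]$ whose part lengths are the common weights of the matched pieces. Property (i) forces the pieces of the $i$-th top arc to occupy a consecutive block with total length $a_i$, so every $s_i$ is a division point of this partition, and likewise every $t_j$; thus $P$ is contained in its set of division points. Property (ii) says exactly that no interior division point fails to equal some $s_i$ or some $t_j$, so the division points are contained in $P$. Hence the partition is the one induced by $P$, and the splitting and matching coincide with the construction above. The case $k>1$ (several crown ends, so $\gamma$ a union of geodesics) follows by applying the one-rectangle statement componentwise, and the integer twist numbers about $\gamma$ are preserved since splitting replaces an arc only by properly homotopic arcs.

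I expect no serious obstacle: the only delicate points are notational — pinning down precisely what ``arcs arising from the same splitting'' means, and noting that the monotonicity of the induced left-to-right orders guarantees the parallel arcs can be realized disjointly in $R$ — and both are routine once the interval model for the two edges is fixed.
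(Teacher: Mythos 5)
Your proof is correct, but it takes a different route from the paper's. The paper proves the lemma by induction on $n+m$ via a greedy left-to-right merge: compare the leftmost unmatched weights $a_1$ and $b_1$, match them if equal, and otherwise split the heavier arc and recurse; uniqueness is then argued by showing any admissible matching must reproduce this algorithm strand by strand. You instead model both edges as the interval $[0,L]$ partitioned into parts of lengths $a_i$ (resp.\ $b_j$) and observe that the matching is exactly the common refinement of the two partitions: the superposed division-point set $P$ gives existence, the inclusion $P\subseteq\{\text{division points}\}$ is forced by condition (i) (consecutiveness of the pieces of one original arc plus weight preservation), and the reverse inclusion is precisely condition (ii). The two arguments construct the same object, but your partition-refinement formulation makes the uniqueness direction essentially immediate, whereas the paper's inductive uniqueness argument is more delicate (it has to rule out alternative first moves at each step). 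One point worth making explicit in your write-up: the left-to-right order of the arcs of $\Gamma$ read along the top edge agrees with that read along the bottom edge because disjoint arcs joining opposite sides of a rectangle cannot cross --- you allude to this via ``monotonicity,'' and it is exactly what lets you identify an admissible matching with a single partition of $[0,L]$. Your closing remark about several crown ends and twist numbers is not part of this lemma (it belongs to Proposition \ref{strands}), but it does no harm.
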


\medskip 

\textit{Remarks.} 1.  A ``splitting" of an arc above refers to replacing an arc by finitely many disjoint copies that  then acquire a left-right ordering.  Any pair of such copies is then said to arise from the ``same" splitting. 

2. We shall call the final matching obtained in this Lemma a \textit{minimal matching} in light of property (ii) above, which ensures there are no unnecessary splittings. 

\medskip

\begin{proof}

Without loss of generality,  we shall assume $n\leq m$. The proof proceeds by induction on $n+m$.

Note that if $n=1$, then there is a unique arc $\alpha$  incident on the top edge. Indeed, then there is a matching:  split the arc $\alpha$  into exactly $m$ copies,  and assign weights $b_1,b_2,\ldots b_m$ to them (from left to right), and 
 connect each of the resulting arcs incident on the top edge, with the $m$ arcs incident on the bottom edge. 
It is easy to see that (i) and (ii) are satisfied. There is a unique such matching, because none of the $m$ arcs incident on the bottom edge can be split; the parallel arcs in $R$ continuing connecting to them would necessarily connect to arcs obtained by a splitting of $\alpha$ on the top edge, violating (ii). 

The inductive step is as follows: 

Consider the first arcs from the left incident on the top and bottom edges, denoted by $\alpha$ and $\beta$ respectively. Note that the weight of $\alpha$ is $a_1$ and the weight of $\beta$ is $b_1$.  There are three cases:

\textit{Case 1:}  If $a_1=b_1$, then no splitting of arcs is required: we connect the endpoints of $\alpha$ and $\beta$ by an arc in $R$. The number of unpaired arcs on the top and bottom edges each reduce by $1$.

\textit{Case 2:}  If $a_1>b_1$, then we split $\alpha$ into two arcs $\alpha_l$ and $\alpha_r$ and assign weights $b_1$  and $a_1-b_1$ to the left and right arcs, respectively.  We pair the left arc with $\beta$, and consider the remaining (unpaired) arcs.  Notice that there now $n$ unpaired arcs incident on the top edge, and $m-1$ unpaired arcs incident on the bottom edge. 

\textit{Case 3:}  If $a_1<b_1$, we split $\beta$  into two arcs of weights $a_1$ and $b_1-a_1$, and pair the left arc with $\alpha$. This time there are $n-1$ unpaired arcs incident on the top edge, and $m$ unpaired arcs incident on the bottom edge.

In all cases, the total number of unpaired arcs on the top and bottom edges have reduced by at least $1$, and the induction is complete. 

Note that  by construction, (i) is satisfied by this matching.  We now explain why property  (ii) also holds:
Recall in Case 2 we split the arc $\alpha$  and pair $\alpha_l$ with $\beta$, then even if $\beta$ had been created in a splitting in the previous step, there are no other arcs from that splitting to the right of $\beta$. Thus, in the next step, the unmatched arc $\alpha_r$ (or a splitting of it) is necessarily paired with an arc on the bottom edge that does not arise in the same splitting as $\beta$.  
A similar argument holds in Case 3, where $\beta$ is split. 

It is also easy to see that such a matching is unique: indeed, in any matching, there is a leftmost strand $\gamma$ through $R$ that connects an arc $\alpha$  incident on the top edge to an arc $\beta$ of the bottom edge. Note that $\alpha$ might have arisen from a splitting of one of the original arcs incident on the top edge, or $\beta$ might have arisen from a splitting, but not both, since otherwise (ii) would be violated. We can then conclude the first matching must have come from one of the 3 cases above. Repeating the same argument for the next strand, we see that the entire matching must have been obtained by the algorithm above. 
\end{proof}

\begin{figure}
  \centering
  \includegraphics[scale=0.3]{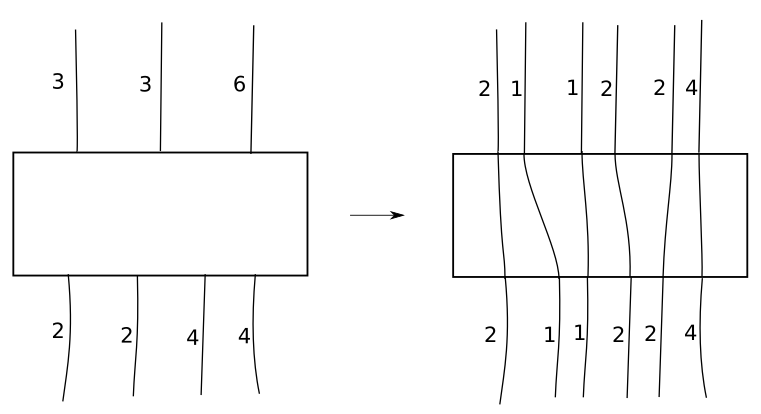}\\
  \caption{An example of a minimal matching (Lemma \ref{match}).}
\end{figure}

We now return to the situation where we need to match arcs from the measured lamination on the hyperbolic crown $\CC$, and surface-with-boundary $S$, that share a common geodesic boundary $\gamma$.

Let $\mathcal{G}= \{g_1, g_2,\ldots, g_m\}$ be the collection of weighted arcs on the hyperbolic crown $\mathcal{C}$ from the boundary cusps (labeled $1,2,\ldots, m$)  to the crown boundary.

Let the measured lamination on $S$ be $\lambda \cup \LL$, where $\lambda$ is a measured lamination that is disjoint from $\gamma$, and $\LL$ is a (non-empty) collection of weighted arcs incident on $\gamma$.\\

Recall from \S3.2 that the boundary twist parameter $\tau$ of the crown $\mathcal{C}$ can be thought of as a choice of a basepoint $p$ on the boundary curve $\gamma$, together with an integer twist parameter $t$ that records the number of topological Dehn-twists around $\gamma$ of arcs intersecting it (in our case, the arcs from $\mathcal{G}$ and $\LL$).
Recall that there is also ``canonical" basepoint $p_0$ for the crown. The remaining part of the twist parameter in fact determines the transverse measure of the arc of $\gamma$ between $p_0$ and $p$.

In the proof of the next Proposition, we shall use the point $p$ to cut up $\gamma$ into an interval, that is, it shall determine the fundamental domain for the action of the infinite cyclic group  corresponding to $\gamma$, on the universal cover of the crown. 

In case that $p$ coincides with an endpoint of one of the geodesic arcs in $\mathcal{G}$ or $\LL$, we split the corresponding arc such that the transverse measure of the arc on $\gamma$ between $p$ and $p_0$ remains the same, but the new arcs have endpoints distinct from $p$.

We shall assume that $p$ is distinct from the endpoints of the arcs in  $\LL$; else, we can change the arcs by a proper homotopy (as in Definition \ref{phe}) by sliding the endpoints along $\gamma$.  \\

See Definition \ref{phe} for the notion of ``properly homotopic" used below. 
We shall now prove:

\begin{prop}\label{strands} There is a unique way to split the weighted arcs in $\mathcal{G}$ and  $\LL$,  redistribute the weights and match the resulting arcs, such that:
\begin{itemize}
\item[(a)] arcs that are matched have the same weight, 
\item[(b)] a ``splitting" of an arc  $\alpha$ replaces it by properly homotopic copies (from boundary to the same boundary cusp in the case $\alpha \in \mathcal{G}$, and from boundary to boundary in case $\alpha \in \mathcal{L}$),
\item[(c)] after the redistribution of weights, the total weight of all the arcs arising from a splitting of an arc $\alpha$ (in $\mathcal{G}$ or $\LL$)  equals the original weight of $\alpha$. 
\end{itemize}

This results in a new collection of disjoint arcs $\hat{\LL}$ on the crowned surface $\hat{S}$, such that no two arcs of $\hat{\LL}$ are properly homotopic. 
Moreover, $\hat{\LL}$  together with $\lambda$, is a measured lamination on $\hat{S}$. 
\end{prop}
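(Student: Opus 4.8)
The plan is to reduce the gluing problem along the common boundary geodesic $\gamma$ to the combinatorial statement of Lemma \ref{match}, applied in the universal cover. First I would pass to the universal cover of the crowned surface $\hat{S}$, where $\gamma$ lifts to a $\mathbb{Z}$-invariant family of bi-infinite geodesic lines. Using the chosen basepoint $p$ on $\gamma$ (and its lifts), I would cut along one such lift to obtain a fundamental domain for the $\mathbb{Z}$-action; concretely, this presents a neighborhood of $\gamma$ as an infinite strip, and the arcs of $\mathcal{G}$ and $\mathcal{L}$ incident on $\gamma$ lift to finitely many arcs (up to the $\mathbb{Z}$-action) entering the strip from the two sides. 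The hypothesis that the transverse measures induced on $\gamma$ agree means that, within one fundamental period, the total weight of arcs coming from the crown side equals that coming from the surface-with-boundary side. This is precisely the hypothesis $\sum a_i = \sum b_j$ of Lemma \ref{match}.

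Next I would apply Lemma \ref{match} to the rectangle $R$ obtained as the fundamental domain (a neighborhood of one period of $\gamma$), with the top edge carrying the arcs from $\mathcal{L}$ and the bottom edge the arcs from $\mathcal{G}$ (in the linear order induced by the orientation of $\gamma$, starting from the cut point determined by $p$). The lemma furnishes a unique minimal matching: a splitting of the incident arcs with redistributed weights, together with a family of parallel strands in $R$ pairing arcs of equal weight, subject to property (ii) that no strand connects two arcs from the same splitting. I would then transport this matching back by the $\mathbb{Z}$-action to obtain a $\mathbb{Z}$-invariant matching along the whole lift of $\gamma$, which descends to $\hat{S}$. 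Pairing up arcs of equal weight and concatenating them across $\gamma$ yields the collection $\hat{\mathcal{L}}$ of arcs on $\hat{S}$; properties (a), (b), (c) are exactly the conclusions of Lemma \ref{match} reinterpreted on $\hat{S}$, and property (ii) of that lemma translates to the statement that no two arcs of $\hat{\mathcal{L}}$ are properly homotopic in the sense of Definition \ref{phe} (a proper homotopy between two of them would force a strand in $R$ joining two arcs from the same splitting).

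To finish, I would check that $\hat{\mathcal{L}} \cup \lambda$ is genuinely a measured lamination on $\hat{S}$: the arcs are disjoint by construction (the matching strands in $R$ are parallel and disjoint, and away from $\gamma$ the arcs are subarcs of the original disjoint laminations), each arc is a geodesic once we realize it in the hyperbolic metric on $\hat{S}$ (replacing each concatenated arc by its geodesic representative in its proper homotopy class, which is bi-infinite and exits into boundary cusps), and the weights assemble into a transverse measure because they agree on $\gamma$ and are consistent with the transverse measures of $\lambda$ and of the pieces on either side. Finally, uniqueness of the construction follows from the uniqueness clause in Lemma \ref{match} together with the fact that the basepoint $p$ (part of the twist data) canonically fixes how $\gamma$ is cut, so there is no further choice.

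The main obstacle I anticipate is the bookkeeping around the integer twist parameter and the basepoint: one must be careful that the splittings respect the requirement in (b) that copies be \emph{properly} homotopic (Definition \ref{phe}), i.e.\ complete the same number of twists around $\gamma$, and that the choice of $p$ — after possibly splitting an arc whose endpoint coincides with $p$, as arranged in the paragraph preceding the proposition — yields a well-defined linear ordering of the incident arcs so that Lemma \ref{match} applies verbatim. Checking that this ordering and the resulting matching are independent of the auxiliary splitting used to avoid $p$, and that the descended object on $\hat{S}$ does not depend on which lift of $\gamma$ we cut along, is the delicate point; everything else is a direct translation of the already-proved combinatorial lemma.
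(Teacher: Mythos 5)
Your overall strategy --- reduce to Lemma \ref{match} along a fundamental domain for $\gamma$ and invoke its uniqueness --- is the right starting point, but as written it has a genuine gap: you treat the gluing as a \emph{single} application of Lemma \ref{match} in an annular neighborhood of $\gamma$, with the arcs of $\LL$ on one edge and those of $\mathcal{G}$ on the other. The problem is that each arc $l\in\LL$ is an arc of $S$ from $\gamma$ back to $\gamma$, so it meets the cutting circle at \emph{two} points and contributes two separate half-arcs to your rectangle. A single pass of Lemma \ref{match} will split the half-arc at one endpoint $p_i$ of $l$ into pieces with some weights, and the half-arc at the other endpoint $p_j$ into pieces with \emph{a priori} different cardinality and weights; nothing in the one-rectangle construction forces these two splittings to be consistent. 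Moreover, even after matching crown-arcs to half-arcs, ``concatenating across $\gamma$'' only produces arcs that run from a boundary cusp into the interior of $S$ and stop at the midpoint of $l$; to obtain genuine cusp-to-cusp arcs of $\hat\LL$ you must additionally pair the pieces arriving at $p_i$ with those arriving at $p_j$ along $l$ itself.

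This is exactly what the paper's proof supplies via a \emph{two-stage} construction. The first stage is essentially your step: Lemma \ref{match} applied in the cut-open annular neighborhood $R$ of $\gamma$, with $\mathcal{G}$ on one edge and the half-arcs of $\LL$ on the other, producing for each endpoint $p_i$ a set $\mathcal{G}_i'$ of crown-arcs attached there (with the key consequence, from property (ii), that the arcs of each $\mathcal{G}_i'$ go to distinct cusps). The second stage then applies Lemma \ref{match} \emph{again}, once for each $l\in\LL$, inside a rectangular neighborhood $R_l\subset S$ of $l$ whose two edges carry $\mathcal{G}_i'$ and $\mathcal{G}_j'$; this simultaneously splits $l$ and refines the $\mathcal{G}_i'$, and it is this second application (together with the distinct-cusps observation from the first stage) that yields both the well-definedness of $\hat\LL$ and the fact that no two of its arcs are properly homotopic. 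The uniqueness argument likewise must be run at both stages. So your plan needs the second matching step across each $R_l$; without it the construction is not well-defined and the claimed translation of property (ii) into ``no two arcs of $\hat\LL$ are properly homotopic'' does not go through.
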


\begin{proof}
	
We describe the splitting and matching of the arcs in two stages. Lemma \ref{match} will be used several times. 

Let $\mathfrak{P} = \{p_1, p_2,  \ldots , p_N\}$ denote the endpoints of the arcs in $\LL$ on $\gamma$, where these points are ordered in the orientation of $\gamma$ induced from the orientation on the surface, and the basepoint $p$  on $\gamma$ lies between $p_N$ and $p_1$ (see the discussion above, preceding this Proposition). Note that each arc in $\LL$ actually determines two endpoints in the set $\mathfrak{P}$, and  this determines a pairing of the elements of $\mathfrak{P}$. \\

\textit{First stage:} 
Consider a thin closed annular neighborhood of $\gamma$, and cut along a geodesic arc perpendicular to $\gamma$ and passing through $p_0$, to obtain a rectangle $R$.
The arcs of $\mathcal{G}$ are incident on the top edge of $R$, and  half-arcs of $\mathcal{L}$ are incident on the bottom edge. (The half-arcs are paired to give the arcs in $\LL$, but we do not consider that fact in this first stage. The weights on the half-arcs are the same as that of the arc of $\LL$ they belong to.) 
The total weight of the arcs incident on the top and bottom edge of $R$ are the same by our assumption. Hence we can apply Lemma \ref{match}, which determines a unique minimal matching involving a splitting of the arcs of $\mathcal{G}$, and the half-arcs from $\mathcal{L}$ incident on the bottom edge of $R$. 

Let $\mathcal{G}^\prime $ be the set of arcs obtained by this initial splitting of the arcs in $\mathcal{G}$.  To each point  $p_i \in \mathfrak{P}$ we associate a subset  $\mathcal{G}_i^\prime\subset \mathcal{G}^\prime$ as follows:   if $l_i^+$ is the half-arc of an arc in $\LL$ incident on $\gamma$ at $p_i$, then  $\mathcal{G}_i^\prime$ comprises  all the arcs of $\mathcal{G}^\prime$ that are matched with a splitting of $l_i^+$.  

Note that property (ii) of the minimal matching (see Lemma \ref{match}) ensures that for any fixed $1\leq i\leq N$, the arcs of $\mathcal{G}_i^\prime$ are asymptotic to distinct cusps.\\

\begin{figure}
  \centering
  \includegraphics[scale=0.45]{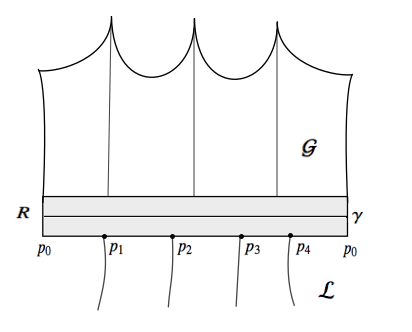}\\
  \caption{Lemma \ref{match} is used to determine a preliminary splitting of the arcs of $\mathcal{G}$ in the first stage of the construction.}
\end{figure}

\textit{Second  stage:} 
Now consider two points of $\mathfrak{P}$ that are paired, say $p_i$ and $p_j$.   That is to say, there is an arc  $l \in \LL$ , contained in the surface-with-boundary $S$ that has  endpoints $p_i$ and $p_j$ on $\gamma$. Then consider a rectangular neighborhood $R_l \subset S$ of the arc $l$ , where the top and bottom edges are segments of $\gamma$.  We can consider the arcs of $\mathcal{G}_i^\prime$ and $ \mathcal{G}_j^\prime$ as incident on these top and bottom edges. By property (i) of the minimal matching construction in the first stage, the total weight of the arcs in $\mathcal{G}_i^\prime$ is the same as that of the half-arc $l^+ \subset l$  that was incident on $p_i$,  that is, equals the weight of $l$. The same is true for the  the total weight of the arcs in $\mathcal{G}_j^\prime$, since that equals the weight of the other half-arc of $l$.
Hence, the total weights of the arcs incident on the top and bottom edges of $R_l$ are equal, and Lemma \ref{match} can be applied. 

The arcs in $R_l$ of the resulting minimal matching determines a splitting of the arc $l$, for each $l\in \LL$. 
Moreover, it determines a splitting of the arcs in $\mathcal{G}_i^\prime$  for each $i \in \{1,2,\ldots, N\}$ that completes the splitting of arcs in $\mathcal{G}$.\\

\begin{figure}
  \centering
  \includegraphics[scale=0.35]{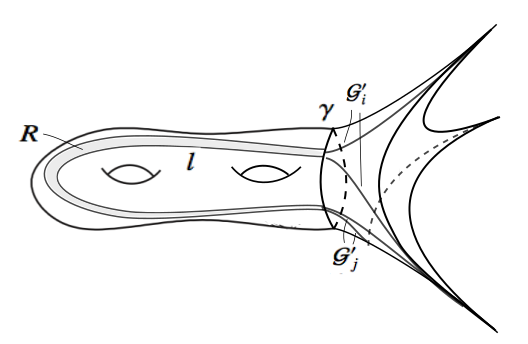}\\
  \caption{In the second stage, Lemma \ref{match} is used to determine a splitting of $l$, for each $l\in \LL$.}
\end{figure}

At the end of this second stage, we obtain a collection of weighted arcs $\hat{\LL}$ on the crowned surface $\hat{S} = S\cup_\gamma \CC$ between the boundary cusps of the crown.  It is easy to see that our construction ensures properties (a), (b) and (c) above. Moreover, we can verify that no two arcs, say $\hat{l}_1, \hat{l}_2$  in $\hat{\LL}$ are homotopic: indeed if they are,  their restriction to $S$ are homotopic, that is, they are splittings of the same $l \in \LL$.   However, recall that this splitting of $l$ is defined in  the second stage, where we determine a minimal matching of the arcs of  $\mathcal{G}_i^\prime$ and $ \mathcal{G}_j^\prime$ where $p_i$ and $p_j$ are the endpoints of $l$.  By property (ii) of a minimal matching, a pair of arcs obtained by splitting $l$ must connect to distinct splittings of arcs  in $\mathcal{G}_i^\prime$  or in $ \mathcal{G}_j^\prime$ (or both). However, we noted at the end of the first stage that distinct arcs in $\mathcal{G}_i^\prime$ or  $ \mathcal{G}_j^\prime$ are asymptotic to distinct cusps. Hence $\hat{l}_1$ and $\hat{l}_2$ are asymptotic to distinct cusps at one end (at least) which contradicts the assumption that they are homotopic. \\

We can now homotope each arc in  $\hat{\LL}$ to its geodesic representative,  and the fact that the homotopy classes are pairwise distinct ensures we obtain a set of weighted geodesic arcs on $\hat{S}$  of the same cardinality as $\hat{\LL}$. 
Together with the measured geodesic lamination $\lambda$ on $S$,  they determine a measured  geodesic lamination $\hat{\lambda}$ on $\hat{S}$. 

It only remains to show the uniqueness of such a measured lamination. This reduces to the uniqueness of the minimal matchings in the first and second stages, as follows:

Let $\hat{\lambda}_0$ be measured lamination on $\hat{S}$ that restricts to measured laminations on $S$ and $\mathcal{C}$ determined by the same data (i.e.\ the parameters described in the proof of Theorem \ref{mln}) as that of $\hat{\lambda}$.
Let $\hat{\LL}_0$ be the part of the measured lamination that is not compactly supported, comprising weighted arcs exiting the boundary cusps of the crown. Let ${\lambda}_0$ be the compactly supported part. 

The intersection of $\hat{\LL}_0$ with $S$ determines a collection of weighted arcs ${\LL}^\prime$ with endpoints on the boundary $\gamma$. Since the measured lamination ${\LL}^\prime \cup {\lambda}_0$ is, up to isotopy,  equal to ${\LL} \cup {\lambda}$,  we conclude that ${\lambda}_0 = \lambda$, and the arcs of $\LL^\prime$  must constitute a splitting of the arcs of $\LL$. Similarly, the  intersection of $\hat{\LL}_0$ with $\CC$ determines a collection of weighted arcs $\mathcal{G}^\prime_0$, that is a splitting of the arcs of $\mathcal{G}$.

Indeed, we shall now verify that $\hat{\LL}_0$ is obtained by the splitting-and-matching of the arcs $\mathcal{G}$ and $\mathcal{L}$ exactly as in the two-stage construction above. 

Consider the arcs $\hat{\LL}^\prime_l$ of $\hat{\LL}^\prime$ that correspond to a splitting of $l \in \LL$. Then the endpoints on $\gamma$ determine two collection of points $I_l^+$ and $I^-_l$, and we denote the corresponding collections of arcs of $\mathcal{G}^\prime_0$ incident on these point-sets by  $\mathcal{G}^\prime_{0,l,+}$ and  $\mathcal{G}^\prime_{0,l,-}$ respectively.  Here  $\mathcal{G}^\prime_{0,l,\pm}$ are splittings of a smaller  pair of arc-sets that we denote by $\mathcal{G}^\prime_{l,\pm}$, obtained by ``combining" arcs that are asymptotic to the same boundary cusp to a single arc (with a weight equal to the total weight of the combined arcs).  Note that this ensures that each $\mathcal{G}^\prime_{l,+}$ and $\mathcal{G}^\prime_{l,-}$ comprises arcs that are asymptotic to \textit{distinct} boundary cusps. 

 Since the original arcs $\hat{\LL}_0$ are pairwise homotopically distinct, a pair of arcs in the splitting of $l$ cannot connect to arcs of $\mathcal{G}^\prime_{0,l,+}$ and $\mathcal{G}^\prime_{0,l,-}$ that arise in the same splitting (of a pair of arcs in $\mathcal{G}$), at both of its ends. Thus, the collection of arcs $\hat{\LL}^\prime_l$ corresponding to a splitting of $l$ is a minimal matching of $\mathcal{G}_{l,\pm}^\prime$, exactly as in the second stage above, which is unique by Lemma \ref{match}. 
 
Finally we need to verify that the collections $\mathcal{G}^\prime_{l,\pm}$ as $l$ varies over $\LL$, are obtained by a minimal splitting exactly as in the first stage of the construction above. The arcs of $\mathcal{G}^\prime_{l,\pm} \subset \mathcal{G}^\prime_0$ are splittings of the arcs of $\mathcal{G}$, and the arcs connect to splittings of half-arcs of $\LL$. Hence these do constitute a matching, satisfying property (i) of Lemma \ref{match}. Property (ii) also holds, since by construction, the arcs of $\mathcal{G}^\prime_{l,\pm}$ are asymptotic to distinct cusps. Thus $\bigcup\limits_{l\in \LL}\mathcal{G}^\prime_{l,\pm}$ forms a minimal matching of $\mathcal{G}$ and half-arcs of $\LL$, which is unique by Lemma \ref{match}. 
This concludes the proof of uniqueness, and thus the proof of the Proposition.
 \end{proof}

\bibliographystyle{amsalpha}
\bibliography{mrefs}

\end{document}